\documentclass[reqno]{amsart}
\usepackage[T1]{fontenc}
\usepackage[utf8]{inputenc}
\usepackage[dvipsnames]{xcolor}
\usepackage[hidelinks]{hyperref}
\usepackage[maxbibnames=99, style=alphabetic]{biblatex}
\usepackage{tikz}\usetikzlibrary{cd}
\usepackage{amsmath,amsthm,amssymb,bbm}
\usepackage{mathtools}
\usepackage{lmodern}
\usepackage{etex}
\usepackage{enumitem}
\usepackage{adjustbox}
\usepackage{ifthen}
\usepackage{mathrsfs}
\usepackage{libertine}
\usepackage{libertinust1math}
\usepackage{amsthm}
\usepackage{csquotes}
\definecolor{NARANJA}{rgb}{1,0.467,0}
\definecolor{AZUL}{rgb}{0,0.53,1}
\hypersetup{
    colorlinks=true,
    citecolor=AZUL,
    linkcolor=NARANJA,
    urlcolor=AZUL,
}

\newtheoremstyle{dotlessB}{}{}{}{}{\bfseries}{}{ }{}
\theoremstyle{dotlessB}
\newtheorem{Dfn}{Definition}[section]
\newtheorem{Exp}[Dfn]{Example}
\newtheorem{Rmk}[Dfn]{Remark}
\newtheorem{Lmm}[Dfn]{Lemma}
\newtheorem{Prp}[Dfn]{Proposition}
\newtheorem{Prp-Dfn}[Dfn]{Proposition-Definition}
\newtheorem{Thm}[Dfn]{Theorem}
\newtheorem{Cor}[Dfn]{Corollary}

\newtheorem{Cdt}[Dfn]{Condition}

\newtheorem*{Cvt}{Conventions}
\newcommand{\DfnLbl}{(\alph*)}
\newcommand{\ExpLbl}{(\arabic*)}
\newcommand{\RmkLbl}{(\arabic*)}
\newcommand{\LmmLbl}{(\alph*)}
\newcommand{\PrpLbl}{(\alph*)}
\newcommand{\ThmLbl}{(\alph*)}

\newtheorem{lThm}{Theorem}           
 
\newcommand{\FDA}{\Lambda}           
\newcommand{\ACF}{\mathbbm{k}}       
\newcommand{\AUR}{R}                 
\newcommand{\KbI}[1]{\homotopyCat^b(\text{Inj}#1)}
\newcommand{\Kzn}[1]{\homotopyCat^{[0,n]}(\text{Inj}#1)}

\newcommand{\subcat}{\mathcal{S}}
\newcommand{\homotopyCat}{\mathcal{K}}
\newcommand{\derivedCat}{\text{D}}
\newcommand{\extriangulatedCat}{\mathscr{C}}
\newcommand{\extFunc}{\mathbb{E}}
\newcommand{\additiveReal}{\mathfrak{s}}
\newcommand{\extriangulatedTriple}{(\extriangulatedCat,\extFunc,\additiveReal)}
\newcommand{\torsionClass}{\mathcal{T}}
\newcommand{\torsionfreeClass}{\mathcal{F}}
\newcommand{\torsionPair}{(\torsionClass,\torsionfreeClass)}
\newcommand{\cotorsionClass}{\mathcal{X}}
\newcommand{\cotorsionfreeClass}{\mathcal{Y}}
\newcommand{\cotorsionPair}{(\cotorsionClass,\cotorsionfreeClass)}
\newcommand{\cotorsionKernel}{\cotorsionClass\cap\cotorsionfreeClass}
\newcommand\syz[2][1]{\ifthenelse{\equal{#1}{1}}{\Omega^1(#2)}{\Omega^{#1}(#2)}}
\newcommand\cosyz[2][1]{\ifthenelse{\equal{#1}{1}}{\Sigma^1(#2)}{\Sigma^{#1}(#2)}}
\newcommand{\triangulatedCat}{\mathscr{T}}
\newcommand\shiftFunc[2][1]{\ifthenelse{\equal{#1}{1}}{#2[1]}{#2[#1]}} 
\newcommand{\triangleClass}{\Delta} 
\newcommand{\triangulatedTriple}{(\triangulatedCat,\shiftFunc{},\triangleClass)}

\title{Large silting mutation in extriangulated categories}
\author{Diego Alberto Barcel\'o Nieves}
\address{Universit\`a degli Studi di Verona, Strada le Grazie 15, 37134 Verona, Italia}
\email{\href{mailto:diegoalberto.barcelonieves@univr.it}{diegoalberto.barcelonieves@univr.it}}
\date{}

\begin{document}

\begin{abstract}
    Silting mutation in triangulated categories\textemdash both at the level of objects and of subcategories\textemdash was introduced in \cite[]{aihara2012silting}, and later generalized to extriangulated categories in \cite[]{adachi2025assortment}. It simultaneously encompasses the mutation theories of cluster-tilting objects in cluster theory and of compact 2-term silting complexes and support $\tau$-tilting modules in $\tau$-tilting theory. In this article, we develop an infinite-dimensional analog of silting mutation in extriangulated categories with set-indexed (co)products, which we then apply to obtain a theory of mutation for $n$-cosilting complexes over an arbitrary ring, as well as for infinite-dimensional $n$-(co)tilting modules over a ring of finite global dimension. The former theory is also shown to reinterpret the cosilting mutation introduced in \cite[]{hügel2025mutation}.
    \noindent \textbf{Keywords.} Infinite-dimensional mutation, silting subcategories, extriangulated categories, hereditary complete cotorsion pairs, bounded cosilting complexes
\end{abstract}

\maketitle
\tableofcontents

\section{Introduction}\label{sec: introduction}

In Homological Algebra, a torsion pair allows one to decompose a category into a pair of semi-orthogonal subcategories\textemdash in the sense of there being no non-zero morphisms from one of them to the other\textemdash which preserve much of its homological information via conflations, since the whole category can be reconstructed by taking extensions between them. Each torsion pair provides a unique decomposition, so being able to control them yields a powerful tool for the study of the category as a whole. In \cite[]{adachi2014tau} (see also \cite[]{derksen2015general}) the authors proved that, for a finite dimensional algebra $\FDA$ over an algebraically closed field, the class of \emph{functorially finite} torsion pairs in $\text{mod}(\FDA)$ is in one-to-one correspondence with (compact) 2-term silting complexes over $\FDA$, which have an associated operation of mutation\textemdash i.e., a procedure by which a (direct) summand of a 2-term silting complex is replaced with an object in such a way that the resulting complex is also 2-term silting. The present work is part of an ongoing collective effort to extend the mutation of functorially finite torsion pairs to a more extensive class of torsion pairs. The motivation is that, for any ring $\AUR$, a similar one-to-one correspondence between arbitrary torsion pairs in $\text{mod}(\AUR)$ and 2-term cosilting complexes over $\AUR$ follows from \cite[Lemma~4.4]{crawley1994locally}, \cite[Corollary~3.5~\&~Corollary~4.8]{breaz2017cosilting} and \cite[Corollary~3.9]{hügel2018abundance} (see also \cite[Theorem~4.16~\&~Theorem~4.18]{zhang2017cosilting}), which begs the question: \emph{How can we mutate the latter?}

In recent work, two approaches to this question have been introduced, which we briefly describe (see also the survey \cite[]{hügel2026torsion}). In \cite[]{hügel2024torsion}\cite[]{hügel2026mutation}, a one-to-one correspondence between 2-term cosilting complexes and maximally rigid sets of the Ziegler spectrum is established, an operation of mutation for such sets is defined, and mutability criteria are given in terms of topological properties of this spectrum. In \cite[]{hügel2025mutation}, an operation of \emph{cosilting mutation} is defined for (the much more general) cosilting objects in triangulated categories, each of which gives rise to a $t$-structure and a hereditary torsion pair of its heart. In the case of pure-injective cosilting objects in compactly generated triangulated categories (which includes 2-term cosilting complexes), mutability criteria are given in terms of closure properties of the associated hereditary torsion pair, as well as the existence of certain approximations in the triangulated category; crucially, however, this does not ensure that the mutation of a 2-term cosilting complex is once again 2-term\textemdash and thus, corresponds to a torsion pair. We also note that, even though cosilting mutation is a vast generalization of compact silting mutation, the latter may be understood at the level of silting subcategories in triangulated categories (see \cite[]{aihara2012silting}), whereas no similar description is currently known for the former.

In this work, we introduce a different framework for the mutation of 2-term cosilting complexes by generalizing the mutation of silting subcategories\textemdash which typically have only been considered to be closed under summands and finite direct sums\textemdash to silting subcategories which are moreover closed under set-indexed products. This is inspired by a characterization of 2-term cosilting complexes as those whose closure under summands and set-indexed products is a silting subcategory of the homotopy category of 2-term complexes of injective modules (Proposition~\ref{prp: n-cosilting} for $n=1$). Remarkably, this category is not triangulated due to not being closed under shifts; however, it is extriangulated by virtue of being closed under extensions in the derived category. This forces us to develop our theory of mutation for product-closed silting subcategories\textemdash which we call \emph{large silting mutation}\textemdash in the general context of extriangulated categories, following \cite[\S~4.2]{adachi2025assortment}. 

Let us briefly describe this operation. Given product-closed additive subcategories $\mathcal{D}$ and $\subcat$ of some extriangulated category such that $\subcat$ is presilting, $\mathcal{D}\subseteq\subcat$, and each object in $\subcat\setminus\mathcal{D}$ has a right $\mathcal{D}$-approximation for which a cocone exists\textemdash in which case we call $\mathcal{D}$ a \emph{good contravariantly finite subcategory} of $\subcat$ (Definition~\ref{dfn: approximations})\textemdash, we define $\prod\nolimits^R(\subcat;\mathcal{D})$ by substituting the aforementioned objects in $\subcat$ with the cocones of their right $\mathcal{D}$-approximations and closing under products and summands (Definition~\ref{dfn: product-mutation}). Dually, we define a \emph{good covariantly finite subcategory} $\mathcal{D}$ of $\subcat$, and subsequently $\prod\nolimits^L(\subcat;\mathcal{D})$. Our first main result establishes the general conditions under which this operation satisfies the properties expected from silting mutation, and provides an extra condition under which it can be linked to the mutation of certain objects in the category, following \cite[\S 3]{hügel2025mutation}.

\begin{lThm}[Theorem~\ref{thm: mutation}~\&~Proposition~\ref{prp: producer mutation}]\label{thm: A}
    Let $\extriangulatedTriple$ be an extriangulated category with positive extensions and exact products. 
    \begin{enumerate}[label=\ThmLbl]
        \item Let $\subcat$ be a presilting subcategory of $\extriangulatedCat$ and $\mathcal{D}$ be a good contravariantly finite subcategory of $\subcat$ such that $\text{Prod}(\subcat)=\subcat$ and $\text{Prod}(\mathcal{D})=\mathcal{D}$. Then the following statements hold.
            \begin{itemize}[label=\ThmLbl]
                \item[(a1)] $\prod\nolimits^R(\subcat;\mathcal{D})$ is a product-closed presilting subcategory of $\extriangulatedCat$.
                \item[(a2)] $\prod\nolimits^R(\subcat;\mathcal{D})\geqslant\subcat$, where the equality holds if, and only if, $\subcat=\mathcal{D}$.
                \item[(a3)] If $\subcat$ is a silting subcategory of $\extriangulatedCat$, then so is $\prod\nolimits^R(\subcat;\mathcal{D})$.
                \item[(a4)] $\mathcal{D}$ is a good covariantly finite subcategory of $\prod\nolimits^R(\subcat;\mathcal{D})$ and
                    \[
                        \prod\nolimits^L\big(\prod\nolimits^R(\subcat;\mathcal{D});\mathcal{D}\big)=\subcat.
                    \] 
            \end{itemize}
            Moreover, if $\mathcal{D}$ is instead a good covariantly finite subcategory of $\subcat$, then the corresponding statements for $\prod\nolimits^L(\subcat;\mathcal{D})$ hold.

        \item Suppose that, for each $X\in\extriangulatedCat$, we have that $\text{Prod}(X^{\perp_{>0}})=X^{\perp_{>0}}$.
            Let $C,C'\in\extriangulatedCat$ be such that $\text{Prod}(C)$ and $\text{Prod}(C')$ are silting subcategories of $\extriangulatedCat$ and $\mathcal{D}=\text{Prod}(C)\cap\text{Prod}(C')$. Then the following conditions are equivalent.
            \begin{itemize}
                \item[(b1)] There exists an $\additiveReal$-conflation
                    \[
                        C\xrightarrow[]{f} B_0\to B_1
                    \] 
                    such that $f$ is a left $\mathcal{D}$-approximation of $C$ and $\text{Prod}(B_0\bigoplus B_1)=\text{Prod}(C')$.
                \item[(b2)] There exists an $\additiveReal$-conflation
                    \[
                        A_1\to A_0\xrightarrow[]{g} C'
                    \] 
                    such that $g$ is a right $\mathcal{D}$-approximation of $C'$ and $\text{Prod}(A_1\bigoplus A_0)=\text{Prod}(C)$.
                \item[(b3)] $\mathcal{D}$ is a good contravariantly finite subcategory of $\text{Prod}(C')$ and
                    \[
                        \prod\nolimits^R(\text{Prod}(C');\mathcal{D})=\text{Prod}(C).
                    \] 
                \item[(b4)] $\mathcal{D}$ is a good covariantly finite subcategory of $\text{Prod}(C)$ and
                    \[
                        \prod\nolimits^L(\text{Prod}(C);\mathcal{D})=\text{Prod}(C').
                    \] 
            \end{itemize}
    \end{enumerate}
\end{lThm}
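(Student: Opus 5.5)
Part (a) follows the template of \cite[\S~2.6]{aihara2012silting} and \cite[\S~4.2]{adachi2025assortment}, with two changes: finite sums are replaced by set-indexed products, and the finite-dimensional arguments are replaced by the exactness of products. For (a1), take $X\in\subcat\setminus\mathcal{D}$ with $\additiveReal$-conflation $Y_X\to D_X\xrightarrow{g_X}X$, where $g_X$ is a right $\mathcal{D}$-approximation. Apply $\extriangulatedCat(-,\subcat)$ and $\extriangulatedCat(\subcat,-)$ to this conflation and use positive extensions (so that the long exact sequences are available).

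The vanishing $\extFunc^{k}(Y_X,\subcat)=0$ for $k\geq 2$ and $\extFunc^{k}(\subcat,Y_X)=0$ for $k\geq1$ follow from presilting-ness of $\subcat$. The remaining group $\extFunc^1(Y_X,D)$ with $D\in\mathcal{D}$ vanishes because $\extriangulatedCat(g_X,D)$ is surjective, since $g_X$ is an approximation. The statement $\extFunc^{>0}(Y_X,Y_{X'})=0$ then follows from the same long sequences. Since the vanishing of $\extFunc^{>0}$ is preserved by products and summands (exact products give $\extFunc^k(-,\prod Z_i)\cong\prod\extFunc^k(-,Z_i)$, and in the first variable one uses $\text{Prod}$-closure of $\subcat$ together with the conflation being closed under products), $\prod\nolimits^R(\subcat;\mathcal{D})$ is presilting and product-closed by definition.

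For (a2), the inequality $\geqslant$ is the vanishing of $\extFunc^{>0}(\prod\nolimits^R(\subcat;\mathcal{D}),\subcat)$, which was already shown above. Equality forces each $Y_X$ to lie in $\subcat$, so each conflation splits (since $\extFunc^1(X,Y_X)=0$). Then $X$ is a summand of $D_X$, so $X\in\mathcal{D}$, which is a contradiction unless $\subcat=\mathcal{D}$.

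For (a4), the conflation $Y_X\to D_X\to X$ shows that $Y_X\to D_X$ is a left $\mathcal{D}$-approximation, because $\extFunc^1(X,\mathcal{D})=0$; it has cone $X$. Taking products of these approximations handles general objects of $\prod\nolimits^R(\subcat;\mathcal{D})\setminus\mathcal{D}$. For a summand, I compose with the split inclusion and check that a cone still exists, using that $\subcat$ is closed under summands. Then $\prod\nolimits^L$ recovers $\text{Prod}$ of the $X$'s together with $\mathcal{D}$, which is $\subcat$.

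The main obstacle is (a3), since generation in the silting sense must be established. I plan to show that $\subcat$ lies in the thick closure of $\prod\nolimits^R(\subcat;\mathcal{D})$ in the sense used in \cite{adachi2025assortment}. This holds because each $X$ is the cone of $Y_X\to D_X$, whose terms lie in the new subcategory. Thickness of the closure then gives the claim, and I expect this needs care only in matching the extriangulated definition of silting (for instance, via the characterization through $\subcat^{\perp_{>0}}$ and cones).

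For (b), I argue as follows. For (b2)$\Rightarrow$(b3), $g$ is a right $\mathcal{D}$-approximation with cocone $A_1$. Products of copies of $g$ give approximations of all of $\text{Prod}(C')$; summands are handled as in (a4). Hence $\mathcal{D}$ is good contravariantly finite, and $\prod\nolimits^R(\text{Prod}(C');\mathcal{D})$ contains $\text{Prod}(A_1)$, $\mathcal{D}$ and $A_0\in\mathcal{D}$. This last point needs justification: $A_0\in\text{Prod}(C')$ and $A_0\in\text{Prod}(C)$. The mutation is therefore contained in $\text{Prod}(C)$, and the reverse inclusion holds because it is generated by $A_0\oplus A_1$ together with $\mathcal{D}$.

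For (b3)$\Rightarrow$(b2), apply the good approximation to $C'$ itself. Its cocone $A_1$ together with $A_0$ generates $\text{Prod}(C)$ by the definition of $\prod\nolimits^R$. The hypothesis $\text{Prod}(X^{\perp_{>0}})=X^{\perp_{>0}}$ is used to ensure that the silting subcategories $\text{Prod}(C)$ are determined by their perpendiculars, so that the two candidate subcategories coincide. The equivalences (b1)$\Leftrightarrow$(b4) are dual, and (b3)$\Leftrightarrow$(b4) follows from (a4).
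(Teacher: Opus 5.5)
Part (a3) is fine and is exactly the paper's argument. Elsewhere there are genuine gaps.

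\textbf{Part (a).} In (a1) the two variables are swapped. From $Y_X\to D_X\xrightarrow{g_X}X$ one gets $\extFunc^k(Y_X,\subcat)=0$ for \emph{all} $k\ge1$, since it sits between $\extFunc^k(D_X,\subcat)$ and $\extFunc^{k+1}(X,\subcat)$. By contrast, $\extFunc^k(\subcat,Y_X)=0$ holds only for $k\ge2$. The group that needs the approximation property is $\extFunc^1(\mathcal{D},Y_X)$, and it vanishes by surjectivity of $\text{Hom}_{\extriangulatedCat}(D,g_X)$, not of $\text{Hom}_{\extriangulatedCat}(g_X,D)$. Indeed $\extFunc^1(X,Y_X)\cong\operatorname{coker}\text{Hom}_{\extriangulatedCat}(X,g_X)$ is nonzero for $X\notin\mathcal{D}$, since otherwise the conflation splits; that splitting is exactly what you use in (a2). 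With this corrected, $\extFunc^{>0}(Y_X,Y_{X'})=0$ and $\prod\nolimits^R(\subcat;\mathcal{D})\geqslant\subcat$ do follow on generators.

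A second problem is the claimed isomorphism $\extFunc^k(-,\prod_iZ_i)\cong\prod_i\extFunc^k(-,Z_i)$. For $k\ge2$ nothing in the hypotheses of (a) provides it: Remark~\ref{rmk: products}(1) gives only a monomorphism for $k=1$, and the higher analogue is the separate Condition~\ref{cdt: mono}. The paper avoids this in Lemma~\ref{lmm: product-mutation}. There, a product of generators is itself the cocone of the product conflation, whose deflation is again a right $\mathcal{D}$-approximation of an object of $\subcat$ (products of \emph{right} approximations are right approximations). So the dual of Lemma~\ref{lmm: good} applies to it directly, in both variables.

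Finally, in (a4) take a summand $N$ of $N'=N\oplus N''$. The cone $S_N$ of $N\to N'\to D_{N'}$ lies in a conflation $N''\to S_N\to S_{N'}$ and is in general not a summand of $S_{N'}$. So closure of $\subcat$ under summands does not by itself give $S_N\in\subcat$. What works is the comparison argument of Proposition~\ref{prp: left invariance}, applied to the two inflations $N\oplus N''\to D_{N'}\oplus D_{N'}$ and $N'\to D_{N'}$. Both are left $\mathcal{D}$-approximations of $N'$ with cones in ${}^{\perp_1}\mathcal{D}$, and the comparison gives $S_N\oplus S_{N''}\oplus D_{N'}\cong S_{N'}\oplus D_{N'}\oplus D_{N'}\in\subcat$. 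The same comparison is what makes ``$\prod\nolimits^L$ recovers $\subcat$'' independent of the chosen approximations. (At this point of Lemma~\ref{lmm: product-mutation}, the paper justifies the splitting by $N''\in\text{Prod}(\mathcal{D})$, which need not hold, so this comparison is needed there as well.)

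\textbf{Part (b).} The main gap is here: both directions of (b2)$\Leftrightarrow$(b3) rest on an inclusion that you assert but do not prove.
In (b3)$\Rightarrow$(b2), the definition of $\prod\nolimits^R$ yields only $\text{Prod}(A_1\oplus A_0)\subseteq\prod\nolimits^R(\text{Prod}(C');\mathcal{D})=\text{Prod}(C)$. This is because the mutation is generated by $\mathcal{D}$ and the cocones $N_S$ for \emph{all} $S\in\text{Prod}(C')$, not only $S=C'$.
In (b2)$\Rightarrow$(b3), the claim ``it is generated by $A_0\oplus A_1$ together with $\mathcal{D}$'' is the same unproved statement. For a summand $S$ of $(C')^J$ with complement $M$, the cocone of $A_0^J\to(C')^J\to S$ is an extension $A_1^J\to E\to M$, not visibly in $\text{Prod}(C)$.

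The paper closes both directions with the maximality of silting subcategories (Lemma~\ref{lmm: maximal}):
\begin{itemize}
\item $\text{Prod}(A_1\oplus A_0)$ is thick-generating: exact products make $(C')^J$ the cone of $A_1^J\to A_0^J$, so $\text{thick}(\text{Prod}(A_1\oplus A_0))\supseteq\text{thick}(\text{Prod}(C'))=\extriangulatedCat$.
\item It is presilting by Lemma~\ref{lmm: mutation}; this is where the hypothesis $\text{Prod}(X^{\perp_{>0}})=X^{\perp_{>0}}$ is used. Hence it is silting.
\item $\prod\nolimits^R(\text{Prod}(C');\mathcal{D})$ is presilting by (a1), and a silting subcategory contained in a presilting one equals it.
\end{itemize}
Your appeal to silting subcategories being ``determined by their perpendiculars'' supplies neither inclusion unless you first show that $\text{Prod}(A_1\oplus A_0)$ is silting, which is precisely this argument. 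One can also avoid maximality by comparing right $\mathcal{D}$-approximations as in the dual of Proposition~\ref{prp: left invariance}. For example, for $D\in\mathcal{D}$, the conflation $E\to A_0^J\to D$ obtained from $D$ being a summand of $(C')^J$ splits, so $\mathcal{D}\subseteq\text{Prod}(A_0)$. Either way, some such argument must be given.

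With this in place, your decomposition is a fine substitute for the paper's appeal to \cite[Corollary~3.7]{hügel2025mutation} for (b1)$\Leftrightarrow$(b2). That decomposition is (b1)$\Leftrightarrow$(b4) as the left analogue of (b2)$\Leftrightarrow$(b3), together with (b3)$\Leftrightarrow$(b4) from (a4). Just note that there $f^J$ is a left $\mathcal{D}$-approximation because its cone $B_1^J$ lies in $\text{Prod}(C')\subseteq{}^{\perp_1}\mathcal{D}$, not by duality. Products of left approximations are not left approximations in general.
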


In order to exploit Theorem~\ref{thm: A} to the fullest, we give additional conditions under which each product-closed silting subcategory $\subcat$ is in one-to-one correspondence with an equivalence class of objects $C$ such that $\subcat=\text{Prod}(C)$, which we call its \emph{producers}.

\begin{lThm}[Proposition~\ref{prp: bijections},~Lemma~\ref{lmm: equivalence}~\&~Lemma~\ref{lmm: bounded}]\label{thm: B}
    Let $\extriangulatedTriple$ be an extriangulated category with exact products, an $\extFunc$-injective cogenerator $Q$, and all objects of finite $\extFunc$-injective dimension. Then there exist mutually inverse bijections
    \begin{center}
        \begin{tikzcd}
            \big\{ \text{objects } C \text{ in } \extriangulatedCat \mid \text{Prod}(C) \text{ is a silting subcategory of } \extriangulatedCat \big\}_{/\sim} \arrow[shift right]{d}[]{}  \arrow[phantom, very near start, xshift=-10mm, yshift=1.25mm]{d}[]{\scriptstyle [C]} \arrow[mapsto, shorten=1.5mm, xshift=-10mm, yshift=0mm]{d}[]{} \\
            \big\{ \text{silting subcategories } \subcat \text{ of } \extriangulatedCat \mid \text{Prod}(\subcat)=\subcat \big\} \arrow[shift right]{d}[swap]{\Psi} \arrow[shift right]{u}[swap]{\Theta} \arrow[phantom, very near start, xshift=-10mm, yshift=-1mm]{u}[]{\scriptstyle \text{Prod}(C)} \\
            \big\{ \text{ hereditary complete cotorsion pairs } \cotorsionPair \text{ in } \extriangulatedCat \mid \text{Prod}(\cotorsionClass)=\cotorsionClass \text{ and } \cotorsionClass^\land=\extriangulatedCat \big\} \arrow[shift right]{u}[swap]{\Phi},
        \end{tikzcd}
    \end{center}
    where $\Psi(\subcat):=(\subcat^\lor,\subcat^\land)$, $\Phi\cotorsionPair:=\cotorsionKernel$, and a representative of the equivalence class $\Theta(\subcat)$ is given by considering any finite $\subcat$-resolution of $Q$ and taking the direct sum over the extensions which appear in the resolution.
\end{lThm}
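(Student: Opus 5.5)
The plan is to prove the two bijections separately: first $\Psi$ and $\Phi$ between product-closed silting subcategories and the indicated cotorsion pairs, then $\Theta$ against $S\mapsto$ its producers. Throughout, I will use the extriangulated analogue of the Aihara--Iyama/Adachi--Tsukamoto correspondence between silting subcategories and bounded hereditary cotorsion pairs.

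\textbf{Step 1: $\Psi$ is well defined.} For a silting subcategory $\subcat$ with $\text{Prod}(\subcat)=\subcat$, I will check that $(\subcat^\lor,\subcat^\land)$ is a hereditary complete cotorsion pair. Here $\subcat^\lor$ and $\subcat^\land$ are the objects with finite $\subcat$-coresolutions and finite $\subcat$-resolutions, respectively.
\begin{itemize}
\item Orthogonality $\extFunc^{>0}(\subcat^\lor,\subcat^\land)=0$ follows from the presilting property by dimension shifting along the finite (co)resolutions, using positive extensions.
\item Completeness follows from the standard inductive construction on resolution length, splicing conflations and using that $\subcat$ is closed under extensions inside $\subcat^\lor\cap\subcat^\land$.
\item Heredity follows from the two-out-of-three closure of $\subcat^\lor$ under cocones of deflations.
\end{itemize}
Silting gives $\text{thick}(\subcat)=\extriangulatedCat$. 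Combined with the finite $\extFunc$-injective dimension hypothesis, I expect this to upgrade to $\subcat^\land=\extriangulatedCat$; this is the \enquote{bounded} part, Lemma~\ref{lmm: bounded}. Concretely, $Q$ has a finite $\subcat$-resolution because $\subcat$ is silting. Since every object has a finite $\extFunc$-injective coresolution by products of $Q$, and products are exact, every object is then $\subcat$-resolved. Finally, $\text{Prod}(\subcat^\lor)=\subcat^\lor$ uses exact products together with $\text{Prod}(\subcat)=\subcat$, applied termwise to coresolutions of bounded length. Here a uniform length bound is available, since $\subcat^\lor$ is a left orthogonal, hence product closed by the orthogonal description.

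\textbf{Step 2: $\Phi$ is well defined and $\Phi\Psi$, $\Psi\Phi$ are identities.} Given $\cotorsionPair$ as in the statement, I will verify three things for the kernel $\cotorsionKernel$:
\begin{itemize}
\item it is presilting, by heredity;
\item it is product closed, since both $\cotorsionClass$ and $\cotorsionClass^{\perp}=\cotorsionfreeClass$ are;
\item it is silting, by resolving each object through approximation conflations to land in the thick closure, using $\cotorsionClass^\land=\extriangulatedCat$.
\end{itemize}
The identities $\cotorsionClass=(\cotorsionKernel)^\lor$ and $\cotorsionfreeClass=(\cotorsionKernel)^\land$ are then the usual arguments from the triangulated/extriangulated silting literature, which I will adapt verbatim.

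\textbf{Step 3: producers.} Take a finite $\subcat$-resolution of $Q$,
\[
0\to S_m\to\cdots\to S_0\to Q,
\]
realised by conflations, and set $C:=\bigoplus S_i$. I claim $\text{Prod}(C)=\subcat$. The inclusion $\supseteq$ needs every $S\in\subcat$ to be a summand of a product of copies of $C$. I would prove this by coresolving $S$ by products of $Q$ (finite injective dimension), then replacing each $Q^I$ by the product of the resolution of $Q$ (exact products). Because $\extFunc^{>0}(S,\subcat)=0$, the resulting mapping-cone-type argument in the extriangulated setting splits $S$ off a product of $S_i$'s. The equivalence relation $\sim$ is $\text{Prod}(C)=\text{Prod}(C')$, so $\Theta$ is independent of choices and is inverse to $[C]\mapsto\text{Prod}(C)$ by construction (Lemma~\ref{lmm: equivalence}).

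\textbf{Main obstacle.} I expect Step 3, the splitting argument showing every object of $\subcat$ lies in $\text{Prod}(C)$, to be the hardest. In the extriangulated setting there are no cones, so the dimension-shift must be done by carefully composing $\additiveReal$-conflations and using $\extFunc^{>0}$-vanishing at each stage. The first thing I would try is induction on the $\extFunc$-injective dimension of $S$.
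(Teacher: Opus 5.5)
\textbf{Verdict.} The proposal has genuine gaps. The decisive step of the producer bijection is never proved, and the cotorsion-pair part contains a false claim and a misplaced step.

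\textbf{The missing idea in Step~3.} Your Step~3 leaves the inclusion $\subcat\subseteq\text{Prod}(C)$ unproved. The \enquote{mapping-cone-type argument} that should split each $S\in\subcat$ off a product of the $S_i$ is never specified, and you flag it yourself as the obstacle. What is missing is the maximality of silting subcategories (Lemma~\ref{lmm: maximal}); with it no explicit splitting is needed, and this is the paper's argument in Lemma~\ref{lmm: producer}. The ingredients you list already show that $\text{Prod}(C)$ is silting:
\begin{itemize}
\item $Q\in\text{Prod}(C)^\land$ by construction, so taking products of that finite resolution (exact products) gives $\text{Prod}(Q)\subseteq\text{thick}(\text{Prod}(C))$.
\item Hence $\extriangulatedCat=\text{Inj}_\extFunc(\extriangulatedCat)^\lor=\text{Prod}(Q)^\lor\subseteq\text{thick}(\text{Prod}(C))$, by Remark~\ref{rmk: products}(2) and finite $\extFunc$-injective dimension.
\item $\text{Prod}(C)\subseteq\text{Prod}(\subcat)=\subcat$, so $\text{Prod}(C)$ is presilting.
\end{itemize}
Then $\text{Prod}(C)\subseteq\subcat\subseteq{}^{\perp_{>0}}\subcat$ forces $\text{Prod}(C)=\subcat$. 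Unwound, the \enquote{splitting} is just this: each $S\in\subcat$ lies in ${}^{\perp_{>0}}\text{Prod}(C)\cap\text{Prod}(C)^{\perp_{>0}}$. Dimension shifting along finite resolutions places this intersection inside $\text{Prod}(C)^\lor\cap\text{Prod}(C)^\land$, which equals $\text{Prod}(C)$ by Theorem~\ref{thm: AT}. No induction on injective dimension is needed.

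\textbf{The false boundedness claim in Step~1.} The claim $\subcat^\land=\extriangulatedCat$ is false. $\subcat^\land$ is the second class of the cotorsion pair $\Psi(\subcat)$, so this would force $\subcat^\lor={}^{\perp_1}\extriangulatedCat=\text{Proj}_\extFunc(\extriangulatedCat)$. It fails, for instance, for the silting subcategory $\text{Prod}(\omega)$ of Example~\ref{exp: cosilting}. That subcategory consists of the $\extFunc$-injectives, so $\text{Prod}(\omega)^\land=\text{Prod}(\omega)\neq\text{Prod}(\omega)^{[-m,0]}$.

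Your argument breaks because $\subcat^\land$ is closed under cones, not cocones. Coresolving an object by products of $Q\in\subcat^\land$ therefore only places it in $(\subcat^\land)^\lor$. What $\Psi$ actually needs is $(\subcat^\lor)^\land=\extriangulatedCat$, and that is already part of Theorem~\ref{thm: AT}.

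\textbf{Where that argument belongs: Step~2.} A pair in the bottom set is only assumed to satisfy $\cotorsionClass^\land=\extriangulatedCat$. The \enquote{usual arguments} you invoke, giving $\cotorsionClass=(\cotorsionKernel)^\lor$, $\cotorsionfreeClass=(\cotorsionKernel)^\land$ and $\text{thick}(\cotorsionKernel)=\extriangulatedCat$, require the pair to be bounded. So you also need $\cotorsionfreeClass^\lor=\extriangulatedCat$. This follows from $\text{Inj}_\extFunc(\extriangulatedCat)\subseteq\cotorsionfreeClass$ together with finite injective dimension, which is exactly Lemma~\ref{lmm: bounded}. Your coresolution argument, correctly concluded, is precisely this step.

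\textbf{The uniform length bound.} Your justification of a uniform bound for $\text{Prod}(\subcat^\lor)=\subcat^\lor$ is invalid. It is right orthogonals that are product-closed (Remark~\ref{rmk: products}(1)), not left ones. Moreover, product-closedness of $\subcat^\lor={}^{\perp_1}(\subcat^\land)$ is exactly what that step is meant to prove, so the reasoning is circular.
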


We then give a canonical construction of an extriangulated category satisfying all of the hypotheses from Theorem~\ref{thm: A} and Theorem~\ref{thm: B} (Corollary~\ref{cor: construction}), which can be readily obtained from any cosilting object in a triangulated category with products. Important examples of this construction include, for any $n\in\mathbb{Z}_{>0}$, the homotopy category of $(n+1)$-term complexes of injective modules over a ring $\AUR$\textemdash whose producers coincide with $(n+1)$-term cosilting complexes over $\AUR$ (Proposition~\ref{prp: n-cosilting})\textemdash as well as their analogs over a connective dg algebra (Remark~\ref{rmk: dga}), thus establishing a connection with the deformation theory of triangulated categories introduced in \cite[]{genovese2021t-structures}. In particular, the $n=1$ case provides mutability criteria for 2-term cosilting complexes over $R$ in terms of the existence of certain approximations in the aforementioned homotopy category, both at the level of product-closed silting subcategories and of their producers. These results also reveal that cosilting mutation between bounded cosilting complexes may be reinterpreted as an infinite-dimensional analog of silting mutation between associated product-closed silting subcategories in an appropriate extriangulated category. 

As a final example, Theorem~\ref{thm: B} is shown to also be applicable to $\text{Mod}(\AUR)$ for $\AUR$ a ring of finite global dimension $n$, in which case the producers of the product-closed silting subcategories of $\text{Mod}(\AUR)$ coincide with $n$-cotilting modules over $\AUR$; hence, its $n$-cotilting modules can be mutated directly in the module category via large silting mutation. We remark that analogs of Theorem~\ref{thm: A} and \ref{thm: B} for coproduct-closed silting subcategories $\subcat$ and their \emph{coproducers} (i.e., objects $C$ such that $\subcat=\text{Add}(C)$) are valid for extriangulated categories satisfying analogous conditions for set-indexed coproducts instead of products.

The document is organized as follows. After covering the necessary preliminaries in Section~\ref{sec: preliminaries}, we develop the theory of mutation for product-closed silting subcategories in extriangulated categories in Section~\ref{ssec: product-closed}. We then establish Theorem~\ref{thm: B} in Section~\ref{ssec: producers} and show how, under an additional hypothesis, the aforementioned theory can be used to mutate producers of product-closed silting subcategories. Section~\ref{ssec: special} focuses on some special cases in which Theorem~\ref{thm: B} can be given a more detailed description, whereas Section~\ref{ssec: construction} presents a canonical construction of an example satisfying all of the hypotheses considered throughout Section~\ref{sec: large}. Finally, the examples of $n$-cosilting complexes over a ring and (infinitely-generated) $n$-(co)tilting modules over a ring of finite global dimension $n$ are treated in Section~\ref{ssec: n-cosilting} and Section~\ref{ssec: n-(co)tilting}, respectively. 

\section{Preliminaries on extriangulated categories}\label{sec: preliminaries}

\hypertarget{Cvt}{
    \begin{Cvt}
        Throughout, we assume that all subcategories are full and closed under isomorphisms in their ambient category. We thus make no distinction between a given subcategory and its corresponding class of objects. Moreover, we assume that any subcategory of an additive category is itself additive and that all of its finite direct sums are also direct sums in their ambient category, i.e., that it is an additive subcategory. Furthermore, we assume that all (co)products and families of objects are indexed by sets. We often denote singletons by their unique element and use the term ``large'' to refer to subcategories which contain infinite products or coproducts, depending on the context, or to make reference to modules which are not finitely generated. 
    \end{Cvt}
}

Recall that extriangulated categories are a class of additive categories with additional structure which simultaneously generalize both exact and triangulated categories, and that the extriangulated generalization of both kernel-cokernel pairs (hence, short exact sequences) and distinguished triangles are called \emph{conflations}. For details, we refer to \cite[]{nakaoka2019extriangulated}.

\subsection{Terminology and cotorsion pairs}\label{ssec: terminology}

From now on, let $\extriangulatedTriple$ be an extriangulated category. This setting is suitable for considering complete cotorsion pairs.

\begin{Dfn}\cite[Definition~2.15~\&~Definition~3.9]{nakaoka2019extriangulated}\label{dfn: terminology}
    Let $A\xrightarrow[]{f} B\xrightarrow[]{g} C$ be an $\additiveReal$-conflation.
    \begin{enumerate}[label=\DfnLbl]
        \item $f$ is an $\additiveReal$-\emph{inflation} and $C$ is a \emph{cone} (of $f$).
        \item $g$ is an $\additiveReal$-\emph{deflation} and $A$ is a \emph{cocone} (of $g$).
        \item $B$ is an \emph{extension} (of $A$ by $C$).
    \end{enumerate}
\end{Dfn}

\begin{Prp}\cite[Proposition~3.3~\&~Proposition~3.11]{nakaoka2019extriangulated}\label{prp: sequences}
    For each $\additiveReal$-conflation $A\to B\to C$ and $X\in\extriangulatedCat$, there exist exact sequences in Ab of the form:
    \begin{align*}
        \text{Hom}_\extriangulatedCat(X,A)\to \text{Hom}_\extriangulatedCat(X,B)\to \text{Hom}_\extriangulatedCat(X,C)\to \extFunc(X,A)\to \extFunc(X,B) \to \extFunc(X,C), \\
        \text{Hom}_\extriangulatedCat(C,X)\to \text{Hom}_\extriangulatedCat(B,X)\to \text{Hom}_\extriangulatedCat(A,X)\to \extFunc(C,X)\to \extFunc(B,X) \to \extFunc(A,X).
    \end{align*}
\end{Prp}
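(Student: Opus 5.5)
The exactness of both sequences comes from the axioms of an extriangulated category, specifically the realization $\additiveReal$ and the additivity and functoriality of $\extFunc$. I will prove the covariant sequence (the first one) in detail. The contravariant one follows by the dual argument, since the axioms are self-dual under passing to the opposite extriangulated category. Write the conflation as $A\xrightarrow{f}B\xrightarrow{g}C$ realizing $\delta\in\extFunc(C,A)$. The connecting map is $\delta_\sharp\colon \text{Hom}_\extriangulatedCat(X,C)\to\extFunc(X,A)$, given by $h\mapsto h^*\delta$. The remaining maps are $f_*,g_*$ on $\text{Hom}$ and $f_*,g_*$ on $\extFunc$.

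The plan has four steps.

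\textbf{Step 1: consecutive composites vanish.}
\begin{enumerate}
\item $gf=0$ holds because $\additiveReal$-conflations realize $\mathbb{E}$-extensions and are weak kernel--cokernel pairs.
\item $\delta_\sharp(g\circ -)=0$ because $g^*\delta=0$, a consequence of the realization axiom.
\item $f_*h^*\delta=h^*f_*\delta$ and $f_*\delta=0$.
\item $g_*f_*=(gf)_*=0$.
\end{enumerate}
Each of these is a short check from the definition of a realization together with bifunctoriality.

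\textbf{Step 2: exactness at $\text{Hom}(X,B)$.} Let $b\colon X\to B$ with $gb=0$. Compare the split conflation $X\to X\oplus 0\to 0$ with our conflation via the morphism of extensions $(b,0)$. Axiom (ET3)$^{\mathrm{op}}$ then produces a map $a\colon X\to A$ with $fa=b$. The cleaner route is to use that $f$ is a weak kernel of $g$, which Nakaoka--Palu derive from (ET3).

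\textbf{Step 3: exactness at $\text{Hom}(X,C)$.} If $h^*\delta=0$, the pulled-back extension splits. By (ET4)-type compatibility of realizations with pullbacks, $h$ then factors through $g$.

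\textbf{Step 4: exactness at $\extFunc(X,A)$ and $\extFunc(X,B)$.} This is the main obstacle, because it needs the octahedral-type axioms (ET4) and (ET4)$^{\mathrm{op}}$.
\begin{enumerate}
\item At $\extFunc(X,A)$: given $\epsilon$ realized by $A\to E\to X$ with $f_*\epsilon=0$, pushing out along $f$ gives a split conflation. Applying (ET4) to the composite inflation $A\to E$ and $A\to B$, I would construct a map $X\to C$ whose pullback of $\delta$ is $\epsilon$.
\item At $\extFunc(X,B)$: given $\theta$ with $g_*\theta=0$, I would use (ET4)$^{\mathrm{op}}$ to lift $\theta$ along $f_*$.
\end{enumerate}
I would follow Nakaoka--Palu's Corollary 3.12 and Proposition 3.11 closely here. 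Careful diagram chasing, including the sign conventions in (ET4), is the hardest part.
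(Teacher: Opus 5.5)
The paper gives no proof of this statement; it imports it from Nakaoka--Palu. Their argument is the one you are reconstructing: derive the covariant sequence from the axioms, then pass to $(\extriangulatedCat^{\mathrm{op}},\extFunc^{\mathrm{op}},\additiveReal^{\mathrm{op}})$. Steps 1 and 2 are fine, apart from two slips. First, $gf=0$ should come from realizing the morphism $(0,1_C)\colon\delta\to 0\in\extFunc(C,0)$, not from the weak-kernel property, which is what Step 2 proves. Second, what you feed into (ET3)$^{\mathrm{op}}$ is the commutative square with components $b$ and $0$ out of $X\xrightarrow{1}X\to 0$, not a morphism of extensions.

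The real gap is in Step 4, the crux: the axioms you name there do not apply. In 4(1), $A\to E$ and $A\to B$ are not composable, so (ET4) cannot be applied to them. In 4(2), (ET4)$^{\mathrm{op}}$ needs two composable deflations, but $g\colon B\to C$ and the deflation $M\to X$ realizing $\theta$ are not composable. As written, neither step goes through. Step 3's appeal to ``(ET4)-type compatibility'' is also misplaced, and (ET4) involves no sign conventions.

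The correct bookkeeping is as follows.

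\emph{Exactness at $\text{Hom}(X,C)$.} No octahedral axiom is needed. The pair $(1_A,h)\colon h^*\delta\to\delta$ is a morphism of extensions, and $h^*\delta=0$ is realized by the split $A\to A\oplus X\to X$. So (ET2), via the realization property and additivity of $\additiveReal$, gives $u\colon A\oplus X\to B$ with $gu=h\circ(0\ 1)$. Then $u\binom{0}{1}$ lifts $h$.

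\emph{Exactness at $\extFunc(X,A)$.} Again (ET2) and (ET3) suffice. Realize $\epsilon$ by $A\xrightarrow{p}E\xrightarrow{q}X$. Since $(f,1_X)\colon\epsilon\to f_*\epsilon=0$ is a morphism of extensions, (ET2) gives $e_1\colon E\to B$ with $e_1p=f$. Applying (ET3) to the square $e_1p=f\circ 1_A$ gives $h\colon X\to C$ with $(1_A,h)\colon\epsilon\to\delta$ a morphism of extensions, i.e.\ $\epsilon=h^*\delta$.

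\emph{Exactness at $\extFunc(X,B)$.} This is the only step that uses an octahedral axiom, and it is (ET4), not its dual. Realize $\theta$ by $B\xrightarrow{m}M\to X$ and apply (ET4) to the composable inflations $A\xrightarrow{f}B\xrightarrow{m}M$. This gives $\delta''\in\extFunc(N,A)$ realized by $A\xrightarrow{mf}M\to N$, together with a conflation $C\to N\xrightarrow{e'}X$ realizing $g_*\theta=0$, such that $f_*\delta''=e'^*\theta$. Since this conflation realizes zero, $e'$ has a section $s$, and $\theta=s^*e'^*\theta=f_*(s^*\delta'')$.

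(ET4)$^{\mathrm{op}}$ is what the dual sequence needs, at $\extFunc(B,X)$, by composing the deflations $M\to B\xrightarrow{g}C$.
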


\begin{Dfn}\label{dfn: subcategories}
    Let $\subcat\subseteq \extriangulatedCat$. 
    \begin{enumerate}[label=\DfnLbl]
        \item We define the class of objects left orthogonal to $\subcat$ with respect to $\extFunc$ as
            \[
            {}^{\perp_1}\subcat:= \{ C\in\extriangulatedCat \mid \extFunc(C,S)=0 \text{ for all } S\in\subcat\}.
            \] 
            The class $\subcat^{\perp_1}$ of objects right orthogonal to $\subcat$ with respect to $\extFunc$ is defined dually.

        \item An $\subcat$-\emph{resolution} of an object $C\in\extriangulatedCat$ is a sequence $\{L_{i+1}\to S_i\to L_i\}_{i\in\mathbb{Z}_{\ge0}}$ of $\additiveReal$-conflations such that $L_0=C$ and $S_i\in\subcat$ for each $i\in\mathbb{Z}_{\ge0}$. If, furthermore, there exists an integer $k\ge0$ such that $L_k\not\simeq0$ and $L_{k+1}=0$, and therefore a smallest such integer $m$, then the \emph{length} of the $\subcat$-resolution is $m$; otherwise, it is $\infty$. An $\subcat$-\emph{coresolution of length} $m\in\mathbb{Z}_{\ge 0}\cup\{\infty\}$ of an object $C\in\extriangulatedCat$ is defined dually.

        \item We recursively define the subcategories of objects in $\extriangulatedCat$ with $\subcat$-resolutions of length at most $m\in\mathbb{Z}_{\ge0}$ as
    \[
        \subcat^\land_m := \begin{cases} \subcat &\text{if } m=0, \\ \{C\in\extriangulatedCat \mid \exists \text{ an }\additiveReal\text{-conflation } S'\to S\to C \text{ with } S\in\subcat \text{ and } S'\in\subcat^\land_{m-1}\} &\text{if } m>0, \end{cases}
    \] 
    and define the subcategory of objects in $\extriangulatedCat$ with finite $\subcat$-resolutions as
    \[
        \subcat^\land := \bigcup_{m\ge 0} \subcat^\land_m.
    \] 
    The subcategories $\subcat^\lor_m$, of objects in $\extriangulatedCat$ with $\subcat$-coresolutions of length at most $m\in\mathbb{Z}_{\ge0}$,  and $\subcat^\lor$, of objects in $\extriangulatedCat$ with finite $\subcat$-coresolutions, are defined dually. 
    
        \item We say that $\subcat$ is \emph{closed under}
            \begin{itemize}
                \item \emph{(co)cones} in $\extriangulatedCat$ if, for each $\additiveReal$-conflation $A\to B\to C$ with $A,B\in\subcat$ (respectively, with $C,B\in\subcat$), it follows that $C\in\subcat$ (respectively, that $A\in\subcat$);
                \item \emph{extensions} in $\extriangulatedCat$ if, for each $\additiveReal$-conflation $A\to B\to C$ with $A,C\in\subcat$, it follows that $B\in\subcat$;

                \item \emph{summands} in $\extriangulatedCat$ if, for each $\additiveReal$-conflation $A\to B\to C$ such that $\extFunc(C,A)=0$ (equivalently, for each $B\simeq A\bigoplus C$ in $\extriangulatedCat$) with $B\in\subcat$, it follows that $A,C\in\subcat$.
            \end{itemize}

        \item We say that the intersection of all subcategories of $\extriangulatedCat$ which contain $\subcat$ and satisfy some set of closure conditions is the \emph{closure} of $\subcat$ under the respective condition(s).

        \item We denote the closure of $\subcat$ under cones, cocones, extensions and summands in $\extriangulatedCat$ by $\text{thick}(\subcat)$, and we say that $\subcat$ is a \emph{thick subcategory} of $\extriangulatedCat$ if $\text{thick}(\subcat)=\subcat$.

        \item For $\subcat'\subseteq\extriangulatedCat$, we define the subcategory of extensions in $\extriangulatedCat$ of objects in $\subcat$ by objects in $\subcat'$ as
            \[
                \subcat\ast\subcat' = \{C\in\extriangulatedCat \mid \exists \ \text{ an } \additiveReal\text{-conflation } S\to C\to S' \text{ with } S\in\subcat \text{ and } S'\in\subcat' \}.
            \] 
    \end{enumerate}
\end{Dfn}

\begin{Rmk}\label{rmk: subcategories}
    Let $\subcat\subseteq \extriangulatedCat$.
    \begin{enumerate}[label=\RmkLbl]
        \item It follows from Proposition~\ref{prp: sequences} that ${}^{\perp_1}\subcat$ is closed under extensions in $\extriangulatedCat$. Moreover, by the additivity of the bifunctor $\extFunc$, we have that ${}^{\perp_1}\subcat$ is also closed under summands in $\extriangulatedCat$. Analogously, $\subcat^{\perp_1}$ has the same closure properties.

        \item If $\extriangulatedCat$ is triangulated then, by rotating the trivial distinguished triangles given by the identity morphisms, the conditions of closure under cones and cocones in $\extriangulatedCat$ imply those of closure under positive and negative shifts, respectively. In this case, the previous definition of thick subcategory coincides with the usual one in triangulated categories, as noted in \cite[Example~4.8]{nakaoka2022localization}, and $\text{thick}(\subcat)$ is precisely the smallest triangulated subcategory of $\extriangulatedCat$ which contains $\subcat$ and is closed under summands in $\extriangulatedCat$.

        \item If we consider $\ast$ as a binary operation on subcategories of $\extriangulatedCat$, then it is associative by the analog of the octahedral axiom for extriangulated categories and its dual.
    \end{enumerate}
\end{Rmk}

\begin{Prp-Dfn}\cite[Proposition~3.24~\&~Definition~3.25]{nakaoka2019extriangulated}\label{dfn: E-projectives}
    \begin{enumerate}[label=\DfnLbl]
        \item We define the subcategory of $\extFunc$-\emph{projective objects} in $\extriangulatedCat$ as $\text{Proj}_\extFunc(\extriangulatedCat):={}^{\perp_1}\extriangulatedCat$ and say that $\extriangulatedCat$ \emph{has enough $\extFunc$-projectives} if, for each $C\in\extriangulatedCat$, there exists an $\additiveReal$-deflation $P\to C$ with $P\in\text{Proj}_\extFunc(\extriangulatedCat)$. 

        \item We define the subcategory of $\extFunc$-\emph{injective objects} in $\extriangulatedCat$ as $\extriangulatedCat^{\perp_1}=:\text{Inj}_\extFunc(\extriangulatedCat)$ and say that $\extriangulatedCat$ \emph{has enough $\extFunc$-injectives} if, for each $C\in\extriangulatedCat$, there exists an $\additiveReal$-inflation $C\to I$ with $I\in\text{Inj}_\extFunc(\extriangulatedCat)$. 

    \item The $\extFunc$-\emph{projective} (respectively, $\extFunc$-\emph{injective}) \emph{dimension} of $\subcat\subseteq\extriangulatedCat$ is
            \begin{align*}
                \text{pd}_\extFunc(\subcat) &:= \text{sup}\big(\big\{\text{min}(\{m\in\mathbb{Z}_{\ge0} \mid S\in \text{Proj}_\extFunc(\extriangulatedCat)^\land_m\}) \mid S\in\subcat\big\}\big). \\
                \bigg( \text{id}_\extFunc(\subcat) &:= \text{sup}\big(\big\{\text{min}(\{m\in\mathbb{Z}_{\ge0} \mid S\in \text{Inj}_\extFunc(\extriangulatedCat)^\lor_m\}) \mid S\in\subcat\big\}\big).  \bigg)
            \end{align*}
    \end{enumerate}
\end{Prp-Dfn}

\begin{Dfn}\label{dfn: cotorsion}
    A \emph{cotorsion pair} in $\extriangulatedCat$ is a pair $\cotorsionPair$ of subcategories of $\extriangulatedCat$ such that $\cotorsionClass={}^{\perp_1}\cotorsionfreeClass$ and $\cotorsionClass^{\perp_1}=\cotorsionfreeClass$. Furthermore, it is
    \begin{enumerate}[label=\DfnLbl]
        \item \emph{generated by} $\subcat\subseteq\extriangulatedCat$ if $\cotorsionfreeClass=\subcat^{\perp_1}$;
        \item \emph{cogenerated by} $\subcat\subseteq\extriangulatedCat$ if $\cotorsionClass={}^{\perp_1}\subcat$;
        \item \emph{complete} if, for each $C\in\extriangulatedCat$, there exist $\additiveReal$-conflations $Y\to X\to C$ and $C\to Y'\to X'$ with $X,X'\in\cotorsionClass$ and $Y,Y'\in\cotorsionfreeClass$;
        \item \emph{bounded} if $\cotorsionClass^\land=\extriangulatedCat=\cotorsionfreeClass^\lor$.
    \end{enumerate}
\end{Dfn}

\begin{Rmk}\label{rmk: orthogonality}
    Taking orthogonals reverses inclusions, i.e., if $\subcat_1\subseteq\subcat_2\subseteq\extriangulatedCat$, then ${}^{\perp_1}\subcat_2\subseteq{}^{\perp_1}\subcat_1$ and $\subcat_2^{\perp_1}\subseteq\subcat_1^{\perp_1}$. From this it follows that any cotorsion pair $\cotorsionPair$ in $\extriangulatedCat$ is determined uniquely by either $\cotorsionClass$ or $\cotorsionfreeClass$, and moreover that $\text{Proj}_\extFunc(\extriangulatedCat)\subseteq \cotorsionClass$ and $\text{Inj}_\extFunc(\extriangulatedCat)\subseteq \cotorsionfreeClass$.
\end{Rmk}

\begin{Dfn}\label{dfn: positive}\cite[\S3]{gorsky2021positive}
    We say that $\extriangulatedTriple$ \emph{has positive extensions} if, for each $n>1$, there exists an additive bifunctor $\extFunc^n:\mathscr{C}^\text{op}\times \mathscr{C}\to \text{Ab}$ such that, for each $\additiveReal$-conflation $A\to B\to C$ and $X\in\extriangulatedCat$, there exist exact sequences in Ab of the form:
    \begin{align*}
        \extFunc^{n-1}(X,C)\to \extFunc^n(X,A)\to \extFunc^n(X,B)\to \extFunc^n(X,C), \\
        \extFunc^{n-1}(A,X)\to \extFunc^n(C,X)\to \extFunc^n(B,X)\to \extFunc^n(A,X),
    \end{align*}
    where $\extFunc^1:=\extFunc$. In this case, we define $\extFunc^0:=\text{Hom}_\extriangulatedCat$ (c.f. Proposition~\ref{prp: sequences}). 
\end{Dfn}

\subsection{Positive extensions and silting subcategories}\label{ssec: positive}

Let $\extriangulatedTriple$ have positive extensions. This allows us to consider silting subcategories and relate them to a particular class of cotorsion pairs.

\begin{Dfn}\label{dfn: subcategories'}
    Let $\subcat\subseteq \extriangulatedCat$. 
    \begin{enumerate}[label=\DfnLbl] 

        \item For $I\subseteq \mathbb{Z}_{\ge 0}$, we define the subcategories
            \begin{align*}
                {}^{\perp_I}\subcat &:= \{C\in\extriangulatedCat \mid \extFunc^i(C,S)=0 \text{ for all } S\in\subcat \text{ and } i\in I \}; \\
                \subcat^{\perp_I} &:= \{C\in\extriangulatedCat \mid \extFunc^i(S,C)=0 \text{ for all } S\in\subcat \text{ and } i\in I \};
            \end{align*}
            where $I$ is denoted by an interval (possibly using shorthand notation such as $>0$) if it is such.

        \item We say that $\subcat$ is a \emph{presilting} subcategory of $\extriangulatedCat$ if $\subcat\subseteq {}^{\perp_{>0}}\subcat$ (equivalently, $\subcat\subseteq\subcat^{\perp_{>0}}$) and $\subcat$ is closed under summands in $\extriangulatedCat$. If, furthermore, $\text{thick}(\subcat)=\extriangulatedCat$, then it is a \emph{silting} subcategory of $\extriangulatedCat$.

        \item If $\extriangulatedCat$ has enough $\extFunc$-projectives, we say that a presilting subcategory $\subcat$ of $\extriangulatedCat$ is $m$-\emph{tilting} if there exists $m\in\mathbb{Z}_{\ge0}$ such that $\subcat\subseteq\text{Proj}_\extFunc(\extriangulatedCat)^\land_m$ and $\text{Proj}_\extFunc(\extriangulatedCat)\subseteq\subcat^\lor_m$.

        \item If $\extriangulatedCat$ has enough $\extFunc$-injectives, we say that a presilting subcategory $\subcat$ of $\extriangulatedCat$ is $m$-\emph{cotilting} if there exists $m\in\mathbb{Z}_{\ge0}$ such that $\subcat\subseteq\text{Inj}_\extFunc(\extriangulatedCat)^\lor_m$ and $\text{Inj}_\extFunc(\extriangulatedCat)\subseteq\subcat^\land_m$.

        \item A cotorsion pair $\cotorsionPair$ in $\extriangulatedCat$ is \emph{hereditary} if $\cotorsionClass\subseteq {}^{\perp_{>0}}\cotorsionfreeClass$.
    \end{enumerate}
\end{Dfn}

\begin{Rmk}\label{rmk: heredity}
    Let $\subcat\subseteq\extriangulatedCat$.
    \begin{enumerate}[label=\RmkLbl]
        \item Analogously to Remark~\ref{rmk: subcategories}(1), it follows from Definition~\ref{dfn: positive} that $\subcat^{\perp_{>0}}$ (respectivley, ${}^{\perp_{>0}}\subcat$) is closed under extensions, summands and (co)cones in $\extriangulatedCat$. 

        \item As in Remark~\ref{rmk: orthogonality}, if $\subcat_1\subseteq\subcat_2\subseteq\extriangulatedCat$, then ${}^{\perp_{>0}}\subcat_2\subseteq{}^{\perp_{>0}}\subcat_1$ and $\subcat_2^{\perp_{>0}}\subseteq\subcat_1^{\perp_{>0}}$.

        \item If $\cotorsionPair$ is a hereditary cotorsion pair in $\extriangulatedCat$, it follows from (2) that $\cotorsionKernel$ is a presilting subcategory of $\extriangulatedCat$. This hints towards a fundamental relationship between hereditary cotorsion pairs and silting subcategories, which is explicitly stated in Theorem~\ref{thm: AT}.
    \end{enumerate}
\end{Rmk}

The following result is central to this work, as will become evident in Section~\ref{ssec: producers}. It is an extriangulated generalization of \cite[Corollary~5.9]{mendoza2013auslander} (see \cite[Corollary~5.11]{adachi2022hereditary}).

\begin{Thm}\cite[Theorem~5.7]{adachi2022hereditary}\label{thm: AT}
    There exist mutually inverse bijections
    \begin{center}
        \begin{tikzcd}
            \{ \text{bounded hereditary complete cotorsion pairs in } \extriangulatedCat \} \arrow[shift left]{r}[]{\Phi} & \{\text{silting subcategories of } \extriangulatedCat \} \arrow[shift left]{l}[]{\Psi},
        \end{tikzcd}
    \end{center}
    where $\Phi\cotorsionPair:=\cotorsionKernel$ and $\Psi(\subcat):=(\subcat^\lor,\subcat^\land)$. 
\end{Thm}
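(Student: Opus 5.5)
The statement is the Adachi–Tsukamoto bijection. I would prove it by showing that $\Psi$ lands in bounded hereditary complete cotorsion pairs, that $\Phi$ lands in silting subcategories, and that the two composites are identities. Throughout I rely on Definition~\ref{dfn: positive} and Remark~\ref{rmk: heredity}, which give closure of ${}^{\perp_{>0}}\subcat$ and $\subcat^{\perp_{>0}}$ under extensions, summands and (co)cones.

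\emph{Step 1: $\Phi$ lands in silting subcategories.} Let $\cotorsionPair$ be bounded, hereditary and complete, and put $\subcat:=\cotorsionKernel$.
\begin{enumerate}
\item $\subcat$ is presilting by Remark~\ref{rmk: heredity}(3).
\item For any $C$, completeness gives $C\to Y'\to X'$ with $Y'\in\cotorsionfreeClass$ and $X'\in\cotorsionClass$. Since $\cotorsionClass$ is extension-closed, one checks that the left term of a $\cotorsionfreeClass$-to-$\cotorsionClass$ approximation of an object of $\cotorsionClass$ lies in $\subcat$.
\item From this, every $X\in\cotorsionClass$ admits a conflation $X\to S\to X''$ with $S\in\subcat$ and $X''\in\cotorsionClass$. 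Dually, every $Y\in\cotorsionfreeClass$ admits $Y''\to S\to Y$ with $S\in\subcat$ and $Y''\in\cotorsionfreeClass$.
\item Boundedness gives $C\in\cotorsionClass^\land$. Using heredity, I would show $\cotorsionClass\subseteq\subcat^\lor$: iterate the conflations in (3) and use that $\cotorsionfreeClass^\lor=\extriangulatedCat$, so a dimension-shifting argument with $\extFunc^{n}$ makes the tail eventually in $\cotorsionClass\cap\cotorsionfreeClass$.
\item Hence $\extriangulatedCat=\cotorsionClass^\land\subseteq\text{thick}(\subcat)$.
\end{enumerate}

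\emph{Step 2: $\Psi$ lands in bounded hereditary complete cotorsion pairs.} Let $\subcat$ be silting.
\begin{enumerate}
\item Heredity: objects of $\subcat^\lor$ are iterated cocone-type extensions of objects of $\subcat$, and objects of $\subcat^\land$ are built from $\subcat$ by cone-type extensions. The long exact sequences then give $\extFunc^{>0}(\subcat^\lor,\subcat^\land)=0$.
\item Equalities of orthogonals: I would show $\subcat^\land=(\subcat^\lor)^{\perp_1}$ and $\subcat^\lor={}^{\perp_1}(\subcat^\land)$. This needs completeness and the key fact $\text{thick}(\subcat)=\subcat^\lor\ast\subcat^\land$. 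With these, $Y\in(\subcat^\lor)^{\perp_1}$ decomposed as $X\to Y\to Y'$ splits, using $\extFunc(Y',X)$-vanishing obtained by dimension shifting, so $Y$ is a summand of an object of $\subcat^\land$.
\item Summand closure of $\subcat^\land$: this I would prove via a Wakamatsu-type argument.
\item Boundedness: it follows from $\extriangulatedCat=\subcat^\lor\ast\subcat^\land$, since $\subcat^\lor\subseteq(\subcat^\land)^\lor$.
\end{enumerate}

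The main obstacle is the identity $\text{thick}(\subcat)=\subcat^\lor\ast\subcat^\land$. I would first show that the right-hand side contains $\subcat$. Then I would show it is closed under cones, cocones, extensions and summands, using associativity of $\ast$ from Remark~\ref{rmk: subcategories}(3) and the reordering $\subcat^\land\ast\subcat^\lor\subseteq\subcat^\lor\ast\subcat^\land$. That reordering comes from $\extFunc(\subcat^\lor,\subcat^\land)=0$ together with octahedral manipulations, which swap an $\subcat$-term across a conflation one step at a time. Summand closure is the most delicate point; I would handle it by an Eilenberg-swindle-free argument through the orthogonality description from Step 2(2).

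\emph{Step 3: mutual inverses.}
\begin{enumerate}
\item $\Phi\Psi(\subcat)=\subcat^\lor\cap\subcat^\land$. Given $C$ in this intersection, take $C\to S\to C'$ with $C'\in\subcat^\lor$; it splits because $\extFunc(C',C)=0$. Hence $C\in\subcat$ by summand closure.
\item $\Psi\Phi\cotorsionPair=\cotorsionPair$. Step 1 gives $\cotorsionClass\subseteq\subcat^\lor$, and conversely $\subcat^\lor\subseteq\cotorsionClass$ by extension closure of $\cotorsionClass$ applied to cocone sequences, which uses heredity. A cotorsion pair is determined by its left class (Remark~\ref{rmk: orthogonality}), so this suffices.
\end{enumerate}
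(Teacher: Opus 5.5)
\paragraph{Overview.} The paper does not reprove this statement. It quotes \cite[Theorem~5.7]{adachi2022hereditary}, which carries the Auslander--Buchweitz argument of Mendoza et al.\ over to extriangulated categories. Your Steps~1 and~3 are essentially sound, up to two small slips. In Step~1(2), the object forced into $\mathcal{X}\cap\mathcal{Y}$ is the \emph{middle} term of $X\to Y'\to X'$, not the left term. In Step~3(2), what you actually use is that $\mathcal{X}={}^{\perp_{>0}}\mathcal{Y}$ is closed under cocones.

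\paragraph{The gap: the key identity is false.} The gap is in Step~2, which rests on the identity $\text{thick}(\mathcal{S})=\mathcal{S}^\lor\ast\mathcal{S}^\land$. This identity holds in triangulated categories, where Aihara--Iyama obtain it by rotating triangles, i.e.\ by moving shifted copies of $\mathcal{S}$ across them. It fails already in exact categories. Here is a counterexample.
\begin{itemize}
\item Let $A$ be the path algebra of $1\to 2\to 3$ over a field, and $\mathscr{C}=\text{mod}\,A$. Write modules as representations $M_1\to M_2\to M_3$, so that $\text{Ext}^1(S_1,M)\cong\operatorname{coker}(M_1\to M_2)$.
\item Take $T=P_1\oplus P_3\oplus S_1$. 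It is self-orthogonal, and $P_2$ is the cocone of $P_1\to S_1$, so $\mathcal{S}=\text{add}\,T$ is silting.
\item Use $\mathcal{S}^\land\subseteq T^{\perp_1}$, $\mathcal{S}^\lor\subseteq{}^{\perp_1}T$, and the conflations $P_3\to P_1\to I_2$ and $P_2\to P_1\to S_1$. These give $\mathcal{S}^\land=\text{add}(P_1\oplus P_3\oplus I_2\oplus S_1)$ and $\mathcal{S}^\lor=\text{add}(P_1\oplus P_2\oplus P_3\oplus S_1)$.
\item The simple module $S_2$ lies in neither class. A simple object admits no conflation $X\to S_2\to Y$ with both $X$ and $Y$ non-zero. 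Hence $S_2\notin\mathcal{S}^\lor\ast\mathcal{S}^\land$.
\item Yet $S_2$ is the cone of $P_3\to P_2$, with $P_2,P_3\in\mathcal{S}^\lor$.
\end{itemize}
So $\mathcal{S}^\lor\ast\mathcal{S}^\land$ is not closed under cones. Everything in Step~2 built on it therefore has no foundation: the orthogonality equalities, boundedness, and your handling of summand closure. That handling was also circular, since you planned to derive it from Step~2(2), which presupposes the identity.

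\paragraph{Two further problems in Step~2.} First, you never prove completeness of $(\mathcal{S}^\lor,\mathcal{S}^\land)$; Step~2(2) only says it is needed. Moreover, a decomposition $X\to C\to Y$ could not produce the required conflations $Y\to X\to C$ and $C\to Y'\to X'$ without rotation. Second, the splitting in Step~2(2) invokes $\mathbb{E}(Y',X)=0$ for $X\in\mathcal{S}^\lor$ and $Y'\in\mathcal{S}^\land$. Heredity gives vanishing only in the other order. For instance, in $\mathcal{K}^b(\text{proj}\,\Lambda)$ with $\mathcal{S}=\text{add}\,\Lambda$, the triangle $\Lambda[-1]\xrightarrow{0}\Lambda\to\Lambda\oplus\Lambda\to\Lambda$ is a non-split conflation of exactly this shape, with middle term in $(\mathcal{S}^\lor)^{\perp_1}$.

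\paragraph{The missing idea: Auslander--Buchweitz approximation.} Put $\mathcal{X}:=\mathcal{S}^\lor$. It is closed under extensions, cocones and summands, and $\mathcal{S}$ is an $\mathcal{X}$-injective cogenerator.
\begin{enumerate}
\item By induction on $\mathcal{X}$-resolution length, using (ET4) and its dual, every $C\in\mathcal{X}^\land$ admits conflations $Y_C\to X_C\to C$ and $C\to Y^C\to X^C$ with $X_C,X^C\in\mathcal{S}^\lor$ and $Y_C,Y^C\in\mathcal{S}^\land$.
\item One then shows that $\mathcal{X}^\land$ is thick, hence $\mathcal{X}^\land=\text{thick}(\mathcal{S})=\mathscr{C}$.
\end{enumerate}
This yields completeness and half of boundedness at once; dually, $(\mathcal{S}^\land)^\lor=\mathscr{C}$. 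The orthogonality equalities then follow by splitting these approximations. For example, for $C\in(\mathcal{S}^\lor)^{\perp_1}$ the conflation $C\to Y^C\to X^C$ splits. This step also needs summand closure of $\mathcal{S}^\land$ and $\mathcal{S}^\lor$, which is part of the same Auslander--Buchweitz package rather than a consequence of the orthogonality description.
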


Theorem~\ref{thm: AT} induces a partial order for silting subcategories of $\extriangulatedCat$ as follows.

\begin{Prp-Dfn}\cite[Proposition~5.12]{adachi2022hereditary}\label{prp-dfn: order}
    For silting subcategories $\mathcal{M}$ and $\mathcal{N}$ of $\extriangulatedCat$, let $\mathcal{M}\geqslant\mathcal{N}$ denote the condition $\extFunc^n(\mathcal{M},\mathcal{N})=0$ for all $n>0$. Then the following statements are equivalent.
    \begin{enumerate}[label=\PrpLbl]
        \item $\mathcal{M}\geqslant\mathcal{N}$.
        \item $\mathcal{M}^\land\supseteq\mathcal{N}^\land$.
        \item $\mathcal{M}^\lor\subseteq\mathcal{N}^\lor$.
    \end{enumerate}
    In particular, $\geqslant$ is a partial order on the collection of silting subcategories of $\extriangulatedCat$.
\end{Prp-Dfn}

Another important fact about silting subcategories $\subcat$ is that they are maximal with respect to the orthogonality condition $\subcat\subseteq{}^{\perp_{>0}}\subcat$.

\begin{Lmm}\cite[Lemma~5.3]{adachi2022hereditary}\label{lmm: maximal}
    Let $\mathcal{M}$ be a silting subcategory of $\extriangulatedCat$. If $\mathcal{N}\subseteq\extriangulatedCat$ is such that $\mathcal{M}\subseteq\mathcal{N}\subseteq{}^{\perp_{>0}}\mathcal{N}$, then $\mathcal{M}=\mathcal{N}$.
\end{Lmm}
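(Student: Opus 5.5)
Assume $\mathcal{M}$ is silting and $\mathcal{M}\subseteq\mathcal{N}\subseteq{}^{\perp_{>0}}\mathcal{N}$. The plan is to use the bijection of Theorem~\ref{thm: AT} for $\mathcal{M}$ and show that every $N\in\mathcal{N}$ lies in the kernel $\mathcal{M}^\lor\cap\mathcal{M}^\land=\mathcal{M}$. Since $\mathcal{M}\subseteq\mathcal{N}$, each $N\in\mathcal{N}$ satisfies $\extFunc^i(N,\mathcal{M})=0$ and $\extFunc^i(\mathcal{M},N)=0$ for all $i>0$. So it suffices to prove two inclusions: ${}^{\perp_{>0}}\mathcal{M}\subseteq\mathcal{M}^\lor$ and $\mathcal{M}^{\perp_{>0}}\subseteq\mathcal{M}^\land$.

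By Theorem~\ref{thm: AT}, $(\mathcal{M}^\lor,\mathcal{M}^\land)$ is a bounded hereditary complete cotorsion pair. In particular $\mathcal{M}^\lor={}^{\perp_1}(\mathcal{M}^\land)$ and $\mathcal{M}^\land=(\mathcal{M}^\lor)^{\perp_1}$.

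For the first inclusion, take $N\in{}^{\perp_{>0}}\mathcal{M}$ and an object $Y\in\mathcal{M}^\land$ with a finite $\mathcal{M}$-resolution $\{L_{j+1}\to M_j\to L_j\}$ of length $m$. We show $\extFunc^i(N,Y)=0$ for all $i>0$ by induction on $m$. For $m=0$ this is the hypothesis. For the inductive step, apply the long exact sequence of Definition~\ref{dfn: positive} to $L_1\to M_0\to Y$, which gives
\[
\extFunc^i(N,M_0)\to\extFunc^i(N,Y)\to\extFunc^{i+1}(N,L_1).
\]
The left term vanishes since $M_0\in\mathcal{M}$, and the right term vanishes by induction applied to $L_1$. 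In particular $\extFunc(N,Y)=0$, so $N\in{}^{\perp_1}(\mathcal{M}^\land)=\mathcal{M}^\lor$. The second inclusion, $\mathcal{M}^{\perp_{>0}}\subseteq\mathcal{M}^\land$, is the dual argument using finite $\mathcal{M}$-coresolutions of objects of $\mathcal{M}^\lor$.

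Combining the two inclusions gives $\mathcal{N}\subseteq\mathcal{M}^\lor\cap\mathcal{M}^\land=\Phi\Psi(\mathcal{M})=\mathcal{M}$, hence $\mathcal{M}=\mathcal{N}$.

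The main obstacle is the dimension shift: the sequences of Definition~\ref{dfn: positive} only give exactness at specific spots, so I must check that the segment $\extFunc^i(N,M_0)\to\extFunc^i(N,Y)\to\extFunc^{i+1}(N,L_1)$ is actually available. For $i\geq1$ it is, since the sequence $\extFunc^{n-1}(X,C)\to\extFunc^n(X,A)\to\extFunc^n(X,B)$ with $n=i+1$ provides exactness at $\extFunc^i(N,Y)$. The remaining steps are formal consequences of Theorem~\ref{thm: AT}.
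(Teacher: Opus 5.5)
Your argument is correct. The paper does not prove this lemma itself; it simply imports it from \cite[Lemma~5.3]{adachi2022hereditary}, so there is no in-paper proof to compare against.

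Your route is to derive it as a corollary of Theorem~\ref{thm: AT}:
\begin{itemize}
\item each $N\in\mathcal{N}$ lies in ${}^{\perp_{>0}}\mathcal{M}\cap\mathcal{M}^{\perp_{>0}}$;
\item the cotorsion-pair equalities $\mathcal{M}^\lor={}^{\perp_1}(\mathcal{M}^\land)$ and $\mathcal{M}^\land=(\mathcal{M}^\lor)^{\perp_1}$ then force $N\in\mathcal{M}^\lor\cap\mathcal{M}^\land$;
\item $\Phi\Psi(\mathcal{M})=\mathcal{M}$ finishes.
\end{itemize}
This is legitimate here, because Theorem~\ref{thm: AT} is stated before the lemma and is used as a black box. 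It is also about as short as one could hope for. The price is that maximality becomes a consequence of the entire bijection. In \cite{adachi2022hereditary} the lemma is numbered before the bijection (Theorem~5.7), so your argument could not serve as an ingredient in a proof of that bijection.

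One correction to your last paragraph concerns the exactness justification. Take $n=i+1$, $A=L_1$, $B=M_0$, $C=Y$. The four-term sequence in Definition~\ref{dfn: positive} then reads $\extFunc^{i}(N,Y)\to\extFunc^{i+1}(N,L_1)\to\extFunc^{i+1}(N,M_0)\to\extFunc^{i+1}(N,Y)$. Here $\extFunc^{i}(N,Y)$ is an end term, so this sequence says nothing about exactness there, and neither does any other single four-term piece.

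What you actually need is exactness at the junction where consecutive pieces are spliced, i.e.\ the definition read as providing long exact sequences. That is its intended meaning, and the paper uses it in exactly this form in Lemma~\ref{lmm: good}, Lemma~\ref{lmm: shifting} and Remark~\ref{rmk: heredity}(1).

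The tidiest fix is to quote Remark~\ref{rmk: heredity}(1) directly. $N^{\perp_{>0}}$ contains $\mathcal{M}$ and is closed under cones, hence contains $\mathcal{M}^\land$. Dually, ${}^{\perp_{>0}}N$ contains $\mathcal{M}$ and is closed under cocones, hence contains $\mathcal{M}^\lor$. The rest of your proof goes through as written.
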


The next result is fundamental for understanding the mutation of silting subcategories in extriangulated categories, so we include a proof for completeness. For details, we refer to \cite[\S4.2]{adachi2025assortment} (c.f. \cite[\S2.4]{aihara2012silting} for the original treatment in triangulated categories).

\begin{Dfn}\label{dfn: approximations}
    Let $\mathcal{D}\subseteq\subcat\subseteq\extriangulatedCat$.
    \begin{enumerate}[label=\DfnLbl]
        \item A \emph{left} $\mathcal{D}$-\emph{approximation} (or $\mathcal{D}$-\emph{preenvelope}) of an object $C\in\extriangulatedCat$ is a morphism $C\xrightarrow[]{d}D$ in $\extriangulatedCat$ with $D\in\mathcal{D}$ such that $\text{Hom}_\mathscr{C}(d,D'):\text{Hom}_\mathscr{C}(D,D')\to \text{Hom}_\mathscr{C}(C,D')$ is surjective for all $D'\in\mathcal{D}$. A \emph{right} $\mathcal{D}$-\emph{approximation} (or $\mathcal{D}$-\emph{precover}) of an object $C\in\extriangulatedCat$ is defined dually.
        \item We say that $\mathcal{D}$ is a \emph{covariantly finite subcategory} of $\subcat$ if each object in $\subcat$ has a left $\mathcal{D}$-approximation. If, furthermore, each object in $\subcat$ has a left $\mathcal{D}$-approximation which is an $\additiveReal$-inflation, then it is a \emph{good} covariantly finite subcategory of $\subcat$. A \emph{(good) contravariantly finite} subcategory of $\subcat$ is defined dually.
    \end{enumerate}
\end{Dfn}

\begin{Lmm}\cite[Lemma~4.8]{adachi2025assortment}\label{lmm: good}
    Let $\mathcal{D}\subseteq\subcat\subseteq\extriangulatedCat$ with $\subcat\subseteq{}^{\perp_{>0}}\subcat$. Suppose that there exists an $\additiveReal$-conflation $S\xrightarrow[]{f}D\xrightarrow[]{g}N$ with $S\in\subcat$ and $f$ a left $\mathcal{D}$-approximation of $S$. Then
    \begin{itemize}
        \item $f$ is a left $\mathcal{D}$-approximation of $S$ with $N\in{}^{\perp_{>0}}\mathcal{D}$;
        \item $g$ is a right $\mathcal{D}$-approximation of $N$ with $S\in\mathcal{D}^{\perp_{>0}}$;
        \item $N\in\subcat^{\perp_{>0}}\cap{}^{\perp_{>1}}\subcat$.
    \end{itemize}
\end{Lmm}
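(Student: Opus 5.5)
We apply the long exact sequences of Proposition~\ref{prp: sequences}, together with their higher analogues from Definition~\ref{dfn: positive}, to the conflation $S\xrightarrow{f}D\xrightarrow{g}N$, and use $\mathcal{D}\subseteq\subcat\subseteq{}^{\perp_{>0}}\subcat$, so that $\extFunc^i(S,D')=0=\extFunc^i(D,D')$, $\extFunc^i(S,S')=\extFunc^i(D,S')=0$ and $\extFunc^i(S',S)=\extFunc^i(S',D)=0$ for all $i>0$, $S'\in\subcat$ and $D'\in\mathcal{D}$. The first bullet asserts only that $N\in{}^{\perp_{>0}}\mathcal{D}$. Fix $D'\in\mathcal{D}$ and apply $\text{Hom}(-,D')$ to obtain
\[
\text{Hom}(D,D')\xrightarrow{f^*}\text{Hom}(S,D')\to\extFunc(N,D')\to\extFunc(D,D')=0 .
\]
Surjectivity of $f^*$ is exactly the hypothesis that $f$ is a left $\mathcal{D}$-approximation, so $\extFunc(N,D')=0$. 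For $n\geq 2$ the sequence $\extFunc^{n-1}(S,D')\to\extFunc^n(N,D')\to\extFunc^n(D,D')$ has both outer terms zero, so $\extFunc^n(N,D')=0$.

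For the second bullet, $S\in\mathcal{D}^{\perp_{>0}}$ holds trivially because $S\in\subcat$ and $\mathcal{D}\subseteq\subcat$. To see that $g$ is a right $\mathcal{D}$-approximation, fix $D'\in\mathcal{D}$ and apply $\text{Hom}(D',-)$:
\[
\text{Hom}(D',D)\xrightarrow{g_*}\text{Hom}(D',N)\to\extFunc(D',S)=0 .
\]
Hence $g_*$ is surjective.

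For the third bullet, first show $N\in\subcat^{\perp_{>0}}$. For $S'\in\subcat$ and $n\geq1$, the covariant sequence gives
\[
\extFunc^n(S',D)\to\extFunc^n(S',N)\to\extFunc^{n+1}(S',S).
\]
This needs the sequence to continue into degree $n+1$, which is the $(n+1)$ case of Definition~\ref{dfn: positive} read with $X=S'$: $\extFunc^{n}(S',N)\to\extFunc^{n+1}(S',S)\to\extFunc^{n+1}(S',D)$. Both outer terms vanish, so $\extFunc^n(S',N)=0$.

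Next show $N\in{}^{\perp_{>1}}\subcat$. For $n\geq2$, the contravariant sequence $\extFunc^{n-1}(S,S')\to\extFunc^n(N,S')\to\extFunc^n(D,S')$ has both outer terms zero, since $n-1\geq1$. Degree $1$ is genuinely excluded, because $\extFunc(N,S')$ receives the cokernel of $\text{Hom}(D,S')\to\text{Hom}(S,S')$, which need not vanish.

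The only delicate point is making sure the connecting maps $\extFunc^{n}(X,C)\to\extFunc^{n+1}(X,A)$ and $\extFunc^{n}(A,X)\to\extFunc^{n+1}(C,X)$ are available in every degree used. Definition~\ref{dfn: positive} provides exactly these, by shifting the index $n$ there, so no further input is required.
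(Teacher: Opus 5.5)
Your proof is correct and follows the paper's argument: the same exact sequences give $N\in{}^{\perp_{>0}}\mathcal{D}$, the right-approximation property of $g$, and $N\in{}^{\perp_{>1}}\subcat$, while your degree-wise argument for $N\in\subcat^{\perp_{>0}}$ simply unpacks the paper's appeal to closure of $\subcat^{\perp_{>0}}$ under cones (Remark~\ref{rmk: heredity}(1)), and you rightly note that $S\in\mathcal{D}^{\perp_{>0}}$ is immediate. One small correction: exactness at $\extFunc^n(S',N)$ in $\extFunc^n(S',D)\to\extFunc^n(S',N)\to\extFunc^{n+1}(S',S)$ does not come from the $(n+1)$-case of Definition~\ref{dfn: positive} as written, since that four-term sequence is exact only at $\extFunc^{n+1}(S',S)$ and $\extFunc^{n+1}(S',D)$; it comes from the full long exact sequences of \cite{gorsky2021positive}, which the paper also relies on tacitly at this step.
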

\begin{proof}
    Note that applying $\text{Hom}_\extriangulatedCat(-,\mathcal{D})$ to the $\additiveReal$-conflation induces the exact sequence
    \[
        \extFunc^n(D,\mathcal{D})\to \extFunc^n(S,\mathcal{D})\to \extFunc^{n+1}(N,\mathcal{D})\to \extFunc^{n+1}(D,\mathcal{D})
    \] 
    for all $n\ge0$, and that $\mathcal{D}\subseteq\subcat\subseteq{}^{\perp_{>0}}\subcat\subseteq{}^{\perp_{>0}}\mathcal{D}$, since taking orthogonals reverses inclusions. The second and fourth terms then vanish for all $n>0$, which implies that $N\in{}^{\perp_{>1}}\mathcal{D}$. For $n=0$, the fourth term also vanishes and the leftmost morphism $\text{Hom}_\extriangulatedCat(f,\mathcal{D}):\text{Hom}_\extriangulatedCat(D,\mathcal{D})\to \text{Hom}_\extriangulatedCat(S,\mathcal{D})$ is surjective because $f$ is a left $\mathcal{D}$-approximation of $S$, from which it follows that $N\in{}^{\perp_1}\mathcal{D}$. Hence, $N\in{}^{\perp_{>0}}\mathcal{D}$. 

    Similarly, applying $\text{Hom}_\extriangulatedCat(\mathcal{D},-)$ to the $\additiveReal$-conflation induces the exact sequence
    \[
        \text{Hom}_\extriangulatedCat(\mathcal{D},D)\xrightarrow[]{\text{Hom}_\extriangulatedCat(\mathcal{D},g)} \text{Hom}_\extriangulatedCat(\mathcal{D},N) \to \extFunc(\mathcal{D},S),
    \] 
    and moreover $\subcat\subseteq \subcat^{\perp_{>0}}\subseteq\mathcal{D}^{\perp_{>0}}$. The rightmost term then vanishes, by which the leftmost morphism is a surjection. Thus, $g$ is a right $\mathcal{D}$-approximation of $N$ and, by analogous arguments to the previous paragraph, $S\in\mathcal{D}^{\perp_{>0}}$.

    On the other hand, since $\subcat^{\perp_{>0}}$ is closed under cones in $\extriangulatedCat$ and $\mathcal{D}\subseteq\subcat\subseteq\subcat^{\perp_{>0}}$, it follows from the $\additiveReal$-conflation that $N\in\subcat^{\perp_{>0}}$. Finally, by replacing $\text{Hom}_\extriangulatedCat(-,\mathcal{D})$ with $\text{Hom}_\extriangulatedCat(-,\subcat)$ in the first paragraph, one obtains $N\in{}^{\perp_{>1}}\subcat$.
\end{proof}

\subsection{(Co)products}\label{ssec: (co)products}

Large silting mutation diverges into two possibilities, since finite direct sums can be generalized as either infinite products or infinite coproducts. For reasons explained in Section~\ref{sec: large}, we will focus our preliminaries on products. For a more in-depth discussion on products and coproducts in extriangulated categories, we refer to \cite[]{argudín2025recollements}.

\begin{Dfn}\label{dfn: exact products}\cite[Definition~3.2~\&~dual of Proposition~3.4]{argudín2025recollements}
    We say that $\extriangulatedTriple$ 
    \begin{enumerate}[label=\DfnLbl]
        \item \emph{has products} (or \emph{is (AET3*)}) if, for each family $\{C_j\}_{j\in J}$ of objects in $\extriangulatedCat$, the product
            \[
                \bigg(C=\prod_{j\in J}C_j, \big\{\pi^C_j:C\to C_j\big\}_{j\in J}\bigg)
            \] 
            exists in $\extriangulatedCat$;
        \item \emph{has exact products} (or \emph{is (AET3.5*)}) if it has products and, for all families $\{A_j\}_{j\in J}$, $\{C_j\}_{j\in J}$ of objects in $\extriangulatedCat$, there is a natural transformation 
            \[
                \Gamma^*: \prod_{j\in J}\extFunc(C_j, A_j)\to \extFunc\bigg(\prod_{j\in J}C_j, \prod_{j\in J}A_j\bigg) 
            \]
            such that, if $A_i\xrightarrow[]{f_i} E_i\xrightarrow[]{g_i} C_i$ is an $\additiveReal$-conflation corresponding to $\eta_i\in\extFunc(C_i,A_i)$ for each $i\in J$, then
            \[
                \prod_{j\in J}A_j \xrightarrow[]{\prod_{j\in J}f_j} \prod_{j\in J}E_j \xrightarrow[]{\prod_{j\in J}g_j} \prod_{j\in J}C_j
            \] 
            is an $\additiveReal$-conflation corresponding to $\Gamma^\ast\big(\prod_{j\in J}\eta_j\big)\in \extFunc\big(\prod_{j\in J}C_j, \prod_{j\in J}A_j\big)$.
    \end{enumerate}
\end{Dfn}

\begin{Dfn}\label{dfn: products}
    Let $\extriangulatedTriple$ have products.
    \begin{enumerate}[label=\DfnLbl] 
        \item We say that $\subcat\subseteq\extriangulatedCat$ is \emph{closed under products} in $\extriangulatedCat$ if, for each family $\{S_j\}_{j\in J}$ of objects in $\subcat$, we have that $\prod_{j\in J}S_j\in\subcat$.
        \item We denote the closure of $\subcat\subseteq\extriangulatedCat$ under products and summands in $\extriangulatedCat$ by $\text{Prod}(\subcat)$. 
        \item We say that $Q\in\text{Inj}_\extFunc(\extriangulatedCat)$ is an $\extFunc$-\emph{injective cogenerator} of $\extriangulatedCat$ if, for each $C\in\extriangulatedCat$, there exist a set $J$ and an $\additiveReal$-inflation $C\to Q^J$.
    \end{enumerate}
    If $(\extriangulatedCat^\text{op},\extFunc^\text{op},\additiveReal^\text{op})$ has products, the dual notions of (a) and (c) are defined analogously in $\extriangulatedCat$, and the closure of $\subcat\subseteq\extriangulatedCat$ under coproducts and summands in $\extriangulatedCat$ is denoted by $\text{Add}(\subcat)$.
\end{Dfn}

\begin{Rmk}\label{rmk: products}
    Let $\extriangulatedTriple$ have products.
    \begin{enumerate}[label=\RmkLbl]
        \item By the dual of \cite[Proposition~3.1(b)]{argudín2025recollements} we have that, for each $X\in\extriangulatedCat$ and family $\{C_j\}_{j\in J}$ of objects in $\extriangulatedCat$, there exists a monomorphism
            \[
                \extFunc\bigg(X,\prod_{j\in J}C_j\bigg) \hookrightarrow \prod_{j\in J} \extFunc(X,C_j).
            \] 
            Since $\extFunc$ is additive, for each $\subcat\subseteq\extriangulatedCat$, it follows that $\text{Prod}(\subcat^{\perp_1})=\subcat^{\perp_1}$. In particular, $\text{Prod}(\text{Inj}_\extFunc(\extriangulatedCat))=\text{Inj}_\extFunc(\extriangulatedCat)$.

        \item If $\extriangulatedCat$ has an $\extFunc$-injective cogenerator $Q$, then (1) implies that $\text{Prod}(Q)\subseteq\text{Inj}_\extFunc(\extriangulatedCat)$, by which $\extriangulatedCat$ has enough $\extFunc$-injectives, and moreover that $\text{Prod}(Q)=\text{Inj}_\extFunc(\extriangulatedCat)$, since all $\additiveReal$-conflations of the form $I\to Q^J\to C$ with $I\in\text{Inj}_\extFunc(\extriangulatedCat)$ split.
    \end{enumerate}
\end{Rmk}

\section{Large silting mutation}\label{sec: large}

Our main motivation for this work lies in cosilting mutation (see \cite[]{hügel2025mutation} for reference), a topic inextricably linked to injective modules, which are in general closed under (direct) products but not coproducts. We thus develop a theory of mutation for product-closed silting subcategories in Section~\ref{ssec: product-closed}, noting that the dual theory for coproduct-closed silting subcategories is valid for extriangulated categories satisfying the dual conditions. We also focus on right approximations instead of left ones (for readers familiar with $\tau$-tilting theory: this is because cosilting modules\textemdash i.e., the $0$-th cohomologies of 2-term cosilting complexes\textemdash are a large generalization of support $\tau^{-1}$-tilting modules). We then show in Section~\ref{ssec: producers} how, under some additional hypotheses, this theory can be used to mutate the objects which produce product-closed silting subcategories, which we aptly call \emph{producers}. Section~\ref{ssec: special} focuses on some special cases which appear in the examples of Section~\ref{sec: examples} and can be given a more detailed description. 

\subsection{Product-closed silting subcategories}\label{ssec: product-closed}

We develop a theory of mutation for product-closed silting subcategories in extriangulated categories. Let $\extriangulatedTriple$ have positive extensions and exact products, and $\subcat=\text{Prod}(\subcat)$ be a presilting subcategory of $\extriangulatedCat$.

\begin{Dfn}\label{dfn: product-mutation}
    Let $\mathcal{D}$ be a good contravariantly finite subcategory of $\subcat$ and, for each $S\in\subcat$, consider a right $\mathcal{D}$-approximation $D\xrightarrow[]{g}S$ and its corresponding $\additiveReal$-conflation 
    \[
    N_S\to D\xrightarrow[]{g} S.
    \] 
    We define the \emph{right product-mutation} of the presilting subcategory $\subcat$ with respect to $\mathcal{D}$ as
    \[
        \prod\nolimits^R(\subcat;\mathcal{D}):=\text{Prod}(\mathcal{D}\cup\{N_S\}_{S\in\subcat}).
    \] 
    The \emph{left product-mutation} $\prod\nolimits^L(\subcat;\mathcal{D})$ of $\subcat$ with respect to a good \emph{covariantly} finite subcategory $\mathcal{D}$ of $\subcat$ is defined dually.
\end{Dfn}

\begin{Rmk}\label{rmk: product-mutation}
    Following the argument in \cite[Remark~4.10(1)]{adachi2025assortment}, the left (respectively, right) product-mutation of $\subcat$ with respect to a good co(ntra)variantly finite subcategory $\mathcal{D}$ does not depend on a particular choice of a left (right) $\mathcal{D}$-approximation. The same argument appears explicitly in the proof of Proposition~\ref{prp: left invariance}, which implies that, when considering left product-mutation, we may assume that the good covariantly finite subcategory with respect to which we mutate is product-closed. 

    We note that the aforementioned implication follows automatically if $\extriangulatedTriple$ is \emph{weakly idempotent complete}, i.e., if for each composition of morphisms $h=gf$ in $\extriangulatedCat$ such that $h$ is an $\additiveReal$-inflation, the same follows for $f$, and dually for $\additiveReal$-deflations. In this case, one can prove that a subcategory of $\extriangulatedCat$ is closed under summands if, and only if, it is closed under retracts, and then use the universal property of the product to prove that any covariantly finite subcategory of $\extriangulatedCat$ which is closed under summands in $\extriangulatedCat$ is also closed under products in $\extriangulatedCat$. It then suffices to observe that, by \cite[Remark~2.18]{nakaoka2019extriangulated}, weak idempotent completeness is inhereted by subcategories of $\extriangulatedCat$ which are closed under extensions in $\extriangulatedCat$. Thus, if $\subcat$ is a silting subcategory of $\extriangulatedCat$, then $\text{Prod}(\subcat)=\subcat$.
\end{Rmk}

\begin{Prp}[]\label{prp: left invariance}
    Let $\mathcal{D}$ be a good covariantly finite subcategory of $\subcat$. Then $\text{Prod}(\mathcal{D})$ is a good covariantly finite subcategory of $\subcat$ and
    \[
        \prod\nolimits^L(\subcat;\mathcal{D})=\prod\nolimits^L(\subcat;\text{Prod}(\mathcal{D})).
    \] 
\end{Prp}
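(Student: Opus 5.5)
Since $\mathcal{D}\subseteq\subcat$ and $\subcat=\text{Prod}(\subcat)$, we have $\text{Prod}(\mathcal{D})\subseteq\subcat$, so both mutations make sense once we know that $\text{Prod}(\mathcal{D})$ is good covariantly finite in $\subcat$. The plan has two steps. First, I will show that a left $\mathcal{D}$-approximation which is an $\additiveReal$-inflation is automatically a left $\text{Prod}(\mathcal{D})$-approximation. Second, I will compare the resulting cones with the cones of arbitrary left $\text{Prod}(\mathcal{D})$-approximations, using the argument of \cite[Remark~4.10(1)]{adachi2025assortment} mentioned in Remark~\ref{rmk: product-mutation}.

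\emph{Step 1.} Fix $S\in\subcat$ and a left $\mathcal{D}$-approximation $f:S\to D$ fitting into an $\additiveReal$-conflation $S\xrightarrow{f}D\to N_S$. Every object of $\text{Prod}(\mathcal{D})$ is a summand of a product $\prod_{j}D_j$ with $D_j\in\mathcal{D}$. Let $h:S\to \prod_j D_j$ be a morphism. Each component $\pi_jh$ factors through $f$ by the approximation property, and the universal property of the product assembles these factorizations into a factorization of $h$ through $f$. Now let $h:S\to D'$ with $D'$ a summand of $P=\prod_jD_j$, via a section $\iota:D'\to P$ and a retraction $p:P\to D'$. Factor $\iota h=uf$; then $h=p\iota h=(pu)f$. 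Hence $f$ is a left $\text{Prod}(\mathcal{D})$-approximation which is an $\additiveReal$-inflation, and $\text{Prod}(\mathcal{D})$ is a good covariantly finite subcategory of $\subcat$.

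\emph{Step 2.} By Step 1, the same conflations $S\to D\to N_S$ may be used to compute $\prod\nolimits^L(\subcat;\text{Prod}(\mathcal{D}))$, which gives $\text{Prod}(\text{Prod}(\mathcal{D})\cup\{N_S\}_{S\in\subcat})=\text{Prod}(\mathcal{D}\cup\{N_S\}_{S\in\subcat})$. This equals $\prod\nolimits^L(\subcat;\mathcal{D})$ provided the left mutation is independent of the chosen approximation, so I will verify that independence directly.

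Let $S\xrightarrow{f'}D'\to N'$ be another conflation with $f'$ a left $\text{Prod}(\mathcal{D})$-approximation. I will show that $N'\oplus D\simeq N_S\oplus D'$. Since $f$ and $f'$ approximate one another, there are morphisms $a:D\to D'$ and $b:D'\to D$ with $af=f'$ and $bf'=f$. By (ET3), $a$ induces a morphism of conflations with a third component $c:N_S\to N'$. I then expect a homotopy-cartesian square argument in the style of \cite[Remark~4.10(1)]{adachi2025assortment}, using \cite[Proposition~3.15]{nakaoka2019extriangulated}, to produce a conflation $D\to D'\oplus N_S\to N'$, whose extension class is the pullback of the class of $S\to D'\to N'$ along $N'\to\ldots$ and so ends up in $\extFunc(N',D)$.

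By Lemma~\ref{lmm: good}, $N'\in{}^{\perp_{>0}}\text{Prod}(\mathcal{D})$, so $\extFunc(N',D)=0$ and this conflation splits, giving $D'\oplus N_S\simeq D\oplus N'$. Closure under summands then yields $N'\in\text{Prod}(\mathcal{D}\cup\{N_S\})$, and symmetrically $N_S\in\text{Prod}(\text{Prod}(\mathcal{D})\cup\{N'\})$. Letting $S$ range over $\subcat$ gives the equality of the two Prod-closures.

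The main obstacle is building the conflation $D\to D'\oplus N_S\to N'$ in the extriangulated setting. There is no honest cone functor here, so this must come from the extriangulated octahedral-type axioms, namely the dual of \cite[Proposition~3.15]{nakaoka2019extriangulated}, applied to the inflations $f$ and $f'$ out of $S$. If that route is awkward, a fallback is to replace $f'$ by $\binom{f}{f'}:S\to D\oplus D'$, which is still an inflation and an approximation, and compare both $f$ and $f'$ to it. Each comparison reduces to adding a split summand $D'$ or $D$ to the cone, which can be checked by direct-sum conflations together with (ET4).
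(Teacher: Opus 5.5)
Your Step~1 is the paper's first step, with the retraction detail written out. Your main line in Step~2 is also the paper's argument: apply \cite[Proposition~3.15]{nakaoka2019extriangulated} to the two inflations $f,f'$ out of $S$, then split using vanishing of $\extFunc$.

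One correction is needed there. That proposition does not give you a conflation $D\to D'\oplus N_S\to N'$ directly. It gives an object $E$ together with two conflations, $D\to E\to N'$ and $D'\to E\to N_S$, and both must be split. You only split the first. The second needs $\extFunc(N_S,D')=0$ for $D'\in\text{Prod}(\mathcal{D})$.
\begin{itemize}
\item The paper obtains this from $N_S\in{}^{\perp_1}\mathcal{D}={}^{\perp_1}\text{Prod}(\mathcal{D})$, using Remark~\ref{rmk: products}(1).
\item You can get it more directly from Lemma~\ref{lmm: good} applied to $f$, which your Step~1 shows is a left $\text{Prod}(\mathcal{D})$-approximation.
\item If you really want the conflation $D\to D'\oplus N_S\to N'$, use the dual of the homotopy-cartesian corollary to that proposition. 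Apply it to the (ET3)-morphism $(1_S,a,c)$, where $\delta=c^\ast\delta'$ and $\delta,\delta'$ are the classes of the two conflations. The resulting class is the pushforward $f_\ast\delta'\in\extFunc(N',D)$, not a pullback, and then only $\extFunc(N',D)=0$ is needed.
\end{itemize}

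Your fallback is a genuinely different argument, and it is complete. From $f'=af$ and $f=bf'$ you get
$\binom{f}{f'}=\begin{pmatrix}1&0\\a&1\end{pmatrix}\binom{f}{0}=\begin{pmatrix}1&b\\0&1\end{pmatrix}\binom{0}{f'}$.
Realizations are additive and stable under isomorphisms of the middle term. So $\binom{f}{f'}$ is an $\additiveReal$-inflation with cone $N_S\oplus D'$, and also with cone $D\oplus N'$. Cones of a fixed inflation are unique up to isomorphism, by (ET3) together with the two-out-of-three property for morphisms of conflations. Hence $N_S\oplus D'\simeq D\oplus N'$. Neither (ET4) nor any $\extFunc$-vanishing is used.

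This route buys generality. It needs neither Lemma~\ref{lmm: good} nor the presilting hypothesis $\subcat\subseteq{}^{\perp_{>0}}\subcat$, only the mutual factorization of the two approximations. It shows that cones of any two left approximations by an additive class differ only by summands from that class. The paper's route instead relies on the orthogonality already built into the presilting setting, which Lemma~\ref{lmm: good} supplies.
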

\begin{proof}
    It follows from the universal property of the product that any left $\mathcal{D}$-approximation of an object is a left $\text{Prod}(\mathcal{D})$-approximation of that object. Moreover, since $\subcat=\text{Prod}(\subcat)$ by assumption, $\text{Prod}(\mathcal{D})$ is a good covariantly finite subcategory of $\subcat$. In particular, we have that $\prod\nolimits^L(\subcat;\mathcal{D})\subseteq \prod\nolimits^L(\subcat;\text{Prod}(\mathcal{D}))$. To prove the opposite inclusion, consider an $\additiveReal$-conflation $S\xrightarrow[]{f'}D'\to T'$ such that $f'$ is a left $\text{Prod}(\mathcal{D})$-approximation of an object $S\in\subcat$. By definition, $\prod\nolimits^L(\subcat;\mathcal{D})$ contains $\text{Prod}(\mathcal{D})$, so it suffices to show that $T'\in \prod\nolimits^L(\subcat;\mathcal{D})$. Since $\mathcal{D}$ is a good covariantly finite subcategory of $\subcat$, there exists an $\additiveReal$-conflation $S\xrightarrow[]{f}D\to T$ such that $f$ is a left $\mathcal{D}$-approximation of $S$. By \cite[Proposition~3.15]{nakaoka2019extriangulated}, we have the commutative diagram of $\additiveReal$-conflations
    \begin{center}
        \begin{tikzcd}
            S \arrow[]{d}[swap]{f'} \arrow[]{r}[]{f} &D \arrow[]{d}[]{} \arrow[]{r}[]{} &T \arrow[equals]{d}[]{} \\
            D' \arrow[]{d}[]{} \arrow[]{r}[]{} &E \arrow[]{d}[]{} \arrow[]{r}[]{} &T. \\
            T' \arrow[equals]{r}[]{} &T'
        \end{tikzcd}
    \end{center}
    Note that $T\in{}^{\perp_{>0}}\mathcal{D}\subseteq {}^{\perp_1}\mathcal{D}$ and $T'\in{}^{\perp_1}\text{Prod}(\mathcal{D})$ by Lemma~\ref{lmm: good}. Moreover, ${}^{\perp_1}\mathcal{D}={}^{\perp_1}\text{Prod}(\mathcal{D})$ by Remark~\ref{rmk: products}(1) and the fact that taking orthogonals reverses inclusions. Thus, both $\additiveReal$-conflations with middle term $E$ split, by which $T\bigoplus D'\simeq T'\bigoplus D$. Since $D',T\in\prod\nolimits^L(\subcat;\mathcal{D})$, which is closed under products and summands in $\extriangulatedCat$, $T'\in\prod\nolimits^L(\subcat;\mathcal{D})$.
\end{proof}

Regarding right product-mutation, we cannot assume in general that the contravariantly finite subcategories with respect to which we perform the mutation are product-closed, although the following condition is sufficient.

\begin{Prp}[]\label{prp: right invariance}
    Let $\mathcal{D}$ be a good contravariantly finite subcategory of $\subcat$ such that $\mathcal{D}^{\perp_1}=\text{Prod}(\mathcal{D})^{\perp_1}$. Then $\text{Prod}(\mathcal{D})$ is a good contravariantly finite subcategory of $\subcat$ and
    \[
        \prod\nolimits^R(\subcat;\mathcal{D})=\prod\nolimits^R(\subcat;\text{Prod}(\mathcal{D})).
    \] 
\end{Prp}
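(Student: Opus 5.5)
We dualize the proof of Proposition~\ref{prp: left invariance}, with one point where products do not automatically cooperate. First we show that any right $\mathcal{D}$-approximation $g\colon D\to S$ of $S\in\subcat$ is also a right $\text{Prod}(\mathcal{D})$-approximation. The universal property of the product goes the wrong way here, so we use the hypothesis $\mathcal{D}^{\perp_1}=\text{Prod}(\mathcal{D})^{\perp_1}$ instead.

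Take the $\additiveReal$-conflation $N_S\to D\xrightarrow{g}S$. By Lemma~\ref{lmm: good}, applied in its dual form to the deflation $g$ (equivalently, by the same long exact sequence argument using $\subcat\subseteq\subcat^{\perp_{>0}}$), we get $N_S\in\mathcal{D}^{\perp_1}$. Concretely, applying $\text{Hom}_\extriangulatedCat(\mathcal{D},-)$ gives surjectivity of $\text{Hom}(\mathcal{D},g)$ together with $\extFunc(\mathcal{D},D)=0$, and hence $\extFunc(\mathcal{D},N_S)=0$. By the hypothesis, $N_S\in\text{Prod}(\mathcal{D})^{\perp_1}$.

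Now apply $\text{Hom}_\extriangulatedCat(D'',-)$ for $D''\in\text{Prod}(\mathcal{D})$. This gives the exact sequence $\text{Hom}(D'',D)\to\text{Hom}(D'',S)\to\extFunc(D'',N_S)=0$, so $g$ is a right $\text{Prod}(\mathcal{D})$-approximation. Since $\text{Prod}(\mathcal{D})\subseteq\subcat=\text{Prod}(\subcat)$, it follows that $\text{Prod}(\mathcal{D})$ is a good contravariantly finite subcategory of $\subcat$. Moreover, the same conflations may be used for both subcategories, which yields
\[
\prod\nolimits^R(\subcat;\mathcal{D})\subseteq\prod\nolimits^R(\subcat;\text{Prod}(\mathcal{D})).
\]

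For the reverse inclusion, let $N'\to D'\xrightarrow{g'}S$ be an $\additiveReal$-conflation in which $g'$ is any right $\text{Prod}(\mathcal{D})$-approximation. Since $\text{Prod}(\mathcal{D})$ lies in the left-hand side, it suffices to show that $N'$ lies in the left-hand side. Using the dual of \cite[Proposition~3.15]{nakaoka2019extriangulated} on the two deflations $g$ and $g'$ to $S$, we obtain a diagram with an object $E$ and $\additiveReal$-conflations $N_S\to E\to D'$ and $N'\to E\to D$.

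The main step is showing that both conflations split. We have $N_S\in\mathcal{D}^{\perp_1}=\text{Prod}(\mathcal{D})^{\perp_1}$, so $\extFunc(D',N_S)=0$. By the dual of Lemma~\ref{lmm: good}, now applied to $\text{Prod}(\mathcal{D})$ and $g'$, we get $N'\in\text{Prod}(\mathcal{D})^{\perp_1}\subseteq\mathcal{D}^{\perp_1}$, so $\extFunc(D,N')=0$. Hence $N_S\oplus D'\simeq E\simeq N'\oplus D$.

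The object $N_S\oplus D'$ lies in $\prod\nolimits^R(\subcat;\mathcal{D})$, which is closed under summands and products. Therefore $N'$ lies in it as a summand of $N'\oplus D$. This proves the reverse inclusion and hence the equality.
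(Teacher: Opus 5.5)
Your proof is correct and follows the paper's argument: you use the dual of Lemma~\ref{lmm: good} together with the hypothesis $\mathcal{D}^{\perp_1}=\text{Prod}(\mathcal{D})^{\perp_1}$ to show that a right $\mathcal{D}$-approximation is already a right $\text{Prod}(\mathcal{D})$-approximation. You then get the reverse inclusion from the splitting argument dual to Proposition~\ref{prp: left invariance}, which you write out explicitly where the paper only says it proceeds analogously.
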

\begin{proof}
    Let $S\in\subcat$. By hypothesis, there exists an $\additiveReal$-conflation $N\to D\xrightarrow[]{g}S$, where $g$ is a right $\mathcal{D}$-approximation of $S$. Now, let $D'\in\text{Prod}(\mathcal{D})$. By applying $\text{Hom}_\extriangulatedCat(D',-)$ to the previous $\additiveReal$-conflation we obtain the exact sequence
    \[
    \text{Hom}_\extriangulatedCat(D',N)\to \text{Hom}_\extriangulatedCat(D',D)\xrightarrow[]{\text{Hom}_\extriangulatedCat(D',g)} \text{Hom}_\extriangulatedCat(D',S)\to \extFunc(D',N).
    \] 
    By the dual of Lemma~\ref{lmm: good}, we know that $N\in\mathcal{D}^{\perp_{>0}}\subseteq\mathcal{D}^{\perp_1}$. In particular, $N\in\text{Prod}(\mathcal{D})^{\perp_1}$ by our hypothesis, which implies that the rightmost term in the previous sequence vanishes, by which $\text{Hom}_\extriangulatedCat(D',g)$ is surjective. Thus, $g$ is a right $\text{Prod}(\mathcal{D})$-approximation of $S$. Since $\subcat=\text{Prod}(\subcat)$, it follows that $\text{Prod}(\mathcal{D})$ is a good contravariantly finite subcategory of $\subcat$, and in particular we have that $\prod\nolimits^R(\subcat;\mathcal{D})\subseteq \prod\nolimits^R(\subcat;\text{Prod}(\mathcal{D}))$. To prove the opposite inclusion, we proceed analogously to the proof of Proposition~\ref{prp: left invariance}, using once again (the dual of) Lemma~\ref{lmm: good} and the hypothesis that $\mathcal{D}^{\perp_1}=\text{Prod}(\mathcal{D})^{\perp_1}$.
\end{proof}

\noindent We note that this and other asymmetries arising in large silting mutation often stem from the fact that, in general, families of left and of right approximations behave differently under taking products, as well as under taking coproducts.

We are now ready to prove the main results of this section, which are dual large analogues of \cite[Lemma~4.11,~Theorem~4.12~\&~Proposition~4.13]{adachi2025assortment}. Although the proofs are the same in spirit, we include them here for completeness.

\begin{Lmm}[]\label{lmm: product-mutation}
    Let $\mathcal{D}$ be a good contravariantly finite subcategory of $\subcat$ such that $\mathcal{D}^{\perp_1}=\text{Prod}(D)^{\perp_1}$. Then, for each $N\in\prod\nolimits^R(\subcat;\mathcal{D})$, there exists an $\additiveReal$-conflation $N\xrightarrow[]{f_N}D_N\xrightarrow[]{g_N}S_N$ such that $S_N\in\subcat$, $g_N$ is a right $\text{Prod}(\mathcal{D})$-approximation of $S_N$ and $f_N$ is a left $\text{Prod}(\mathcal{D})$-approximation of $N$. In particular, $\text{Prod}(\mathcal{D})$ is a good covariantly finite subcategory of $\prod\nolimits^R(\subcat;\mathcal{D})$.
\end{Lmm}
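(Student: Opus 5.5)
By Proposition~\ref{prp: right invariance}, $\text{Prod}(\mathcal{D})$ is a good contravariantly finite subcategory of $\subcat$ and $\prod\nolimits^R(\subcat;\mathcal{D})=\prod\nolimits^R(\subcat;\text{Prod}(\mathcal{D}))$. So I will write $\mathcal{D}$ for $\text{Prod}(\mathcal{D})$ and assume $\text{Prod}(\mathcal{D})=\mathcal{D}$ throughout. Let $\mathcal{N}$ be the class of objects $N$ admitting an $\additiveReal$-conflation $N\xrightarrow{f_N}D_N\xrightarrow{g_N}S_N$ with $S_N\in\subcat$, $D_N\in\mathcal{D}$, $g_N$ a right $\mathcal{D}$-approximation and $f_N$ a left $\mathcal{D}$-approximation. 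The plan is to show that $\mathcal{N}$ contains $\mathcal{D}$ and every $N_S$, and that $\mathcal{N}$ is closed under products and summands. Then $\mathcal{N}\supseteq\prod\nolimits^R(\subcat;\mathcal{D})$, which gives the claim, and the ``in particular'' follows at once.

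\textbf{Generators.} For $D\in\mathcal{D}$, take the split conflation $D\xrightarrow{1}D\to 0$; both approximation conditions are trivial. For $S\in\subcat$, take the defining conflation $N_S\to D\xrightarrow{g}S$ with $g$ a right $\mathcal{D}$-approximation. Applying $\text{Hom}_\extriangulatedCat(-,D')$ for $D'\in\mathcal{D}$ gives the exact sequence $\text{Hom}(D,D')\to\text{Hom}(N_S,D')\to\extFunc(S,D')$. The last term vanishes because $\subcat$ is presilting and $\mathcal{D}\subseteq\subcat$, so $N_S\to D$ is a left $\mathcal{D}$-approximation.

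\textbf{Products.} Given a family $N_j\to D_j\to S_j$ in $\mathcal{N}$, exact products produce a conflation $\prod N_j\to\prod D_j\to\prod S_j$. Here $\prod D_j\in\mathcal{D}$ and $\prod S_j\in\subcat$. The map $\prod g_j$ is a right $\mathcal{D}$-approximation by the universal property of the product, componentwise. For the left approximation I will not argue componentwise, since left approximations need not behave under products. Instead I use the same Hom-sequence as for the generators: $\text{Hom}(\prod D_j,D')\to\text{Hom}(\prod N_j,D')\to\extFunc(\prod S_j,D')=0$, which vanishes since $\prod S_j\in\subcat$.

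\textbf{Summands.} This is the main obstacle. Suppose $N\oplus N'\in\mathcal{N}$ via $N\oplus N'\to D_0\to S_0$. I would imitate the trick from Proposition~\ref{prp: left invariance}. First produce a conflation $N\to D\to T$ in which $N\to D$ is a left $\mathcal{D}$-approximation and an $\additiveReal$-inflation: compose $N\to N\oplus N'\to D_0$ and add the split inflation $N\oplus N'\to$ (appropriately), e.g.\ use $N\to D_0\oplus$ something, or take the conflation obtained from the summand of the split conflation $N\to N\oplus N'\to N'$ via \cite[Proposition~3.15]{nakaoka2019extriangulated}. Then check that $T\in\subcat$ and that $D\to T$ is a right $\mathcal{D}$-approximation. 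The latter should follow from the dual of Lemma~\ref{lmm: good} once $N\in\mathcal{D}^{\perp_1}$; that holds because $\mathcal{D}^{\perp_1}$ is closed under summands and $N\oplus N'\in\mathcal{D}^{\perp_1}$ by Lemma~\ref{lmm: good}.

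To get $T\in\subcat$, I would take the cone of the inflation $N\to N\oplus N'\to D_0$. An octahedral-type diagram relates this cone to $N'\oplus$ (something) and $S_0$. Adding the conflation $N'\to D_0'\to S_0'$ obtained symmetrically and summing, the cones $T\oplus T'$ should be isomorphic to $S_0\oplus D$-type objects in $\subcat$ via the splitting argument of Proposition~\ref{prp: left invariance}, using Lemma~\ref{lmm: good} to kill the relevant $\extFunc$-groups. Since $\subcat$ is closed under summands, this yields $T\in\subcat$. Making this summand argument rigorous, in particular producing an inflation out of a summand without weak idempotent completeness, is where I expect the real work to lie.
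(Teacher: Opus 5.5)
Your reduction to $\text{Prod}(\mathcal{D})=\mathcal{D}$, the closure-class organization, and the generator and product steps are correct. The paper instead writes $N$ as a summand of $N'=\prod_j D_j\oplus\prod_k N_{S_k}$ and passes to the summand once, so both routes hinge on the same point. The gap is the summand step, which you leave at ``should be isomorphic''. Two things need fixing there.

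First, there is no idempotent-completeness obstruction. The map $\iota_N\colon N\to N\oplus N'$ is a split $\additiveReal$-inflation, so by the octahedral axiom (ET4) the composite $f_0\iota_N$ is an $\additiveReal$-inflation. It is also a left $\mathcal{D}$-approximation, since $h=(h\pi_N)\iota_N$ and $h\pi_N$ factors through $f_0$.

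Second, the octahedral diagram alone does not give $T\in\subcat$, where $T$ is the cone of $f_0\iota_N$. It only gives a conflation $N'\to T\to S_0$, which need not split. For example, for $N\oplus N'=N_S\oplus N_S$ one finds $T\simeq S\oplus D$ and a conflation $N_S\to S\oplus D\to S\oplus S$, and in general $S\oplus D\not\simeq N_S\oplus S\oplus S$. The paper's proof gets past this point by asserting that the complementary summand lies in $\text{Prod}(\mathcal{D})$. That is not justified: the complement only lies in $\prod\nolimits^R(\subcat;\mathcal{D})$. So a separate argument for $T\in\subcat$ is genuinely needed.

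Here is one that works. Let $M=N\oplus N'$ with conflation $M\xrightarrow{f_0}D_0\xrightarrow{g_0}S_0$, and let $T'$ be the cone of $f_0\iota_{N'}$.
\begin{enumerate}
\item The direct sum of the two conflations with middle term $D_0$ is a conflation $M\xrightarrow{\phi}D_0\oplus D_0\to T\oplus T'$, where $\phi=\left(\begin{smallmatrix}f_0\iota_N\pi_N\\ f_0\iota_{N'}\pi_{N'}\end{smallmatrix}\right)$.
\item Since $f_0$ is a left $\mathcal{D}$-approximation, choose $u\colon D_0\to D_0$ with $uf_0=f_0\iota_{N'}\pi_{N'}$.
\item The automorphism $\left(\begin{smallmatrix}1&0\\-u&1\end{smallmatrix}\right)\left(\begin{smallmatrix}1&1\\0&1\end{smallmatrix}\right)$ of $D_0\oplus D_0$ carries $\phi$ to $\left(\begin{smallmatrix}f_0\\0\end{smallmatrix}\right)$, whose cone is $S_0\oplus D_0$.
\item Cones of an inflation are unique up to isomorphism, so $T\oplus T'\simeq S_0\oplus D_0\in\subcat$, and hence $T\in\subcat$.
\end{enumerate}

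Alternatively, you can run the splitting argument of Proposition~\ref{prp: left invariance} on the two inflations $f_0$ and $\phi$ out of $M$, which gives $T\oplus T'\oplus D_0\simeq S_0\oplus D_0\oplus D_0$. In that route, the vanishing $\extFunc(T\oplus T',\mathcal{D})=0$ comes from the $\text{Hom}$--$\extFunc$ sequence and the approximation property. It does not come from Lemma~\ref{lmm: good}, because that lemma requires the left term to lie in $\subcat$, which is not available for $M$.

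You already observed that $\extFunc(\mathcal{D},N)$ is a summand of $\extFunc(\mathcal{D},M)=0$, which makes $D_0\to T$ a right $\mathcal{D}$-approximation. Combined with the argument above, this closes your class under summands and completes the proof.
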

\begin{proof}
    Let $N\in\prod\nolimits^R(\subcat;\mathcal{D})$. By our hypothesis and Proposition~\ref{prp: right invariance}, $N\in\prod\nolimits^R(\subcat;\text{Prod}(\mathcal{D}))$. Thus, $N$ is a summand of some $N'$, which is itself a product of objects in $\mathcal{D}$ and cocones of right $\text{Prod}(\mathcal{D})$-approximations of objects in $\subcat$. Let $\{\mathcal{D}_j\}_{j\in J}\subseteq\mathcal{D}$ and $\{N_{S_k}\xrightarrow[]{f_k}D_k\xrightarrow[]{g_k}S_k\}_{k\in K}$ be a family of $\additiveReal$-conflations such that $N'=\prod_{j\in J}D_j\bigoplus\prod_{k\in K}N_{S_k}$, where $g_k$ is a right $\text{Prod}(\mathcal{D})$-approximation of $S_k\in\subcat$ for each $k\in K$. Since $\extriangulatedTriple$ has exact products, by considering the trivial $\additiveReal$-conflation given by the identity morphism $\text{Id}_{D_j}$ for each $j\in J$, we obtain the $\additiveReal$-conflation 
    \[
        N'\xrightarrow[]{\prod_{j\in J}\text{Id}_{D_j}\oplus \prod_{k\in K}f_k} \prod_{j\in J}D_j\bigoplus\prod_{k\in K}D_k \xrightarrow[]{0^J\oplus\prod_{j\in K}g_k}\prod_{k\in K}S_k,
    \] 
    which we denote by $N'\xrightarrow[]{f_{N'}}D_{N'}\xrightarrow[]{g_{N'}} S_{N'}$. Note that, by assumption, $S_{N'}\in\text{Prod}(\subcat)=\subcat$.

    Consider the split $\additiveReal$-conflation $N\to N'\to N''$, given by the canonical inclusion. By the extriangulated octahedral axiom, there exists a commutative diagram of $\additiveReal$-conflations
    \begin{center}
        \begin{tikzcd}
            N\arrow[equals]{d}[]{}\arrow[]{r}[]{} &N'\arrow[]{d}[]{f_{N'}}\arrow[]{r}[]{} &N''\arrow[]{d}[]{} \\
            N\arrow[]{r}[]{f_N} &D_{N'}\arrow[]{d}[]{g_{N'}}\arrow[]{r}[]{g_N} &S_N \arrow[]{d}[]{} \\
                                            &S_{N'}\arrow[equals]{r}[]{} &S_{N'}.
        \end{tikzcd}
    \end{center}
    Since $N''\in\text{Prod}(\mathcal{D})\subseteq\subcat$, we have that $\extFunc(S_{N'},N'')=0$. Hence, $S_N\simeq N''\bigoplus S_{N'}$, which implies that $S_N\in\subcat$. Now, let $D'\in\text{Prod}(\mathcal{D})$. By applying $\text{Hom}_\extriangulatedCat(D',-)$ to the middle row of the previous diagram, we obtain the exact sequence
    \[
        \text{Hom}_\extriangulatedCat(D',D_{N'})\xrightarrow[]{\text{Hom}_\extriangulatedCat(D',g_N)}\text{Hom}_\extriangulatedCat(D',S_N)\to \extFunc(D',N).
    \] 
    where the rightmost term is a summand of $\extFunc(D',N')$ due to the additivity of $\extFunc(D',-)$. Since $\text{Prod}(\mathcal{D})^{\perp_1}$ is closed under products in $\extriangulatedCat$ by Remark~\ref{rmk: products}(1), it follows that $\extFunc(D',N')$ \textemdash \ hence, $\extFunc(D',N)$ \textemdash \ vanishes, and by the above exact sequence, that $g_N$ is a right $\text{Prod}(\mathcal{D})$-approximation of $S_N$. The dual of Lemma~\ref{lmm: good} then implies that $f_N$ is a left $\text{Prod}(\mathcal{D})$-approximation of $N$.
\end{proof}

\begin{Thm}[]\label{thm: mutation}\leavevmode
    Let $\extriangulatedTriple$ be an extriangulated category with positive extensions and exact products, $\subcat$ be a presilting subcategory of $\extriangulatedCat$ and $\mathcal{D}$ be a good contravariantly finite subcategory of $\subcat$ such that $\text{Prod}(\subcat)=\subcat$ and $\text{Prod}(\mathcal{D})=\mathcal{D}$. Then the following statements hold. 
    \begin{enumerate}[label=\ThmLbl]
        \item $\prod\nolimits^R(\subcat;\mathcal{D})$ is a product-closed presilting subcategory of $\extriangulatedCat$.
        \item $\prod\nolimits^R(\subcat;\mathcal{D})\geqslant\subcat$, where the equality holds if, and only if, $\subcat=\mathcal{D}$.
        \item If $\subcat$ is a silting subcategory of $\extriangulatedCat$, then so is $\prod\nolimits^R(\subcat;\mathcal{D})$.
        \item $\mathcal{D}$ is a good covariantly finite subcategory of $\prod\nolimits^R(\subcat;\mathcal{D})$ and
            \[
                \prod\nolimits^L\big(\prod\nolimits^R(\subcat;\mathcal{D});\mathcal{D}\big)=\subcat.
            \] 
    \end{enumerate}
\end{Thm}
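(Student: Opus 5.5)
Write $\mathcal{N}:=\prod\nolimits^R(\subcat;\mathcal{D})$ and follow the pattern of \cite[Theorem~4.12]{adachi2025assortment}. Since $\text{Prod}(\mathcal{D})=\mathcal{D}$, the hypothesis $\mathcal{D}^{\perp_1}=\text{Prod}(\mathcal{D})^{\perp_1}$ holds trivially. Hence Lemma~\ref{lmm: product-mutation} applies: every $N\in\mathcal{N}$ sits in an $\additiveReal$-conflation $N\xrightarrow{f_N}D_N\xrightarrow{g_N}S_N$ with $S_N\in\subcat$, $g_N$ a right $\mathcal{D}$-approximation and $f_N$ a left $\mathcal{D}$-approximation. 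This conflation is the basic tool throughout.

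\emph{Step (a).} $\mathcal{N}$ is closed under products and summands by definition, so only $\mathcal{N}\subseteq{}^{\perp_{>0}}\mathcal{N}$ remains. By Remark~\ref{rmk: products}(1), $X^{\perp_{>0}}$ is product-closed, and it is closed under summands by additivity. Moreover ${}^{\perp_{>0}}X$ is closed under summands, and I expect to show it is product-closed via exact products. It therefore suffices to check $\extFunc^n(A,B)=0$ for $n>0$ and $A,B$ among the generators $\mathcal{D}\cup\{N_S\}$.
\begin{itemize}
\item Pairs within $\mathcal{D}$ vanish because $\mathcal{D}\subseteq\subcat$ and $\subcat$ is presilting.
\item By the dual of Lemma~\ref{lmm: good}, $N_S\in\mathcal{D}^{\perp_{>0}}\cap\subcat^{\perp_{>1}}\cap{}^{\perp_{>0}}\subcat$, which gives $\extFunc^{>0}(\mathcal{D},N_S)=0$.
\item For $\extFunc^{n}(N_S,D)$, apply $\extFunc(-,D)$ to $N_S\to D'\to S$: the groups $\extFunc^n(D',D)$ vanish, and so do $\extFunc^{n+1}(S,D)$.
\item For $\extFunc^n(N_S,N_{S'})$, apply $\extFunc(N_S,-)$ to $N_{S'}\to D'\to S'$. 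For $n\ge 1$ this gives $\extFunc^{n-1}(N_S,D')\to\extFunc^{n-1}(N_S,S')\to\extFunc^n(N_S,N_{S'})\to\extFunc^n(N_S,D')=0$. For $n\geq2$ the left term $\extFunc^{n-1}(N_S,S')$ vanishes because $N_S\in{}^{\perp_{>0}}\subcat$. For $n=1$, I need $\text{Hom}(N_S,D')\to\text{Hom}(N_S,S')$ to be surjective. Given $h:N_S\to S'$, I would use $\extFunc(S,S')=0$ to extend $h$ along $N_S\to D$ to a map $D\to S'$. That map factors through the right approximation $g'$ because $D\in\mathcal{D}$, and restricting back to $N_S$ yields the required lift.
\end{itemize}
I expect this $n=1$ case, together with the product-closure of ${}^{\perp_{>0}}$ (needed to pass from generators to products), to be the main technical point. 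If product-closure of left orthogonals is unavailable, I would instead argue directly with the conflations $N\to D_N\to S_N$ from Lemma~\ref{lmm: product-mutation}, which apply to arbitrary objects of $\mathcal{N}$ and avoid the generator reduction entirely.

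\emph{Step (b).} To see $\extFunc^{>0}(\mathcal{N},\subcat)=0$: the generators satisfy this, since $N_S\in{}^{\perp_{>0}}\subcat$ and $\mathcal{D}\subseteq\subcat$, and it passes to $\mathcal{N}$ by closure as above. If $\subcat=\mathcal{D}$, take $g=\text{id}$, so $N_S=0$ and $\mathcal{N}=\mathcal{D}$. Conversely, suppose $\mathcal{N}=\subcat$. Then for $S\in\subcat$ the conflation $N_S\to D\to S$ has $\extFunc(S,N_S)=0$, so it splits and $S$ is a summand of $D$, giving $S\in\mathcal{D}$.

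\emph{Step (c).} Each $S$ has $S\in\mathcal{N}\ast\mathcal{N}$ by the cone of $N_S\to D$, so $\subcat\subseteq\text{thick}(\mathcal{N})$ and hence $\text{thick}(\mathcal{N})=\extriangulatedCat$.

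\emph{Step (d).} Goodness is the last clause of Lemma~\ref{lmm: product-mutation}. For the equality, $\prod\nolimits^L(\mathcal{N};\mathcal{D})$ is generated by $\mathcal{D}$ and the cones $S_N$, all of which lie in $\subcat$, so it is contained in $\subcat$. Conversely, for $S\in\subcat$ the conflation $N_S\to D\to S$ is, by the dual of Lemma~\ref{lmm: good}, a left $\mathcal{D}$-approximation conflation of $N_S$. By independence of the choice of approximation (Remark~\ref{rmk: product-mutation}), $S$ therefore lies in the left mutation, which gives the reverse inclusion.
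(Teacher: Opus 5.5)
There is a genuine gap in (a), and the same step is reused in (b). Your main route verifies vanishing on the generators $\mathcal{D}\cup\{N_S\}_{S\in\subcat}$ and then passes to $\mathcal{N}=\text{Prod}(\mathcal{D}\cup\{N_S\})$ by closure properties that are not available here. To conclude, you would need $\extFunc^n(\prod_j G_j,\prod_k G'_k)=0$ for $n>0$, and neither variable can be controlled from the generators.

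In the first variable, ${}^{\perp_{>0}}X$ is in general not closed under products, and exact products do not change this. For example, ${}^{\perp_{>0}}\text{Mod}(\mathbb{Z})$ is the class of free abelian groups, which does not contain $\mathbb{Z}^{\mathbb{N}}$. In the second variable, Remark~\ref{rmk: products}(1) only concerns $\extFunc=\extFunc^1$. Product-closure of $X^{\perp_{>0}}$ needs the monomorphisms of Condition~\ref{cdt: mono} for $n\ge 2$, which is not a hypothesis of this theorem. The paper imposes it separately in Lemma~\ref{lmm: mutation} and in Theorem~\ref{thm: A}(b). So, as written, neither (a) nor the inequality $\mathcal{N}\geqslant\subcat$ in (b) is established.

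Your fallback is the correct argument and is exactly the paper's proof, so it should replace the generator reduction rather than serve as a contingency. The key point is that Lemma~\ref{lmm: product-mutation} applies to \emph{every} $N\in\mathcal{N}$. Its proof only needs product-closure of $\mathcal{D}^{\perp_1}$, which Remark~\ref{rmk: products}(1) does supply. The dual of Lemma~\ref{lmm: good} then upgrades this $\extFunc^1$-level information to $N\in\mathcal{D}^{\perp_{>0}}\cap\subcat^{\perp_{>1}}\cap{}^{\perp_{>0}}\subcat$ for all $N\in\mathcal{N}$, not just for generators. The last membership is precisely $\mathcal{N}\geqslant\subcat$, which gives (b).

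For (a), take $N,N'\in\mathcal{N}$ and $n>0$, and apply $\text{Hom}_\extriangulatedCat(-,N')$ to $N\to D_N\to S_N$. This gives the exact sequence $\extFunc^n(D_N,N')\to\extFunc^n(N,N')\to\extFunc^{n+1}(S_N,N')$. Both outer terms vanish because $N'\in\mathcal{D}^{\perp_{>0}}\cap\subcat^{\perp_{>1}}$, so $\extFunc^n(N,N')=0$. Working in the first variable this way also makes your separate $n=1$ lifting argument unnecessary.

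The rest is fine.
\begin{itemize}
\item Your splitting argument for the equality clause of (b) only uses $\extFunc(S,N_S)=0$, so it even yields the paper's version $\subcat\geqslant\mathcal{N}\Rightarrow\subcat=\mathcal{D}$.
\item Part (d) matches the paper. Only generators need checking there because $\subcat=\text{Prod}(\subcat)$, and independence of the choice of left approximation is the comparison argument from Proposition~\ref{prp: left invariance}.
\item One slip in (c): $S$ is the cone of $N_S\to D$, so $S\in\mathcal{N}^\land_1$, not $\mathcal{N}\ast\mathcal{N}$. The conclusion $\subcat\subseteq\text{thick}(\mathcal{N})$ is unaffected.
\end{itemize}
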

\begin{proof}
    Let $\mathcal{N}:=\prod\nolimits^R(\subcat;\mathcal{D})$.
    \begin{enumerate}[label=\ThmLbl]
    
        \item Consider $N\in\mathcal{N}$. By Lemma~\ref{lmm: product-mutation}, there exists an $\additiveReal$-conflation $N\xrightarrow[]{f_N}D_N\xrightarrow[]{g_N}S_N$ such that $S_N\in\subcat$ and $g_N$ is a right $\mathcal{D}$-approximation of $S_N$. Applying $\text{Hom}_\extriangulatedCat(-,\mathcal{N})$ to the above $\additiveReal$-conflation induces, for each $n>0$, an exact sequence of the form
            \[
                \extFunc^n(D_N,\mathcal{N})\to \extFunc^n(N,\mathcal{N})\to \extFunc^{n+1}(S_N,\mathcal{N});
            \] 
            moreover, since Lemma~\ref{lmm: product-mutation} can be applied to all objects in $\mathcal{N}$, we have that both the leftmost and rightmost terms vanishes by the dual of Lemma~\ref{lmm: good}. Since $\mathcal{N}=\text{Prod}(\mathcal{N})$ by definition, it follows that $\mathcal{N}$ is a product-closed presilting subcategory of $\extriangulatedCat$.

        \item By the dual of Lemma~\ref{lmm: good}, it follows that $\mathcal{N}\geqslant\subcat$. 

            Now, suppose that $\subcat\geqslant\mathcal{N}$. In particular, we have that $\extFunc(\subcat,\mathcal{N})=0$. By hypothesis, for each $S\in\subcat$, there exists an $\additiveReal$-conflation $N_S\to D\to S$ with $D\in\mathcal{D}$ and $N_S\in\mathcal{N}$, which then splits by assumption. Since $\mathcal{D}$ is closed under summands in $\extriangulatedCat$, it follows that $\subcat=\mathcal{D}$.

            On the other hand, suppose that $\subcat=\mathcal{D}$. By Lemma~\ref{lmm: product-mutation}, $\text{Prod}(\mathcal{D})$ is a good covariantly finite subcategory of $\mathcal{N}$, which implies that $\mathcal{N}\subseteq\mathcal{D}^{\perp_{>0}}$ by the dual of Lemma~\ref{lmm: good}. Thus, $\subcat\geqslant\mathcal{N}$ by assumption.

        \item Note that $\mathcal{D}$ being a good contravariantly finite subcategory of $\subcat$ implies that $\subcat\subseteq \mathcal{N}^\land_1\subseteq \text{thick}(\mathcal{N})$, by which $\text{thick}(\subcat)\subseteq \text{thick}(\mathcal{N})$. The assertion then follows from the hypothesis and (a).

        \item Since $\text{Prod}(\mathcal{D})=\mathcal{D}$, the first assertion follows from Lemma~\ref{lmm: product-mutation}. 

            Let $T\in\prod\nolimits^L(\mathcal{N};\mathcal{D})$. By the dual of Lemma~\ref{lmm: product-mutation}, there exists an $\additiveReal$-conflation $N\xrightarrow[]{f}D\to T$ such that $N\in\mathcal{N}$ and $f$ is a left $\mathcal{D}$-approximation of $N$. By (a), we know that $\mathcal{N}$ is a presilting subcategory of $\extriangulatedCat$, which contains $\mathcal{D}$ by definition. It then follows from Lemma~\ref{lmm: good} that $T\in{}^{\perp_{>0}}\mathcal{D}$. Moreover, by Lemma~\ref{lmm: product-mutation}, $N\in\mathcal{N}$ yields an $\additiveReal$-conflation $N\xrightarrow[]{f'}D'\to S$ such that $S\in\subcat$ and $f'$ is a left $\mathcal{D}$-approximation of $N$. Since $\subcat$ is presilting and $\mathcal{D}\subseteq\subcat$, we have that $S\in{}^{\perp_{>0}}\mathcal{D}$. By proceeding as in the proof of Proposition~\ref{prp: left invariance}, we obtain $T\bigoplus D'\simeq S\bigoplus D\in\subcat$, and since $\subcat$ is closed under summands in $\extriangulatedCat$, $T\in\subcat$.

            Now, let $S\in\subcat$. By hypothesis, there exists an $\additiveReal$-conflation $N\xrightarrow[]{f}D\xrightarrow[]{g} S$ such that $g$ is a right $\mathcal{D}$-approximation of $S$, which implies that  $N\in\mathcal{N}$. By the dual of Lemma~\ref{lmm: good}, $f$ is a left $\mathcal{D}$-approximation of $N$. Moreover, by Lemma~\ref{lmm: product-mutation}, there exists an $\additiveReal$-conflation $N\xrightarrow[]{f_N}D_N\to S_N$ such that $f_N$ is a left $\mathcal{D}$-approximation of $N$. Note that $S_N\in\prod\nolimits^L(\mathcal{N};\mathcal{D})$ by definition. Once again, by proceeding as in the proof of Proposition~\ref{prp: left invariance}, we obtain $S\bigoplus D_N\simeq D\bigoplus S_N\in\prod\nolimits^L(\mathcal{N};\mathcal{D})$, which is closed under summands in $\extriangulatedCat$ by definition. Thus, $S\in\prod\nolimits^L(\mathcal{N};\mathcal{D})$.
    \end{enumerate}
\end{proof}

\subsection{Producers}\label{ssec: producers}

We prove a one-to-one correspondence between product-closed silting subcategories $\subcat$ and equivalence classes of objects $C$ which produce them, in the sense that $\subcat=\text{Prod}(C)$ (Section~\ref{sssec: bijections}). We then define an operation of mutation for such objects, showing that it is equivalent to the product-mutation of their associated product-closed silting subcategories (Section~\ref{sssec: mutation}). Let $\extriangulatedTriple$ have exact products, an $\extFunc$-injective cogenerator $Q$, and all objects with finite $\extFunc$-injective dimension.

\subsubsection{Large silting bijections}\label{sssec: bijections}

\begin{Lmm}\label{lmm: producer}
    Let $\cotorsionPair$ be a bounded hereditary complete cotorsion pair in $\extriangulatedCat$ such that $\text{Prod}(\cotorsionKernel)=\cotorsionKernel$, which is a silting subcategory of $\extriangulatedCat$ with $(\cotorsionKernel)^\land=\cotorsionfreeClass$ by Theorem~\ref{thm: AT}. Consider a finite $(\cotorsionKernel)$-resolution of $Q\in\text{Inj}_\extFunc(\extriangulatedCat)\subseteq\cotorsionfreeClass$
    \begin{align*}
        L_1\to &Z_0\to Q, \\
        L_2\to &Z_1\to L_1, \\
        L_3\to &Z_2\to L_2, \\
               &\vdots \\
        0\to &Z_m\to L_m,
    \end{align*}
    and let $K:=\bigoplus_{i=0}^mZ_i$. Then $\cotorsionKernel=\text{Prod}(K)$.
\end{Lmm}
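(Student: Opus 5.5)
Write $\subcat:=\cotorsionKernel$. The inclusion $\text{Prod}(K)\subseteq\subcat$ is immediate: each $Z_i\in\subcat$, $\subcat$ is additive, and $\text{Prod}(\subcat)=\subcat$ by hypothesis. The content is the reverse inclusion. I will reduce it to showing that $\text{Prod}(K)$ is a silting subcategory of $\extriangulatedCat$, and then conclude with Lemma~\ref{lmm: maximal}. Indeed, $\text{Prod}(K)\subseteq\subcat\subseteq{}^{\perp_{>0}}\subcat$ because $\subcat$ is presilting. So once $\text{Prod}(K)$ is silting, applying Lemma~\ref{lmm: maximal} with $\mathcal{M}=\text{Prod}(K)$ and $\mathcal{N}=\subcat$ gives $\text{Prod}(K)=\subcat$.

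Presilting is inherited from $\subcat$, and closure under summands holds by definition of $\text{Prod}$. So it remains to show $\text{thick}(\text{Prod}(K))=\extriangulatedCat$, and I proceed in three steps.

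\emph{Step 1.} Exactness of products, applied to the given conflations, produces for each set $J$ a conflation $L_{i+1}^J\to Z_i^J\to L_i^J$. Since $Z_i^J\in\text{Prod}(K)$ and $L_{m+1}=0$, descending induction on $i$ gives $L_i^J\in\text{thick}(\text{Prod}(K))$ for all $i$, and in particular $Q^J\in\text{thick}(\text{Prod}(K))$. By Remark~\ref{rmk: products}(2), $\text{Inj}_\extFunc(\extriangulatedCat)=\text{Prod}(Q)$, so every $\extFunc$-injective lies in $\text{thick}(\text{Prod}(K))$, which is closed under summands.

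\emph{Step 2.} Every object $C$ has finite $\extFunc$-injective dimension. A finite $\text{Inj}_\extFunc(\extriangulatedCat)$-coresolution of $C$ therefore exhibits $C$ as obtained from injectives by finitely many cocones. Induction on the length of this coresolution, using closure of $\text{thick}$ under cocones, gives $C\in\text{thick}(\text{Prod}(K))$.

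\emph{Step 3.} Steps 1 and 2 together give $\text{thick}(\text{Prod}(K))=\extriangulatedCat$, so $\text{Prod}(K)$ is silting, and Lemma~\ref{lmm: maximal} finishes the argument.

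The main point needing care is Step 1, namely that products of the given conflations are again conflations. This is exactly what the exact-products hypothesis (Definition~\ref{dfn: exact products}) provides. A smaller point is that $Q^J$ is a product of copies of the single object $Q$, so Remark~\ref{rmk: products}(2) indeed places every injective in $\text{Prod}(Q)$. The boundedness and completeness of $\cotorsionPair$ are used only to guarantee, via Theorem~\ref{thm: AT}, that $\subcat$ is silting with $\subcat^\land=\cotorsionfreeClass$, which is what makes the finite resolution of $Q$ exist in the first place.
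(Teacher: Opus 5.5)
Your proof is correct and follows essentially the same route as the paper. You use exact products to put every $Q^J$, hence $\text{Prod}(Q)=\text{Inj}_\extFunc(\extriangulatedCat)$, into $\text{thick}(\text{Prod}(K))$, then use finite $\extFunc$-injective dimension to get $\text{thick}(\text{Prod}(K))=\extriangulatedCat$, and conclude $\text{Prod}(K)=\cotorsionKernel$ by the maximality of silting subcategories (Lemma~\ref{lmm: maximal}); you simply spell out the descending inductions that the paper leaves implicit.
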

\begin{proof}
    Note that $Q\in\text{Prod}(K)^\land\subseteq \text{thick}(\text{Prod}(K))$ by construction of $K$. Since $\extriangulatedTriple$ has exact products, $Q^J\in\text{thick}(\text{Prod}(K))$ for any set $J$, by which $\text{Prod}(Q)\subseteq \text{thick}(\text{Prod}(K))$. Moreover, since all objects in $\extriangulatedCat$ have finite $\extFunc$-injective dimension, Remark~\ref{rmk: products}(2) implies that
    \begin{align*}
        \extriangulatedCat &= \text{Inj}_\extFunc(\extriangulatedCat)^\lor \\
                           &= \text{Prod}(Q)^\lor \\
                           &\subseteq \text{thick}(\text{Prod}(K)).
    \end{align*}
    Since $\text{Prod}(\cotorsionKernel)=\cotorsionKernel$, we have that $\text{Prod}(K)\subseteq \cotorsionKernel$, by which $\extFunc^n(\text{Prod}(K),\text{Prod}(K))$ vanishes for all $n>0$. Thus, $\text{Prod}(K)$ is a silting subcategory of $\extriangulatedCat$. Moreover, by Lemma~\ref{lmm: maximal}, silting subcategories are maximal with respect to this orthogonality property, which implies that $\text{Prod}(K)=\cotorsionKernel$. 
\end{proof}

\begin{Prp}\label{prp: bijections}
    There exist mutually inverse bijections
    \begin{center}
        \begin{tikzcd}
            \big\{ \text{objects } C \text{ in } \extriangulatedCat \mid \text{Prod}(C) \text{ is a silting subcategory of } \extriangulatedCat \big\}_{/\sim} \arrow[shift right]{d}[]{}  \arrow[phantom, very near start, xshift=-10mm, yshift=1.25mm]{d}[]{\scriptstyle [C]} \arrow[mapsto, shorten=1.5mm, xshift=-10mm, yshift=0mm]{d}[]{} \\
            \big\{ \text{silting subcategories } \subcat \text{ of } \extriangulatedCat \mid \text{Prod}(\subcat)=\subcat \big\} \arrow[shift right]{d}[swap]{\Psi} \arrow[shift right]{u}[swap]{\Theta} \arrow[phantom, very near start, xshift=-10mm, yshift=-1mm]{u}[]{\scriptstyle \text{Prod}(C)} \\
            \big\{ \text{ bounded hereditary complete cotorsion pairs } \cotorsionPair \text{ in } \extriangulatedCat \mid \text{Prod}(\cotorsionKernel)=\cotorsionKernel \big\} \arrow[shift right]{u}[swap]{\Phi},
        \end{tikzcd}
    \end{center}
    where
    \begin{align*}
        C\sim C' & \ \Leftrightarrow \text{Prod}(C)=\text{Prod}(C'); \\
        \Psi(\subcat) &:=(\subcat^\lor,\subcat^\land); \\
        \Phi\cotorsionPair &:=\cotorsionKernel.
    \end{align*}
    Finally, given a silting subcategory $\subcat$ as in the middle row, apply $\Psi$, consider any finite $\subcat$-resolution of $Q$
    \begin{align*}
        L_1\to &Z_0\to Q, \\
        L_2\to &Z_1\to L_1, \\
        L_3\to &Z_2\to L_2, \\
               &\vdots \\
        0\to &Z_m\to L_m,
    \end{align*}
    and set
    \[
        \Theta\cotorsionPair:=\bigg[\bigoplus_{i=0}^mZ_i\bigg].
    \] 
\end{Prp}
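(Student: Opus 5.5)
The bottom bijection will be obtained by restricting Theorem~\ref{thm: AT}. The top bijection will come from Lemma~\ref{lmm: producer}.

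\textbf{Bottom bijection.} Theorem~\ref{thm: AT} already gives mutually inverse bijections $\Phi,\Psi$ between bounded hereditary complete cotorsion pairs and silting subcategories. I will restrict them to the product-closed objects on both sides. If $\subcat$ is silting with $\text{Prod}(\subcat)=\subcat$, then $\Psi(\subcat)=(\subcat^\lor,\subcat^\land)$ satisfies $\Phi\Psi(\subcat)=\subcat^\lor\cap\subcat^\land=\subcat$ by Theorem~\ref{thm: AT}. Hence its core is product-closed, so $\Psi$ lands in the bottom set. Conversely, if $\cotorsionPair$ lies in the bottom set, then $\Phi\cotorsionPair=\cotorsionKernel$ is silting by Theorem~\ref{thm: AT}, and it is product-closed by hypothesis. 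The two maps remain mutually inverse because they are restrictions of mutually inverse maps.

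\textbf{Top bijection, well-definedness.} The map $[C]\mapsto\text{Prod}(C)$ is well defined and injective by the very definition of $\sim$. Its image lies in the middle set, since $\text{Prod}(C)$ is silting by assumption and $\text{Prod}(\text{Prod}(C))=\text{Prod}(C)$. For $\Theta$, take $\subcat$ product-closed silting and set $\cotorsionPair=\Psi(\subcat)$. Then $\cotorsionfreeClass=\subcat^\land=\extriangulatedCat$ is not automatic. What I do have is that boundedness gives $\cotorsionfreeClass=\subcat^\land$ and $Q\in\text{Inj}_\extFunc(\extriangulatedCat)\subseteq\cotorsionfreeClass$ by Remark~\ref{rmk: orthogonality}. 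So $Q$ admits a finite $\subcat$-resolution.

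\textbf{Top bijection, surjectivity and independence of choices.} Lemma~\ref{lmm: producer} applied to this cotorsion pair gives $\text{Prod}(K)=\cotorsionKernel=\subcat$ for $K=\bigoplus Z_i$. Two consequences follow:
\begin{itemize}
\item $K$ belongs to the top set, so $\Theta$ is defined.
\item $\text{Prod}(K)$ equals $\subcat$ for \emph{any} chosen resolution, so all choices of resolution give $\sim$-equivalent objects and $\Theta$ is well defined.
\end{itemize}
This also shows $\text{Prod}(\Theta(\subcat))=\subcat$. In the other direction, if $\text{Prod}(C)=\subcat$, then $\Theta(\text{Prod}(C))=[K]$ with $\text{Prod}(K)=\text{Prod}(C)$, so $K\sim C$. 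Hence the two maps are mutually inverse.

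\textbf{Main obstacle.} I expect the only real subtlety to be making sure the finite resolution of $Q$ exists with terms in $\cotorsionKernel$. This requires identifying $\cotorsionfreeClass$ with $(\cotorsionKernel)^\land$, which Theorem~\ref{thm: AT} supplies via the boundedness built into the bottom set. I will also check the remaining statement of Theorem~\ref{thm: B}, namely that the condition $\cotorsionClass^\land=\extriangulatedCat$ suffices once all objects have finite $\extFunc$-injective dimension. That check belongs to the subsequent lemmas, not to this proposition.
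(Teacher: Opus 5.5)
Your proposal is correct and follows the paper's proof. You restrict the bijections of Theorem~\ref{thm: AT} to the product-closed cases, then apply Lemma~\ref{lmm: producer} to $\Psi(\subcat)$, using $\subcat^\lor\cap\subcat^\land=\subcat$, to get $\text{Prod}(\bigoplus_{i=0}^m Z_i)=\subcat$ for any finite $\subcat$-resolution of $Q$. Your extra checks only make explicit what the paper leaves implicit or places in Lemma~\ref{lmm: producer}: that $\Psi$ lands in the product-closed set, that $\Theta$ does not depend on the resolution, and that $Q\in\text{Inj}_\extFunc(\extriangulatedCat)\subseteq\subcat^\land$ ensures the resolution exists.
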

\begin{proof}
    First, note that the bijections in Theorem~\ref{thm: AT} trivially restrict to
    \begin{center}
        \adjustbox{scale=1.22}{\begin{tikzcd}
        \bigg\{ \substack{\text{ bounded hereditary } \\ \text{ complete cotorsion } \\ \text{ pairs } \cotorsionPair \text{ in } \extriangulatedCat } \bigg| \substack{\text{ Prod}(\cotorsionKernel)=\cotorsionKernel } \bigg\} \arrow[shift left]{r}[]{\Phi} & \big\{ \substack{\text{ silting subcategories } \\ \subcat \text{ of } \extriangulatedCat } \mid \substack{\text{Prod}(\subcat)=\subcat } \big\} \arrow[shift left]{l}[]{\Psi}.
        \end{tikzcd}}
    \end{center}
    Now, consider a silting subcategory $\subcat$ of $\extriangulatedCat$ such that $\text{Prod}(\subcat)=\subcat$ and its corresponding bounded hereditary complete cotorsion pair $\Psi(\subcat)=(\subcat^\lor,\subcat^\land)$. Since $\subcat^\lor\cap\subcat^\land=\Phi(\subcat^\lor,\subcat^\land)=\subcat$, it follows from Lemma~\ref{lmm: producer} that the object $K:=\bigoplus_{i=0}^mZ_i$ calculated as in the above statement satisfies $\text{Prod}(K)=\subcat$. This establishes well-defined mutually inverse bijections
    \begin{center}
        \adjustbox{scale=1.2}{\begin{tikzcd}
                \big\{ \substack{\text{ silting subcategories } \\ \subcat \text{ of } \extriangulatedCat } \mid \substack{\text{Prod}(\subcat)=\subcat } \big\} \arrow[yshift=1.25mm]{rr}[]{\Theta} && \big\{ \substack{\text{ objects } C \text{ in } \extriangulatedCat } \mid \substack{ \text{Prod}(C) \text{ is a silting} \\ \text{ subcategory of } \extriangulatedCat} \big\} \arrow[mapsto, shorten=6mm, yshift=-1.25mm, xshift=2mm]{ll}[]{} \arrow[phantom, very near start, yshift=-1.25mm, xshift=1mm]{ll}[]{\scriptstyle [C]} \arrow[phantom, very near end, yshift=-1.25mm, xshift=1mm]{ll}[]{\scriptstyle \text{Prod}(C)}.
        \end{tikzcd}}
    \end{center}
\end{proof}

The following results allow us to rephrase the cotorsion pairs from Proposition~\ref{prp: bijections} as those which appear in Theorem~\ref{thm: B}.

\begin{Lmm}\label{lmm: equivalence}
    Let $\cotorsionPair$ be a bounded hereditary complete cotorsion pair in $\extriangulatedCat$. Then the following conditions are equivalent.
    \begin{enumerate}[label=\LmmLbl]
        \item $\text{Prod}(\cotorsionClass)=\cotorsionClass$.
        \item $\text{Prod}(\cotorsionKernel)=\cotorsionKernel$.
    \end{enumerate}
\end{Lmm}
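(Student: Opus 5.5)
For (a)$\Rightarrow$(b), I would use that $\cotorsionfreeClass=\cotorsionClass^{\perp_1}$ is always product-closed by Remark~\ref{rmk: products}(1). Given a family in $\cotorsionKernel$, its product lies in $\cotorsionClass$ by (a) and in $\cotorsionfreeClass$ by that remark. Closure under summands is automatic, since both classes are closed under summands (Remark~\ref{rmk: subcategories}(1)). Hence $\text{Prod}(\cotorsionKernel)=\cotorsionKernel$.

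For (b)$\Rightarrow$(a), I will use Theorem~\ref{thm: AT}. Setting $\subcat:=\cotorsionKernel$, which is silting, it gives $\cotorsionClass=\subcat^\lor$. Since $\cotorsionClass$ is closed under summands, it suffices to show that $\subcat^\lor$ is closed under products. I would prove that $\subcat^\lor_m$ is product-closed for each $m$, by induction on $m$.

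The case $m=0$ is hypothesis (b). For the inductive step, take $X_j\in\subcat^\lor_m$ with conflations $X_j\to S_j\to X'_j$, where $S_j\in\subcat$ and $X'_j\in\subcat^\lor_{m-1}$. Exactness of products gives a conflation $\prod X_j\to\prod S_j\to\prod X'_j$. Here $\prod S_j\in\subcat$ by (b), and $\prod X'_j\in\subcat^\lor_{m-1}$ by induction. So $\prod X_j\in\subcat^\lor_m$.

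The main obstacle is that $\subcat^\lor=\bigcup_m\subcat^\lor_m$, while a family may have unbounded lengths $m_j$. I would remove this using boundedness and the standing hypothesis that all objects have finite $\extFunc$-injective dimension. First I would argue that $\cotorsionClass=\subcat^\lor_d$ for a uniform $d$. Alternatively, I would pad each coresolution with zeros only if $\subcat^\lor_{m}\subseteq\subcat^\lor_{m+1}$, which holds because $0\in\subcat$ and trivial conflations $X\to X\to 0$ exist. That inclusion handles only the bounded case, so uniformity is still needed for unbounded $m_j$.

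To get a uniform bound, I would consider $Q$, which has a finite $\subcat$-resolution of some length $d$ since $Q\in\cotorsionfreeClass=\subcat^\land$. Then I would use the dimension-shifting characterization $\subcat^\lor_d=\{X\in\cotorsionClass: X\in {}^{\perp_{>d}}\cotorsionfreeClass\text{-type condition}\}$, or more precisely $\cotorsionClass\subseteq\subcat^\lor_d$ via $\extFunc^{>d}(\cotorsionClass,-)$ vanishing on $\subcat^\land$-objects. Failing that, I would instead show directly that $\prod X_j\in{}^{\perp_1}\cotorsionfreeClass=\cotorsionClass$. That requires $\extFunc(\prod X_j,Y)=0$, which does not follow from products in general. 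So the uniform-length route via the finite resolution of the cogenerator $Q$ is the one I would pursue.
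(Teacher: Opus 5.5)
Your (a)$\Rightarrow$(b) argument matches the paper's. So does your fixed-length induction, which shows that each $\subcat^\lor_m$ is closed under products. The paper's (b)$\Rightarrow$(a) takes products of finite $(\cotorsionKernel)$-coresolutions in one line and does not address unbounded lengths, so you have correctly found the one point that needs more care.

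The gap is in your proposed fix. The length $d$ of a $\subcat$-resolution of $Q$ does not control $\subcat$-coresolution lengths in $\cotorsionClass$, and the inclusion $\cotorsionClass\subseteq\subcat^\lor_d$ you aim for is false in general. Take $\text{Mod}(\mathbb{Z})$, which has exact products, an injective cogenerator, and all injective dimensions $\le 1$. Let $\subcat=\text{Inj}(\mathbb{Z})$; this is a product-closed silting subcategory with $\Psi(\subcat)=(\text{Mod}(\mathbb{Z}),\text{Inj}(\mathbb{Z}))$. Then $Q\in\subcat^\land_0$, so $d=0$, but $\cotorsionClass=\text{Mod}(\mathbb{Z})\neq\text{Inj}(\mathbb{Z})=\subcat^\lor_0$. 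Your sketched characterization also points the wrong way. The condition $X\in{}^{\perp_{>d}}\cotorsionfreeClass$ holds for every $X\in\cotorsionClass$ by heredity, and $\extFunc^{>0}(\cotorsionClass,\subcat^\land)=0$ holds automatically, so neither can detect coresolution length.

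What detects the length is $\extFunc$ \emph{into} $X$. Let $X\in\cotorsionClass$ have a $\subcat$-coresolution $\{X_i\to S_i\to X_{i+1}\}$ of finite length $m>N$, with $X_0=X$.
Then $X_N\in\subcat^\lor=\cotorsionClass$.
Apply Lemma~\ref{lmm: shifting}(b) with $B\in\cotorsionClass\subseteq{}^{\perp_{>0}}\cotorsionfreeClass\subseteq{}^{\perp_{>0}}\subcat$; this gives $\extFunc^{N+1}(B,X)\simeq\extFunc(B,X_N)$.
So $\extFunc^{N+1}(\cotorsionClass,X)=0$ forces $X_N\in\cotorsionClass\cap\cotorsionClass^{\perp_1}=\subcat$, which means $X\in\subcat^\lor_N$.

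The bound $N$ should come from the standing hypothesis on injective dimension, not from $Q$. Given $X_j\in\cotorsionClass$, the product $\prod_jX_j$ lies in $\text{Inj}_\extFunc(\extriangulatedCat)^\lor_N$ for some $N$. Dimension shifting along an injective coresolution then gives $\extFunc^{N+1}(-,\prod_jX_j)=0$; this uses that injectives are $\extFunc^{>0}$-acyclic, as for the extensions in Proposition-Definition~\ref{prp-dfn: positive}(2). Each $X_j$ is a direct summand of $\prod_jX_j$, so additivity gives $\extFunc^{N+1}(-,X_j)=0$ for every $j$. Hence all $X_j$ lie in $\subcat^\lor_N$, and your fixed-length induction finishes the proof.

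Alternatively, the same summand argument shows that injective dimensions are uniformly bounded. Then $\extriangulatedCat$ has finite positive global dimension, and Lemma~\ref{lmm: dimension} gives $\cotorsionClass=\subcat^\lor_N$ directly.
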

\begin{proof}
    Recall that $\cotorsionClass={}^{\perp_1}\cotorsionfreeClass$ and $\cotorsionClass^{\perp_1}=\cotorsionfreeClass$ are closed under summands in $\extriangulatedCat$. The implication (a)$\Rightarrow$(b) then follows from the fact that $\text{Prod}(\cotorsionfreeClass)=\cotorsionfreeClass$ by Remark~\ref{rmk: products}(1). On the other hand, suppose that $\text{Prod}(\cotorsionKernel)=\cotorsionKernel$ and let $\{X_j\}_{j\in J}$ be a family of objects in $\cotorsionClass$. By Theorem~\ref{thm: AT}, we know that $\cotorsionClass=(\cotorsionKernel)^\lor$. Moreover, since $\extriangulatedTriple$ has exact products, by considering a finite $(\cotorsionKernel)$-coresolution for each $X_j$, it follows that $\prod_{j\in J}X_j\in(\cotorsionKernel)^\lor=\cotorsionClass$. 
\end{proof}

\begin{Lmm}\label{lmm: bounded}
    Let $\cotorsionPair$ be a cotorsion pair in $\extriangulatedCat$. Then it is bounded if, and only if, $\cotorsionClass^\land=\extriangulatedCat$. In particular, each hereditary complete cotorsion pair $\cotorsionPair$ in $\extriangulatedCat$ such that $\cotorsionClass^\land=\extriangulatedCat$ is uniquely determined by its intersection $\cotorsionKernel$.
\end{Lmm}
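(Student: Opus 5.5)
The plan is to show that, for any cotorsion pair $\cotorsionPair$, the condition $\cotorsionfreeClass^\lor=\extriangulatedCat$ holds automatically in the present setting. Recall that we are working under the standing hypotheses of Section~\ref{ssec: producers}: there is an $\extFunc$-injective cogenerator $Q$, and every object has finite $\extFunc$-injective dimension. Once this is established, boundedness (Definition~\ref{dfn: cotorsion}(d)) reduces to the single condition $\cotorsionClass^\land=\extriangulatedCat$, which gives the equivalence. The implication "bounded $\Rightarrow$ $\cotorsionClass^\land=\extriangulatedCat$" is immediate from the definition, so only the converse needs an argument.

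For the converse, let $C\in\extriangulatedCat$. By Remark~\ref{rmk: products}(2), $\extriangulatedCat$ has enough $\extFunc$-injectives and $\text{Inj}_\extFunc(\extriangulatedCat)=\text{Prod}(Q)$. Since $\text{id}_\extFunc(C)<\infty$, we have $C\in\text{Inj}_\extFunc(\extriangulatedCat)^\lor_m$ for some $m$, so $C$ admits a finite $\text{Inj}_\extFunc(\extriangulatedCat)$-coresolution. By Remark~\ref{rmk: orthogonality}, $\text{Inj}_\extFunc(\extriangulatedCat)\subseteq\cotorsionfreeClass$ for any cotorsion pair. The same coresolution is therefore a finite $\cotorsionfreeClass$-coresolution, that is, $\text{Inj}_\extFunc(\extriangulatedCat)^\lor\subseteq\cotorsionfreeClass^\lor$ (the operation $(-)^\lor_m$ is clearly monotone in the subcategory, by induction on $m$). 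Hence $\cotorsionfreeClass^\lor=\extriangulatedCat$. Together with the hypothesis $\cotorsionClass^\land=\extriangulatedCat$, this shows the pair is bounded.

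For the "in particular" statement, let $\cotorsionPair$ be hereditary and complete with $\cotorsionClass^\land=\extriangulatedCat$. By the first part it is bounded, so it lies in the domain of the bijection $\Phi$ of Theorem~\ref{thm: AT}. Injectivity of $\Phi$, namely $\Psi\Phi=\text{id}$ so that $\cotorsionPair=((\cotorsionKernel)^\lor,(\cotorsionKernel)^\land)$, then says that the pair is uniquely determined by $\cotorsionKernel$.

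There is no serious obstacle here. The only point needing care is that the finite injective dimension hypothesis is phrased through $\text{Inj}_\extFunc(\extriangulatedCat)^\lor_m$, so it directly yields coresolutions with injective terms. Those terms lie in $\cotorsionfreeClass=\cotorsionClass^{\perp_1}$ simply because they are $\extFunc$-injective, and no heredity or completeness is needed for the first part.
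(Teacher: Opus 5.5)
Your proof is correct and follows the paper's argument: $\text{Inj}_\extFunc(\extriangulatedCat)\subseteq\cotorsionfreeClass$ for any cotorsion pair, so finite $\extFunc$-injective dimension gives $\cotorsionfreeClass^\lor=\extriangulatedCat$ automatically, and the ``in particular'' part is exactly the injectivity of $\Phi$ from Theorem~\ref{thm: AT}. Your appeal to Remark~\ref{rmk: products}(2) is superfluous, since finite injective dimension is defined directly through $\text{Inj}_\extFunc(\extriangulatedCat)^\lor_m$, so neither enough injectives nor the cogenerator $Q$ is used.
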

\begin{proof}
    Recall that $\text{Inj}_\extFunc(\extriangulatedCat)\subseteq\cotorsionfreeClass$ by definition of $\text{Inj}_\extFunc(\extriangulatedCat)$, since taking orthogonals reverses inclusions. Thus, the first assertion follows from the assumption that all objects in $\extriangulatedCat$ have finite $\extFunc$-injective dimension. The second assertion is then given by Theorem~\ref{thm: AT}.
\end{proof}

In upcoming sections we will work with the class of objects appearing in Proposition~\ref{prp: bijections} and its analogue in the dual setting, for which we introduce some terminology.

\begin{Dfn}\label{dfn: producer}
    Let $\subcat$ be a presilting subcategory of $\extriangulatedCat$. A \emph{(co)producer} of $\subcat$ is an object $C\in\extriangulatedCat$ such that $\text{Prod}(C)=\subcat$ (respectively, $\text{Add}(C)=\subcat$). 
\end{Dfn}

\subsubsection{Mutation of producers via product-closed silting subcategories}\label{sssec: mutation}

We now show how the mutation for product-closed silting subcategories given by Theorem~\ref{thm: mutation} and its analog for left product-mutation induce an operation of mutation on their producers via Proposition~\ref{prp: bijections}.

We begin by considering the following property, for which we give some examples.

\begin{Cdt}\label{cdt: mono}
    Suppose that, for each $n>0$, $X\in\extriangulatedCat$, and family $\{C_j\}_{j\in J}$ of objects in $\extriangulatedCat$, there exists a monomorphism
    \[
        \extFunc^n\bigg(X, \prod_{j\in J}C_j\bigg) \hookrightarrow \prod_{j\in J} \extFunc^n(X,C_J).
    \] 
\end{Cdt}

\begin{Exp}\label{exp: mono}\leavevmode
    It will follow from Proposition-Definition~\ref{prp-dfn: positive}(1) that, if $\extriangulatedCat$ has enough $\extFunc$-projectives, then it satisfies Condition~\ref{cdt: mono} by Remark~\ref{rmk: products}(1).
\end{Exp}

\begin{Lmm}[]\label{lmm: mutation}
    Suppose that Condition~\ref{cdt: mono} is satisfied. Let $C\in\extriangulatedCat$ be a producer of a silting subcategory of $\extriangulatedCat$, $\mathcal{D}\subseteq\extriangulatedCat$ be such that $\text{Prod}(\mathcal{D})\subseteq \text{Prod}(C)$, and $A_1\to A_0\xrightarrow[]{g}C$ be an $\additiveReal$-conflation such that $g$ is a right $\text{Prod}(\mathcal{D})$-approximation of $C$. Then $\text{Prod}(A_1\bigoplus A_0)$ is a silting subcategory of $\extriangulatedCat$.
\end{Lmm}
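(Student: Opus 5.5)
The plan is to recognise $\text{Prod}(A_1\bigoplus A_0)$ as the right product-mutation of $\text{Prod}(C)$ along $\text{Prod}(\mathcal{D})$, and then invoke Theorem~\ref{thm: mutation}(c) together with maximality (Lemma~\ref{lmm: maximal}). Write $\subcat:=\text{Prod}(C)$, which is silting and product-closed, and $\mathcal{D}':=\text{Prod}(\mathcal{D})\subseteq\subcat$, which is product-closed and closed under summands.

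\textbf{Step 1: the mutation is defined.} First we check that $\mathcal{D}'$ is a good contravariantly finite subcategory of $\subcat$. Every $S\in\subcat$ is a summand of some power $C^J$. Exact products give an $\additiveReal$-conflation $A_1^J\to A_0^J\xrightarrow{g^J}C^J$. Here we need $g^J$ to be a right $\mathcal{D}'$-approximation.
\begin{itemize}
\item The dual of Lemma~\ref{lmm: good} (applied to the presilting $\subcat$, with $A_0\in\mathcal{D}'$) gives $A_1\in\mathcal{D}'^{\perp_{>0}}$.
\item Condition~\ref{cdt: mono}, or already Remark~\ref{rmk: products}(1) for $\extFunc^1$, then gives $A_1^J\in\mathcal{D}'^{\perp_1}$.
\item The long exact sequence for $\text{Hom}(D',-)$ now shows that $g^J$ is a right $\mathcal{D}'$-approximation.
\end{itemize}
To pass from $C^J$ to a summand $S$, compose with the split inclusion and use the octahedral argument exactly as in Lemma~\ref{lmm: product-mutation}. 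This produces an $\additiveReal$-conflation $N_S\to A_0^J\to S$ with $N_S\in\text{Prod}(A_1\bigoplus A_0)$. Hence $\prod\nolimits^R(\subcat;\mathcal{D}')$ is defined, and Theorem~\ref{thm: mutation}(c) shows it is a product-closed silting subcategory.

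\textbf{Step 2: presilting.} Next we show that $\mathcal{N}:=\text{Prod}(A_1\bigoplus A_0)$ is presilting. Products of the conflation $A_1\to A_0\to C$ give, for every product $X$ of copies of $A_1\bigoplus A_0$, a conflation $X\to D_X\to S_X$ with $D_X\in\mathcal{D}'$ and $S_X\in\subcat$. We must show $\extFunc^n(X,\mathcal{N})=0$ for $n>0$. It suffices to show $\extFunc^n(X,A_1^{J})=0$ and $\extFunc^n(X,A_0^{J})=0$, and this is where Condition~\ref{cdt: mono} is used: it reduces the question to $\extFunc^n(X,A_1)=0$ and $\extFunc^n(X,A_0)=0$.

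For these, use the first argument of Theorem~\ref{thm: mutation}(a). The vanishing $\extFunc^n(D_X,A_i)=0$ holds because $A_1\in\mathcal{D}'^{\perp_{>0}}$ and $A_0\in\subcat$. The vanishing $\extFunc^{n+1}(S_X,A_i)=0$ holds because $A_1\in\subcat^{\perp_{>1}}$ (from the conflation $A_1\to A_0\to C$) and $A_0\in\subcat$. Summands are then handled by additivity.

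\textbf{Step 3: identification.} Finally, $\mathcal{N}\subseteq\prod\nolimits^R(\subcat;\mathcal{D}')$ is not automatic, because $A_0$ need not lie in $\mathcal{D}$ itself; it does, however, lie in $\mathcal{D}'$. Instead we argue via thick closure. $C$ lies in $\mathcal{N}^\land_1$, so $\subcat\subseteq\text{Prod}$-closure arguments give $\text{thick}(\mathcal{N})\supseteq\text{thick}(C^J)$ for all $J$. Since thick subcategories are closed under summands, $\text{thick}(\mathcal{N})\supseteq\subcat$, and thus $\text{thick}(\mathcal{N})=\extriangulatedCat$. Combined with Step~2 and closure under summands by definition of $\text{Prod}$, this makes $\mathcal{N}$ silting. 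So Step~1 is only needed for the approximation facts, and Step~3 avoids Theorem~\ref{thm: mutation} entirely.

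\textbf{Main obstacle.} The hard part is Step~2: showing $\extFunc^n(A_1^J,A_1^{J'})=0$ for arbitrary index sets. Product-closedness of $\mathcal{D}'^{\perp_{>0}}$ in higher degrees is exactly what Condition~\ref{cdt: mono} supplies, and care is needed to bootstrap $\extFunc^{n+1}(\subcat,A_1)=0$ from $A_0,C\in\subcat$.
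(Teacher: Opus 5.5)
Your argument is correct and, once the Step~1 detour is set aside, it is essentially the paper's proof. The paper likewise gets $\text{thick}(\text{Prod}(A_1\bigoplus A_0))=\extriangulatedCat$ from the product conflation $A_1^J\to A_0^J\to C^J$, and proves the presilting property by applying $\text{Hom}_\extriangulatedCat(-,A_1\bigoplus A_0)$ to $(A_1\bigoplus A_0)^J\to(A_0\bigoplus A_0)^J\to C^J$, using the dual of Lemma~\ref{lmm: good} and then Condition~\ref{cdt: mono} to handle products in the second variable. Step~1 is unnecessary, and its claim that $N_S\in\text{Prod}(A_1\bigoplus A_0)$ is not justified at that stage; it only follows a posteriori from the lemma together with maximality, so it is right that your Steps~2 and~3 do not rely on it.
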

\begin{proof}
    Clearly, $C\in\text{thick}(A_1\bigoplus A_0)$ by the $\additiveReal$-conflation $A_1\to A_0\xrightarrow[]{g}C$, which implies that $\text{Prod}(C)\subseteq \text{thick}(\text{Prod}(A_1\bigoplus A_0))$ since $\extriangulatedTriple$ has exact products. Moreover, since $\text{Prod}(C)$ is a silting subcategory of $\extriangulatedCat$, it follows that $\text{thick}(\text{Prod}(A_1\bigoplus A_0))=\extriangulatedCat$.

    Consider the $\additiveReal$-conflation $A_1\bigoplus A_0\to A_0\bigoplus A_0\xrightarrow[]{(\begin{smallmatrix} g &0 \end{smallmatrix})} C$. Let $D\in\text{Prod}(\mathcal{D})$. By applying the functor $\text{Hom}_{\extriangulatedCat}(D,-)$ to this $\additiveReal$-conflation we obtain the exact sequence 
    \[
        \text{Hom}_{\extriangulatedCat}\big(D, A_0\bigoplus A_0\big)\to \text{Hom}_{\extriangulatedCat}(D,C)\to \extFunc\big(D, A_1\bigoplus A_0\big)\to \extFunc\big(D, A_0\bigoplus A_0\big).
    \] 
    Note that $A_0\bigoplus A_0\xrightarrow[]{(\begin{smallmatrix} g &0 \end{smallmatrix})}C$ is a right $\text{Prod}(\mathcal{D})$ approximation of $C$, by which the leftmost morphism is surjective. Furthermore, since $A_0\in\text{Prod}(\mathcal{D})\subseteq \text{Prod}(C)\subseteq\text{Prod}(C)^{\perp_{>0}}$, the rightmost term vanishes. Thus, it follows that $\text{Prod}(\mathcal{D})\subseteq {}^{\perp_1}(A_1\bigoplus A_0)$.
    
        Since $\extriangulatedTriple$ has exact products we may consider, for any set $J$, an $\additiveReal$-conflation of the form $\big(A_1\bigoplus A_0\big)^J\to \big(A_0\bigoplus A_0\big)^J\to C^J$. By applying $\text{Hom}_{\extriangulatedCat}(-, A_1\bigoplus A_0)$ to it, we obtain the exact sequence
    \[
    \extFunc^n\big(\big(A_0\bigoplus A_0\big)^J, A_1\bigoplus A_0\big)\to \extFunc^n\big(\big(A_1\bigoplus A_0\big)^J, A_1\bigoplus A_0\big)\to \extFunc^{n+1}\big(C^J, A_1\bigoplus A_0\big)
    \] 
    for each $n>0$. We note that:
    \begin{itemize}
        \item $\extFunc\big(\big(A_0\bigoplus A_0\big)^J, A_1\bigoplus A_0\big)$ vanishes by the previous paragraph;
        \item $\extFunc^n\big(\big( A_0\bigoplus  A_0\big)^J,  A_1\big)$ and $\extFunc^n(C^J, A_1) $ vanish for $n>1$ by the dual of Lemma~\ref{lmm: good};
        \item $\extFunc^n\big(\big( A_0\bigoplus  A_0\big)^J,  A_0\big)$ and $\extFunc^n(C^J, A_0)$ vanish for $n>1$ by hypothesis.
    \end{itemize}
    Since the bifunctor $\extFunc^n$ is additive for each $n>0$, $(A_1\bigoplus A_0)\in\text{Prod}(A_1\bigoplus A_0)^{\perp_{>0}}$. Moreover, it follows from Condition~\ref{cdt: mono} that $\text{Prod}(A_1\bigoplus A_0)\subseteq \text{Prod}(A_1\bigoplus A_0)^{\perp_{>0}}$. 
\end{proof}

\begin{Dfn}\cite[Definition~3.2]{hügel2025mutation}\label{dfn: mutation}
    Let $C,C'\in\extriangulatedCat$ be producers of silting subcategories of $\extriangulatedCat$, and let $\mathcal{D}=\text{Prod}(C)\cap\text{Prod}(C')$. We say that
    \begin{enumerate}[label=\DfnLbl]
        \item $C'$ is a \emph{left mutation} of $C$ \emph{(with respect to $\mathcal{D}$)} if there exists an $\additiveReal$-conflation
            \[
                C\xrightarrow[]{f} B_0\to B_1
            \] 
            such that $f$ is a left $\mathcal{D}$-approximation of $C$ and $\text{Prod}(B_0\bigoplus B_1)=\text{Prod}(C')$.

        \item $C'$ is a \emph{right mutation} of $C$ \emph{(with respect to $\mathcal{D}$)} if there exists an $\additiveReal$-conflation
            \[
                A_1\to A_0\xrightarrow[]{g} C
            \] 
            such that $g$ is a right $\mathcal{D}$-approximation of $C$ and $\text{Prod}(A_1\bigoplus A_0)=\text{Prod}(C')$.
    \end{enumerate}
\end{Dfn}

\begin{Prp}[]\label{prp: producer mutation}
    Suppose that Condition~\ref{cdt: mono} is satisfied. Let $C,C'\in\extriangulatedCat$ be producers of silting subcategories of $\extriangulatedCat$, and let $\mathcal{D}=\text{Prod}(C)\cap\text{Prod}(C')$. Then the following conditions are equivalent.
    \begin{enumerate}[label=\PrpLbl]
        \item $C'$ is a left mutation of $C$. 
        \item $C$ is a right mutation of $C'$. 
        \item $\mathcal{D}$ is a good contravariantly finite subcategory of $\text{Prod}(C')$ and $\text{Prod}(C)$ is the right product-mutation of $\text{Prod}(C')$ with respect to $\mathcal{D}$. 
        \item $\mathcal{D}$ is a good covariantly finite subcategory of $\text{Prod}(C)$ and $\text{Prod}(C')$ is the left product-mutation of $\text{Prod}(C)$ with respect to $\mathcal{D}$. 
    \end{enumerate}
\end{Prp}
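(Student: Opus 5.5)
The plan is to prove the cycle (b)$\Rightarrow$(c)$\Rightarrow$(d)$\Rightarrow$(a)$\Rightarrow$(b). Condition~(c) depends on $C'$ only through $\text{Prod}(C')$, and so does $\mathcal{D}$. This lets me replace $C'$ by any equivalent producer whenever it is convenient. Throughout I use that $\mathcal{D}=\text{Prod}(C)\cap\text{Prod}(C')$ is closed under products and summands.

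\textbf{(b)$\Rightarrow$(c).} Let $A_1\to A_0\xrightarrow{g}C'$ be as in (b). For any set $J$, exact products give an $\additiveReal$-conflation $A_1^J\to A_0^J\xrightarrow{g^J}C'^J$. Since $\text{Hom}_\extriangulatedCat(D,C'^J)=\text{Hom}_\extriangulatedCat(D,C')^J$, the map $g^J$ is again a right $\mathcal{D}$-approximation. An arbitrary $S\in\text{Prod}(C')$ satisfies $S\oplus S''\simeq C'^J$ for some $J$. The composite of $g^J$ with the split epimorphism $C'^J\to S$ is an $\additiveReal$-deflation by (ET4)$^{\text{op}}$, and it is a right $\mathcal{D}$-approximation. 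Hence $\mathcal{D}$ is a good contravariantly finite subcategory of $\text{Prod}(C')$.

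To compute $\prod\nolimits^R(\text{Prod}(C');\mathcal{D})$, I take the direct sum of approximation conflations for $S$ and $S''$. This is an approximation conflation for $C'^J$. By the independence of the choice of approximation (Remark~\ref{rmk: product-mutation}, in the form of the argument in Proposition~\ref{prp: left invariance}), $N_S$ is a summand of $A_1^J\oplus D$ for some $D\in\mathcal{D}$. Therefore $\prod\nolimits^R(\text{Prod}(C');\mathcal{D})\subseteq\text{Prod}(C)$. Conversely, this mutation contains $A_0\in\mathcal{D}$ and $A_1=N_{C'}$, so it contains $\text{Prod}(A_1\oplus A_0)=\text{Prod}(C)$.

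\textbf{(c)$\Rightarrow$(d).} This is Theorem~\ref{thm: mutation}(d) applied to $\subcat=\text{Prod}(C')$ with the product-closed subcategory $\mathcal{D}$.

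\textbf{(d)$\Rightarrow$(a).} Take an approximation conflation $C\xrightarrow{f}B_0\to B_1$ with $f$ a left $\mathcal{D}$-approximation. By definition, $B_0,B_1\in\prod\nolimits^L(\text{Prod}(C);\mathcal{D})=\text{Prod}(C')$. Hence $\text{Prod}(B_0\oplus B_1)\subseteq\text{Prod}(C')$, and in particular it is presilting. Moreover $C\in\text{thick}(B_0\oplus B_1)$. Using exact products, $\text{Prod}(C)\subseteq\text{thick}(\text{Prod}(B_0\oplus B_1))$, and the left-hand side is silting, so $\text{Prod}(B_0\oplus B_1)$ is silting. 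Lemma~\ref{lmm: maximal} then forces $\text{Prod}(B_0\oplus B_1)=\text{Prod}(C')$.

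\textbf{(a)$\Rightarrow$(b).} This is the step I expect to be the main obstacle. Left approximations do not behave well under products, so I avoid working with $C'$ directly and instead use $C'':=B_0\oplus B_1$, which satisfies $\text{Prod}(C'')=\text{Prod}(C')$. Taking the direct sum of the conflation in (a) with the trivial conflation $0\to B_0\to B_0$ gives an $\additiveReal$-conflation
\[
C\to B_0\oplus B_0\xrightarrow{g} B_1\oplus B_0=C''.
\]
Its deflation $g$ is a right $\mathcal{D}$-approximation. Indeed, $\extFunc(\mathcal{D},C)=0$ because $\mathcal{D}\subseteq\text{Prod}(C)$ and $\text{Prod}(C)$ is presilting, so the long exact sequence of Proposition~\ref{prp: sequences} makes $\text{Hom}_\extriangulatedCat(\mathcal{D},g)$ surjective (cf. the dual of Lemma~\ref{lmm: good}). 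Furthermore, $\text{Prod}(C\oplus B_0\oplus B_0)=\text{Prod}(C)$ since $B_0\in\mathcal{D}\subseteq\text{Prod}(C)$. Thus $C$ is a right mutation of $C''$, which is (b) for $C''$.

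By the already proven implication (b)$\Rightarrow$(c), condition (c) holds for $C''$, hence for $C'$. It then remains to get back to (b) for $C'$ itself. For this I run (c)$\Rightarrow$(b) directly. Take an approximation conflation $N_{C'}\to D_0\xrightarrow{g'}C'$ and set $A_1=N_{C'}$, $A_0=D_0$. Then $\text{Prod}(A_1\oplus A_0)\subseteq\prod\nolimits^R(\text{Prod}(C');\mathcal{D})=\text{Prod}(C)$. Lemma~\ref{lmm: mutation}, which is where Condition~\ref{cdt: mono} enters, shows that $\text{Prod}(A_1\oplus A_0)$ is silting. Lemma~\ref{lmm: maximal} then gives $\text{Prod}(A_1\oplus A_0)=\text{Prod}(C)$, which is (b) and closes the cycle.
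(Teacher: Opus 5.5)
Your cycle is correct, and it takes a genuinely different route from the paper's proof. The paper splits the statement into three equivalences:
\begin{itemize}
\item it does not argue (a)$\Leftrightarrow$(b), but defers to the proof of \cite[Corollary~3.7]{hügel2025mutation};
\item it gets (c)$\Leftrightarrow$(d) from Theorem~\ref{thm: mutation}(d) together with its left-handed analogue;
\item it proves (b)$\Leftrightarrow$(c) directly. Goodness of $\mathcal{D}$ in $\text{Prod}(C')$ comes from an octahedral diagram built on $M\to (C')^J\to S$ and $A_1^J\to A_0^J\to (C')^J$. The equality $\prod\nolimits^R(\text{Prod}(C');\mathcal{D})=\text{Prod}(C)$ comes from placing a silting subcategory (Lemma~\ref{lmm: mutation}) inside a presilting one (Theorem~\ref{thm: mutation}(a)) and applying Lemma~\ref{lmm: maximal}.
\end{itemize}

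You instead run a single cycle, and several steps are more economical:
\begin{itemize}
\item You get goodness more cheaply, by composing $g^J$ with the split epimorphism $(C')^J\to S$.
\item You compute the product-mutation outright by the Schanuel-type comparison of approximation conflations, so (b)$\Rightarrow$(c) needs neither maximality nor Theorem~\ref{thm: mutation}(a).
\item You use only one direction of Theorem~\ref{thm: mutation}(d).
\item You handle the passage from (a) to (b) by replacing $C'$ with $C''=B_0\oplus B_1$ and observing that (c) depends only on $\text{Prod}(C')$. This sidesteps the fact that Definition~\ref{dfn: mutation} refers to the object itself rather than to its $\text{Prod}$-class.
\end{itemize}
The gain is that (a)$\Leftrightarrow$(b) becomes self-contained within the extriangulated framework instead of being imported from the triangulated setting.

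Your appeal to Lemma~\ref{lmm: mutation} in the final step is unnecessary. Since $\text{Prod}(A_1\oplus A_0)\subseteq\text{Prod}(C)$, it is already presilting. Generation follows from $C'\in\text{thick}(A_1\oplus A_0)$ together with exact products, exactly as in your (d)$\Rightarrow$(a). So your argument never actually uses Condition~\ref{cdt: mono}, which the paper's proof brings in only through Lemma~\ref{lmm: mutation}.
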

\begin{proof}
    The equivalence (a)$\Leftrightarrow$(b) follows by proceeding as in the proof of \cite[Corollary~3.7]{hügel2025mutation}, whereas (c)$\Leftrightarrow$(d) follows directly from Theorem~\ref{thm: mutation}(d) and its analogue for left product-mutation. It thus suffices to prove (b)$\Leftrightarrow$(c). 

    Suppose that (c) holds. Then there exists an $\additiveReal$-conflation $A_1\to A_0\xrightarrow[]{g}C'$ with $g$ a right $\mathcal{D}$-approximation of $C'$. By Lemma~\ref{lmm: mutation}, $\text{Prod}(A_1\bigoplus A_0)$ is a silting subcategory of $\extriangulatedCat$. Moreover, it is contained in $\prod\nolimits^R(\text{Prod}(C');\mathcal{D})$ by definition, which is equal to $\text{Prod}(C)$ by assumption. It thus follows from the maximality property of silting subcategories that $\text{Prod}(A_1\bigoplus A_0)=\text{Prod}(C)$, which proves (b).

    On the other hand, suppose there exists an $\additiveReal$-conflation $A_1\to A_0\xrightarrow[]{g}C'$ such that $g$ is a right $\mathcal{D}$-approximation of $C'$ and $\text{Prod}(A_1\bigoplus A_0)=\text{Prod}(C)$. We show that $\mathcal{D}$ is a good contravariantly finite subcategory of $\text{Prod}(C')$. Let $S\in\text{Prod}(C')$. Then there exist a set $J$ and an $\additiveReal$-conflation $M\to (C')^J\to S$ whose morphisms on the left and right are the canonical inclusion and projection, respectively, by which $M\in\text{Prod}(C')$. Since $\extriangulatedTriple$ has exact products, we may consider the $\additiveReal$-conflation $A_1^J\to A_0^J\xrightarrow[]{g^J} (C')^J$. By the dual of the extriangulated octahedral axiom, we have a commutative diagram of $\additiveReal$-conflations
    \begin{center}
        \begin{tikzcd}
            A_1^J \arrow[equals]{d}[]{} \arrow[]{r}[]{} &E \arrow[]{d}[]{} \arrow[]{r}[]{} &M \arrow[]{d}[]{} \\
            A_1^J \arrow[]{r}[]{} &A_0^J \arrow[]{d}[]{g'} \arrow[]{r}[]{g^J} &(C')^J \arrow[]{d}[]{} \\
                                       &S \arrow[equals]{r}[]{} &S.
        \end{tikzcd}
    \end{center}
    Since $\text{Prod}(C')$ is silting, we know that $M,S\in\text{Prod}(C')^{\perp_{>0}}\subseteq \mathcal{D}^{\perp_{>0}}$. Note that $\text{Prod}(\mathcal{D})=\mathcal{D}$ implies that $g^J$ is a right $\mathcal{D}$-approximation of $(C')^J$. It then follows from the dual of Lemma~\ref{lmm: good} that $A_1^J\in\mathcal{D}^{\perp_{>0}}$, which is closed under extensions in $\extriangulatedCat$, by which $E\in\mathcal{D}^{\perp_{>0}}$. In particular, by applying $\text{Hom}_{\extriangulatedCat}(\mathcal{D},-)$ to the middle column in the previous diagram we have that $g'$ is a right $\mathcal{D}$-approximation of $S$. 

    By our assumption and Lemma~\ref{lmm: mutation}, we know that $\text{Prod}(A_1\bigoplus A_0)$ is a silting subcategory of $\extriangulatedCat$, which is contained in $\prod\nolimits^R(\text{Prod}(C');\mathcal{D})$ by definition. Since $\text{Prod}(\mathcal{D})=\mathcal{D}$, we have by Theorem~\ref{thm: mutation}(a) that $\prod\nolimits^R(\text{Prod}(C');\mathcal{D})$ is a presilting subcategory of $\extriangulatedCat$. Thus, it follows from the maximality property of silting subcategories that $\prod\nolimits^R(\text{Prod}(C');\mathcal{D})=\text{Prod}(A_1\bigoplus A_0)$, which is equal to $\text{Prod}(C)$ by assumption. This proves (b)$\Rightarrow$(c).
\end{proof}

\begin{Rmk}\label{rmk: producers}
    Note that the existence of an $\extFunc$-injective cogenerator of $\extriangulatedCat$ and the assumption that all objects in $\extriangulatedCat$ have finite $\extFunc$-injective dimension have not been used in this short section. However, when adding these hypotheses, we recover the bijection between product-closed silting subcategories and their producers from Proposition~\ref{prp: bijections}. In this case, Proposition~\ref{prp: producer mutation} establishes that the mutation of product-closed silting subcategories induces a mutation of producers of such subcategories, and moreover that these operations are equivalent.
\end{Rmk}

\subsection{Some special cases}\label{ssec: special}

We collect here some results which give a more detailed description of Proposition~\ref{prp: bijections} in some special cases which appear in Section~\ref{sec: examples}. 

The following result will be useful in the upcoming sections.

\begin{Lmm}[Dimension shifting]\label{lmm: shifting}
    Let $\extriangulatedTriple$ have positive extensions and $\subcat\subseteq \extriangulatedCat\ni C$.
    \begin{enumerate}[label=\LmmLbl]
    
        \item Suppose that $\{L_{i+1}\to S_i\to L_i\}_{i\ge 0}$ is an $\subcat$-resolution of $C$. Then, for all $D\in\subcat^{\perp_{>0}}, n\ge 0$ and $k>0$, we have that
            \[
            \extFunc^{n+k}(C,D)\simeq \extFunc^k(L_n,D).
            \] 
            
        \item Suppose that $\{L_i\to S_i\to L_{i+1}\}_{i\ge 0}$ is an $\subcat$-coresolution of $C$. Then, for all $B\in{}^{\perp_{>0}}\subcat, n\ge 0$ and $k>0$, we have that
            \[
            \extFunc^{n+k}(B,C)\simeq \extFunc^k(B,L_n).
            \] 
    \end{enumerate}
\end{Lmm}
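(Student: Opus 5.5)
We prove (a) by induction on $n$; part (b) is its exact dual, obtained by applying $\text{Hom}_\extriangulatedCat(B,-)$ instead of $\text{Hom}_\extriangulatedCat(-,D)$ to the $\additiveReal$-conflations of the coresolution.

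For $n=0$ there is nothing to prove, since $L_0=C$. For the inductive step it suffices to show that, for every $i\ge0$ and $k>0$,
\[
\extFunc^{k+1}(L_i,D)\simeq \extFunc^{k}(L_{i+1},D).
\]
Composing these isomorphisms for $i=0,\dots,n-1$ with exponents $n+k-1,\dots,k$ then gives $\extFunc^{n+k}(C,D)\simeq\extFunc^k(L_n,D)$, since every exponent used stays $>0$.

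To obtain the displayed isomorphism, fix the $\additiveReal$-conflation $L_{i+1}\to S_i\to L_i$ and apply $\text{Hom}_\extriangulatedCat(-,D)$. By Proposition~\ref{prp: sequences} for $k=1$, and by the contravariant long exact sequences of Definition~\ref{dfn: positive} for higher degrees, we get an exact piece
\[
\extFunc^{k}(S_i,D)\to \extFunc^{k}(L_{i+1},D)\to \extFunc^{k+1}(L_i,D)\to \extFunc^{k+1}(S_i,D).
\]
The connecting morphism here comes from the sequence $\extFunc^{n-1}(A,X)\to\extFunc^n(C,X)\to\extFunc^n(B,X)\to\extFunc^n(A,X)$ with $n=k+1$. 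Since $S_i\in\subcat$ and $D\in\subcat^{\perp_{>0}}$, the outer terms $\extFunc^k(S_i,D)$ and $\extFunc^{k+1}(S_i,D)$ both vanish because $k,k+1>0$. Hence the middle map is an isomorphism.

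The only delicate point is bookkeeping. Definition~\ref{dfn: positive} gives only four-term sequences for each $n$, not a single spliced long exact sequence. One must check that the two four-term pieces, the one ending at $\extFunc^{k}(L_{i+1},D)$ and the one starting there, provide exactly the exactness needed: injectivity of the connecting map comes from the vanishing of $\extFunc^k(S_i,D)$ in the sequence for degree $k+1$, and surjectivity from the vanishing of $\extFunc^{k+1}(S_i,D)$. For $k=1$ the degree-one piece is exactly the one in Proposition~\ref{prp: sequences}, which also provides the start of the sequence. Once this is checked, the argument is routine.
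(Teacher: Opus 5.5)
Your argument is the paper's argument. You apply $\text{Hom}_\extriangulatedCat(-,D)$ to $L_{i+1}\to S_i\to L_i$ and obtain the sequence
\[
\extFunc^{k}(S_i,D)\to \extFunc^{k}(L_{i+1},D)\to \extFunc^{k+1}(L_i,D)\to \extFunc^{k+1}(S_i,D).
\]
Its outer terms vanish because $S_i\in\subcat$ and $D\in\subcat^{\perp_{>0}}$, and you iterate the resulting isomorphism along the resolution. The paper packages this as an induction on $n$ for all $k$ at once, while you chain the one-step isomorphisms directly; these are the same proof.

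One correction concerns your last paragraph: the justification you give for injectivity is wrong. A four-term exact sequence is exact only at its two middle terms.

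\begin{itemize}
\item The degree-$(k+1)$ piece of Definition~\ref{dfn: positive} is exact at $\extFunc^{k+1}(L_i,D)$. This gives surjectivity of the connecting map once $\extFunc^{k+1}(S_i,D)=0$.
\item That same piece does not contain $\extFunc^{k}(S_i,D)$ at all.
\item Injectivity needs exactness at $\extFunc^{k}(L_{i+1},D)$. This term is exactly where the degree-$k$ piece ends and the degree-$(k+1)$ piece begins, so it is an interior term of neither. For $k=1$, Proposition~\ref{prp: sequences} also stops at $\extFunc(L_{i+1},D)$.
\end{itemize}

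This exactness is part of what ``positive extensions'' means in the work cited for Definition~\ref{dfn: positive}: the four-term pieces splice into a long exact sequence. The paper uses it tacitly in exactly the same way. You should invoke it as such, rather than claim it follows from the four-term pieces as literally written.
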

\begin{proof}
    We only prove (a); the proof of (b) is analogous. Let $D\in\subcat^{\perp_{>0}}$. We will proceed by induction on $n$. The case $n=0$ is trivial. Assume the isomorphisms hold for some $n\ge0$ and let $k>0$. By the induction hypothesis, we know that $\extFunc^{n+(k+1)}(C,D)\simeq \extFunc^{k+1}(L_n,D)$. Moreover, by applying $\text{Hom}_\extriangulatedCat(-,D)$ to the $\additiveReal$-conflation $L_{n+1}\to S_n\to L_n$ we obtain the exact sequence
    \[
    \extFunc^k(S_n,D)\to \extFunc^k(L_{n+1},D)\to \extFunc^{k+1}(L_n,D)\to \extFunc^{k+1}(S_n,D).
    \] 
    Since $S_n\in\subcat$, it follows that $\extFunc^{k+1}(L_n,D)\simeq \extFunc^k(L_{n+1},D)$, by which $\extFunc^{(n+1)+k}(C,D)\simeq \extFunc^k(L_{n+1},D)$.
\end{proof}

\subsubsection{Enough $\extFunc$-projectives}\label{sssec: enough}

\begin{Dfn}\label{dfn: syzygies}\cite[\S 5.1]{liu2019hearts}
    Let $\subcat\subseteq \extriangulatedCat$. If $\extriangulatedCat$ has enough $\extFunc$-projectives, we recursively define the classes of \emph{$i$-th syzygies} of $\subcat$ as
    \[
    \resizebox{\hsize}{!}{$\syz[i]{\subcat} := \begin{cases} \subcat &\text{if } i=0, \\ \{C\in\extriangulatedCat \mid \exists \text{ an } \additiveReal\text{-conflation } C\to P\to S' \text{ with } P\in\text{Proj}_\extFunc(\extriangulatedCat) \text{ and } S'\in\syz[i-1]{\subcat}\} &\text{if } i>0. \end{cases}$}
    \]
    If $\extriangulatedCat$ has enough $\extFunc$-injectives, the classes $\cosyz[i]{\subcat}$ of \emph{$i$-th cosyzygies} of $\subcat$ are defined dually.
\end{Dfn}

We can now give a couple of examples of extriangulated categories $\extriangulatedTriple$ with positive extensions for which we can explicitly calculate the biadditive funtors $\extFunc^n$ for $n>1$.

\begin{Prp-Dfn}\label{prp-dfn: positive}
    \leavevmode
    \begin{enumerate}[label=\ExpLbl]
        \item If $\extriangulatedCat$ has enough $\extFunc$-projectives, we recursively define the additive bifunctors $\extFunc^n_\Omega:\extriangulatedCat^\text{op}\times\extriangulatedCat\to \text{Ab}$ for each $n\ge0$ and $A,C\in\extriangulatedCat$ as
            \[
                \extFunc^n_\Omega(C,A) := \begin{cases} \text{Hom}_\extriangulatedCat(C,A) &\text{if } n=0, \\ \extFunc(\syz[n-1]{C},A) &\text{if } n>0, \end{cases}
            \] 
            which do not depend on a particular choice of syzygy by \cite[Proposition~3.4]{herschend2022n-exangulated}. Then, $\extriangulatedTriple$ has positive extensions by \cite[Definition~3.12~\&~Proposition~3.20]{herschend2022n-exangulated}. 

        \item If $\extriangulatedCat$ has enough $\extFunc$-injectives, we recursively define additive bifunctors $\extFunc^n_\Sigma:\extriangulatedCat^\text{op}\times\extriangulatedCat\to \text{Ab}$ for each $n\ge0$ and $A,C\in\extriangulatedCat$ as
            \[
                \extFunc^n_\Sigma(C,A) := \begin{cases} \text{Hom}_\extriangulatedCat(C,A) &\text{if } n=0, \\ \extFunc(C,\cosyz[n-1]{A}) &\text{if } n>0. \end{cases}
            \] 
            By the same results cited in (1), these do not depend on a particular choice of cosyzygy, and they imply that $\extriangulatedTriple$ has positive extensions. In particular, if $\extriangulatedCat$ has both enough $\extFunc$-projectives and $\extFunc$-injectives, it follows from \cite[Lemma~5.1]{liu2019hearts} that $\extFunc^n_\Omega\simeq\extFunc^n_\Sigma$ for each $n\ge0$. 
    \end{enumerate}
\end{Prp-Dfn}

The following result is obtained using standard arguments with syzygies.

\begin{Lmm}\label{lmm: generated}
    Let $\extriangulatedCat$ have enough $\extFunc$-projectives and $\subcat\subseteq\extriangulatedCat$ be such that it is closed under cocones in $\extriangulatedCat$ and $\text{Proj}_\extFunc(\extriangulatedCat)\subseteq\subcat$. Then $\subcat^{\perp_1}=\subcat^{{\perp_{>0}}}$.
\end{Lmm}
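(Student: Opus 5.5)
The inclusion $\subcat^{\perp_{>0}}\subseteq\subcat^{\perp_1}$ holds by definition, so the work is in the reverse inclusion. Since $\extriangulatedCat$ has enough $\extFunc$-projectives, I will compute the higher extensions via Proposition-Definition~\ref{prp-dfn: positive}(1): for $n>1$, $\extFunc^n(C,A)\simeq\extFunc^n_\Omega(C,A)=\extFunc(\syz[n-1]{C},A)$. The identification of $\extFunc^n$ with $\extFunc^n_\Omega$ is implicit in the paper's use of positive extensions in this setting (cf.\ Example~\ref{exp: mono}). If one prefers to avoid it, the same conclusion follows from the long exact sequences of Definition~\ref{dfn: positive}, as explained below.

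Fix $Y\in\subcat^{\perp_1}$, $S\in\subcat$ and $n>1$. The first step is to produce a syzygy of $S$ inside $\subcat$. Choose an $\additiveReal$-deflation $P_0\to S$ with $P_0\in\text{Proj}_\extFunc(\extriangulatedCat)$ and let $K_1$ be its cocone, so that $K_1\to P_0\to S$ is an $\additiveReal$-conflation. Both $P_0$ and $S$ lie in $\subcat$, because $\text{Proj}_\extFunc(\extriangulatedCat)\subseteq\subcat$ by hypothesis. Since $\subcat$ is closed under cocones, this gives $K_1\in\subcat$. Iterating, we obtain conflations $K_{i+1}\to P_i\to K_i$ with $K_0=S$, every $P_i$ projective and every $K_i\in\subcat$. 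Each $K_{i}$ is an $i$-th syzygy of $S$ in the sense of Definition~\ref{dfn: syzygies}.

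The second step is dimension shifting. Applying $\text{Hom}_\extriangulatedCat(-,Y)$ to $K_{i+1}\to P_i\to K_i$ yields the exact sequence
\[
\extFunc^{k}(P_i,Y)\to\extFunc^k(K_{i+1},Y)\to\extFunc^{k+1}(K_i,Y)\to\extFunc^{k+1}(P_i,Y).
\]
For $k\ge1$ the outer terms vanish, because $P_i$ is $\extFunc$-projective. The vanishing $\extFunc^{k}(P_i,-)=0$ for $k\ge1$ is immediate from the syzygy description, since projectives have zero syzygies. Alternatively, it is exactly Lemma~\ref{lmm: shifting}(a) applied with $\text{Proj}_\extFunc(\extriangulatedCat)$ in place of $\subcat$ and with $Y$ ranging over $\text{Proj}_\extFunc(\extriangulatedCat)^{\perp_{>0}}=\extriangulatedCat$. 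Hence $\extFunc^{k+1}(K_i,Y)\simeq\extFunc^k(K_{i+1},Y)$, and by induction $\extFunc^n(S,Y)\simeq\extFunc(K_{n-1},Y)$. This vanishes because $K_{n-1}\in\subcat$ and $Y\in\subcat^{\perp_1}$. Therefore $Y\in\subcat^{\perp_{>0}}$.

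The only point needing care is the vanishing of $\extFunc^k(P,-)$ for projective $P$ and $k>1$. This is not part of the abstract definition of positive extensions, so I will justify it through the concrete description $\extFunc^n\simeq\extFunc^n_\Omega$. There, a syzygy of a projective $P$ may be taken to be $0$ via the conflation $0\to P\to P$, and the values of $\extFunc^n_\Omega$ are independent of the chosen syzygy.
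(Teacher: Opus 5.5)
Your proposal is correct and is the standard syzygy argument the paper refers to; the paper gives no written proof, only that remark. Closure under cocones together with $\text{Proj}_\extFunc(\extriangulatedCat)\subseteq\subcat$ puts every syzygy $K_{n-1}$ of $S\in\subcat$ back in $\subcat$, and then $\extFunc^n(S,Y)\simeq\extFunc(K_{n-1},Y)=0$ by Proposition-Definition~\ref{prp-dfn: positive}(1). One small point: your ``alternatively, Lemma~\ref{lmm: shifting}(a)'' justification of $\extFunc^k(P,-)=0$ is circular, since it presupposes $\text{Proj}_\extFunc(\extriangulatedCat)^{\perp_{>0}}=\extriangulatedCat$. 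The syzygy-based justification you end with is the correct one, and it is the same fact the paper invokes in the proof of Lemma~\ref{lmm: generated'}.
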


\begin{Lmm}[]\label{lmm: generated'}
    Let $\extriangulatedCat$ have both enough $\extFunc$-projectives and $\extFunc$-injectives, and $C\in\extriangulatedCat$. Then the cotorsion pair in $\extriangulatedCat$ generated by ${}^{\perp_{>0}}C$ is hereditary and equal to $({}^{\perp_{>0}}C,({}^{\perp_{>0}}C)^{\perp_1})$.
\end{Lmm}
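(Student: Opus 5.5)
Write $\cotorsionClass:={}^{\perp_{>0}}C$ and $\cotorsionfreeClass:=\cotorsionClass^{\perp_1}$. The plan is to show that $\cotorsionClass$ satisfies the hypotheses of Lemma~\ref{lmm: generated}, and then to check the two orthogonality conditions defining a cotorsion pair directly.

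\textbf{Step 1: closure properties of $\cotorsionClass$.} By Remark~\ref{rmk: heredity}(1), $\cotorsionClass={}^{\perp_{>0}}C$ is closed under extensions, summands and cocones in $\extriangulatedCat$. Any $\extFunc$-projective $P$ satisfies $\extFunc(P,-)=0$. By Proposition-Definition~\ref{prp-dfn: positive}(1) we may compute $\extFunc^n(P,-)=\extFunc(\syz[n-1]{P},-)$, and we can take the syzygy of $P$ to be $0$ (or another projective). Hence $\extFunc^n(P,C)=0$ for all $n>0$, so $\text{Proj}_\extFunc(\extriangulatedCat)\subseteq\cotorsionClass$. Lemma~\ref{lmm: generated} then gives $\cotorsionfreeClass=\cotorsionClass^{\perp_1}=\cotorsionClass^{\perp_{>0}}$.

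\textbf{Step 2: $(\cotorsionClass,\cotorsionfreeClass)$ is a cotorsion pair.} The equality $\cotorsionClass^{\perp_1}=\cotorsionfreeClass$ holds by definition. For the other condition, $\cotorsionClass\subseteq{}^{\perp_1}\cotorsionfreeClass$ is immediate, so it remains to prove ${}^{\perp_1}\cotorsionfreeClass\subseteq\cotorsionClass$; this is the main step.

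The key observation is that the cosyzygies of $C$ lie in $\cotorsionfreeClass$. Using enough $\extFunc$-injectives, choose conflations $\cosyz[i]{C}\to I_i\to\cosyz[i+1]{C}$ with $I_i$ injective. For $X\in\cotorsionClass$, Proposition-Definition~\ref{prp-dfn: positive}(2) gives
\[
\extFunc^k(X,\cosyz[m]{C})\simeq\extFunc^{k+m}(X,C)=0
\]
for all $k\ge1$ and $m\ge0$. Hence every $\cosyz[m]{C}$ lies in $\cotorsionClass^{\perp_{>0}}=\cotorsionfreeClass$.

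Now let $Z\in{}^{\perp_1}\cotorsionfreeClass$ and $n\ge1$. Then
\[
\extFunc^n(Z,C)\simeq\extFunc(Z,\cosyz[n-1]{C})=0,
\]
where the isomorphism is Proposition-Definition~\ref{prp-dfn: positive}(2) together with $\extFunc^n_\Omega\simeq\extFunc^n_\Sigma$, and the vanishing holds because $\cosyz[n-1]{C}\in\cotorsionfreeClass$. Thus $Z\in\cotorsionClass$.

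\textbf{Step 3: heredity and the generation claim.} Heredity is $\cotorsionClass\subseteq{}^{\perp_{>0}}\cotorsionfreeClass$. This is equivalent to $\cotorsionfreeClass\subseteq\cotorsionClass^{\perp_{>0}}$, which was shown in Step 1. Finally, the cotorsion pair generated by ${}^{\perp_{>0}}C$ has right class $({}^{\perp_{>0}}C)^{\perp_1}=\cotorsionfreeClass$. A cotorsion pair is determined by its right class (Remark~\ref{rmk: orthogonality}), so the generated pair is exactly $({}^{\perp_{>0}}C,({}^{\perp_{>0}}C)^{\perp_1})$.

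The only delicate point is the dimension-shift identification $\extFunc^n(-,C)\simeq\extFunc(-,\cosyz[n-1]{C})$. This is exactly why both enough projectives and enough injectives are assumed: projectives are needed for Lemma~\ref{lmm: generated}, and injectives are needed for the cosyzygy description of $\extFunc^n$.
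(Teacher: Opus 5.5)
Your proof is correct. Steps 1 and 3 coincide with the paper's argument: ${}^{\perp_{>0}}C$ is closed under cocones and contains $\text{Proj}_\extFunc(\extriangulatedCat)={}^{\perp_{>0}}\extriangulatedCat$, so Lemma~\ref{lmm: generated} gives $\cotorsionfreeClass=\cotorsionClass^{\perp_{>0}}$, and heredity follows.

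The difference lies in the key inclusion ${}^{\perp_1}\cotorsionfreeClass\subseteq\cotorsionClass$. The paper argues formally and symmetrically. Since $\cotorsionfreeClass=\cotorsionClass^{\perp_{>0}}$ is closed under cones and contains $\text{Inj}_\extFunc(\extriangulatedCat)$, the dual of Lemma~\ref{lmm: generated} gives ${}^{\perp_1}\cotorsionfreeClass={}^{\perp_{>0}}\cotorsionfreeClass$. Then ${}^{\perp_{>0}}(\cotorsionClass^{\perp_{>0}})=\cotorsionClass$ follows from the reversal of inclusions, because $C\in\cotorsionClass^{\perp_{>0}}$.

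You instead exhibit explicit test objects. The cosyzygies $\cosyz[m]{C}$ lie in $\cotorsionfreeClass$, and the shift $\extFunc^n(Z,C)\simeq\extFunc(Z,\cosyz[n-1]{C})$ shows that $\perp_1$-orthogonality to these objects alone forces $Z\in{}^{\perp_{>0}}C$. In effect, you inline the syzygy argument behind the dual lemma, specialized to the objects that matter.

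Each route has its advantage:
\begin{itemize}
\item The paper's route is shorter once the lemma is available. It also yields the symmetric statement that both ${}^{\perp_1}\cotorsionfreeClass={}^{\perp_{>0}}\cotorsionfreeClass$ and $\cotorsionClass^{\perp_1}=\cotorsionClass^{\perp_{>0}}$ hold.
\item Your route is more elementary. It shows the pair is cogenerated by $\{\cosyz[m]{C}\}_{m\ge0}$. It also makes visible that the cotorsion-pair property uses only the cosyzygy description of $\extFunc^n$; Lemma~\ref{lmm: generated}, and hence enough $\extFunc$-projectives, enters only for heredity.
\end{itemize}
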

\begin{proof}
    Recall that $({}^{\perp_1}(({}^{\perp_{>0}}C)^{\perp_1}),({}^{\perp_{>0}}C)^{\perp_1})$ is the cotorsion pair in $\extriangulatedCat$ generated by ${}^{\perp_{>0}}C$. Note that ${}^{\perp_{>0}}C$ is closed under cocones in $\extriangulatedCat$ by the exact sequences in Definition~\ref{dfn: positive}, and moreover that $\text{Proj}_\extFunc(\extriangulatedCat)={}^{\perp_{>0}}\extriangulatedCat\subseteq{}^{\perp_{>0}}C$ by Proposition-Definition~\ref{prp-dfn: positive}. Lemma~\ref{lmm: generated} then gives us that $({}^{\perp_{>0}}C)^{\perp_1}=({}^{\perp_{>0}}C)^{\perp_{>0}}$, which is closed under cones in $\extriangulatedCat$. By its dual, we conclude that
    \[
        {}^{\perp_1}(({}^{\perp_{>0}}C)^{\perp_1})={}^{\perp_{>0}}(({}^{\perp_{>0}}C)^{\perp_1})={}^{\perp_{>0}}(({}^{\perp_{>0}}C){}^{\perp_{>0}})={}^{\perp_{>0}}C.
    \] 
\end{proof}

\begin{Prp}[]\label{prp: generated}
    Let $\extriangulatedTriple$ have exact products, an $\extFunc$-injective cogenerator $Q$, and all objects with finite $\extFunc$-injective dimension, and let $C\in\extriangulatedCat$ be a producer of a silting subcategory of $\extriangulatedCat$. If $\extriangulatedCat$ also has enough $\extFunc$-projectives, then $(\text{Prod}(C)^\lor,\text{Prod}(C)^\land)=({}^{\perp_{>0}}C,({}^{\perp_{>0}}C)^{\perp_1})$, which equals the cotorsion pair in $\extriangulatedCat$ generated by ${}^{\perp_{>0}}C$.
\end{Prp}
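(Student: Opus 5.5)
The plan is to identify the left-hand class of the cotorsion pair $\Psi(\text{Prod}(C))=(\text{Prod}(C)^\lor,\text{Prod}(C)^\land)$ with ${}^{\perp_{>0}}C$. The right-hand class is then forced, and Lemma~\ref{lmm: generated'} identifies the result with the cotorsion pair generated by ${}^{\perp_{>0}}C$.

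Write $\subcat:=\text{Prod}(C)$, which is silting by hypothesis, and $\cotorsionPair:=(\subcat^\lor,\subcat^\land)$. By Theorem~\ref{thm: AT} this is a bounded hereditary complete cotorsion pair with $\cotorsionKernel=\subcat$. Equivalently, one may use Proposition~\ref{prp: bijections} together with the fact that $\subcat=\text{Prod}(\subcat)$.

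First inclusion, $\cotorsionClass\subseteq{}^{\perp_{>0}}C$. Heredity gives $\cotorsionClass\subseteq{}^{\perp_{>0}}\cotorsionfreeClass$. Moreover $C\in\subcat\subseteq\cotorsionfreeClass$, so $\cotorsionClass\subseteq{}^{\perp_{>0}}C$.

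Reverse inclusion, ${}^{\perp_{>0}}C\subseteq\cotorsionClass$. Let $B\in{}^{\perp_{>0}}C$.
\begin{enumerate}
    \item \emph{Upgrade to all of $\subcat$.} Since $\extriangulatedCat$ has enough $\extFunc$-projectives, Condition~\ref{cdt: mono} holds by Example~\ref{exp: mono}. So for each $n>0$ and each set $J$ there is a monomorphism $\extFunc^n(B,C^J)\hookrightarrow\extFunc^n(B,C)^J=0$. Additivity of $\extFunc^n$ handles summands, giving $B\in{}^{\perp_{>0}}\subcat$.
    \item \emph{Induct along finite $\subcat$-resolutions.} Since $\cotorsionfreeClass=\subcat^\land$, I show $B\in{}^{\perp_{>0}}\subcat^\land_m$ by induction on $m$; the case $m=0$ is step 1. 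For a conflation $S'\to S\to Y$ with $S\in\subcat$ and $S'\in\subcat^\land_{m-1}$, the long exact sequence of Definition~\ref{dfn: positive} gives
    \[
        \extFunc^n(B,S)\to\extFunc^n(B,Y)\to\extFunc^{n+1}(B,S').
    \]
    Both outer terms vanish, so $\extFunc^n(B,Y)=0$ for all $n>0$.
    \item \emph{Conclude.} Hence $B\in{}^{\perp_1}\cotorsionfreeClass=\cotorsionClass$.
\end{enumerate}
This shows $\cotorsionClass={}^{\perp_{>0}}C$.

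Consequently $\cotorsionfreeClass=\cotorsionClass^{\perp_1}=({}^{\perp_{>0}}C)^{\perp_1}$, because a cotorsion pair is determined by its left class. Finally, Lemma~\ref{lmm: generated'} applies, since $\extriangulatedCat$ has enough $\extFunc$-projectives and enough $\extFunc$-injectives (the latter by Remark~\ref{rmk: products}(2)). It states that $({}^{\perp_{>0}}C,({}^{\perp_{>0}}C)^{\perp_1})$ is exactly the cotorsion pair generated by ${}^{\perp_{>0}}C$, which completes the argument.

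The only delicate point is step 1 of the reverse inclusion: passing from vanishing against $C$ to vanishing against arbitrary products of $C$. This is precisely where enough $\extFunc$-projectives is needed, through Condition~\ref{cdt: mono}.
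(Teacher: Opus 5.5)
Your proof is correct, but it works with the left classes, whereas the paper works with the right classes.

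\textbf{The paper's route.} It uses Lemma~\ref{lmm: generated'} and the fact that a cotorsion pair is determined by either class to reduce to proving $({}^{\perp_{>0}}C)^{\perp_{>0}}=\text{Prod}(C)^\land$. The inclusion $\text{Prod}(C)^\land\subseteq({}^{\perp_{>0}}C)^{\perp_{>0}}$ comes from the minimality description of $\text{Prod}(C)^\land$ in \cite[Proposition~4.7(3)]{adachi2022hereditary}. The reverse inclusion comes from $\text{Prod}(C)^\lor\subseteq{}^{\perp_{>0}}C$ (again via that minimality), followed by $(\text{Prod}(C)^\lor)^{\perp_{>0}}\subseteq(\text{Prod}(C)^\lor)^{\perp_1}=\text{Prod}(C)^\land$.

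\textbf{Your route.} You prove $\text{Prod}(C)^\lor={}^{\perp_{>0}}C$ directly. The inclusion $\subseteq$ is immediate from heredity, with no closure argument. Your steps 1--2 are an elementwise form of the paper's inclusion $\text{Prod}(C)^\land\subseteq({}^{\perp_{>0}}C)^{\perp_{>0}}$, and $\cotorsionClass={}^{\perp_1}\cotorsionfreeClass$ then finishes.

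\textbf{What your route buys.}
\begin{itemize}
\item It avoids Lemma~\ref{lmm: generated} and its dual, as well as the minimality result of Adachi--Tsukamoto.
\item It makes explicit the passage from $\extFunc^n(B,C)=0$ to $\extFunc^n(B,C^J)=0$ for all $n>0$. The paper attributes this step to Remark~\ref{rmk: products}(1), which only concerns $\extFunc^1$. So the paper implicitly also needs the identity $({}^{\perp_{>0}}C)^{\perp_1}=({}^{\perp_{>0}}C)^{\perp_{>0}}$ from the proof of Lemma~\ref{lmm: generated'}, or Condition~\ref{cdt: mono}.
\item Your final appeal to Lemma~\ref{lmm: generated'} is unnecessary. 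Once the right class is $({}^{\perp_{>0}}C)^{\perp_1}$, the pair is by definition the one generated by ${}^{\perp_{>0}}C$. Your argument therefore uses enough $\extFunc$-projectives only through Condition~\ref{cdt: mono}, and proves the statement under that weaker hypothesis.
\end{itemize}

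\textbf{What the paper's route buys.} It produces the description $\text{Prod}(C)^\land=({}^{\perp_{>0}}C)^{\perp_{>0}}$ of the right class along the way. You can recover this as well: heredity gives $\cotorsionfreeClass\subseteq\cotorsionClass^{\perp_{>0}}=({}^{\perp_{>0}}C)^{\perp_{>0}}\subseteq({}^{\perp_{>0}}C)^{\perp_1}=\cotorsionfreeClass$.
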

\begin{proof}
    Recall that $(\text{Prod}(C)^\lor,\text{Prod}(C)^\land)$ is a cotorsion pair in $\extriangulatedCat$ by Theorem~\ref{thm: AT}. Suppose that $\extriangulatedCat$ also has enough $\extFunc$-projectives. By Lemma~\ref{lmm: generated'} and the fact that cotorsion pairs are uniquely determined by either of their classes, it suffices to show that $({}^{\perp_{>0}}C)^{\perp_{>0}}=\text{Prod}(C)^\land$.

    Since $\text{Prod}(C)$ is a presilting subcategory of $\extriangulatedCat$, by applying \cite[Proposition~4.7(3)]{adachi2022hereditary} to $\mathcal{W}=\text{Prod}(C)=\mathcal{X}$ we have that $\text{Prod}(C)^\land$ is the smallest subcategory of $\extriangulatedCat$ which contains $\text{Prod}(C)$ and is closed under summands, extensions and cones in $\extriangulatedCat$, and dually for $\text{Prod}(C)^\lor$. Moreover, it follows from Remark~\ref{rmk: heredity}(1) that $\text{Prod}(C)^\lor\subseteq{}^{\perp_{>0}}C$. Furthermore, since $\extriangulatedTriple$ has products and $C\in({}^{\perp_{>0}}C)^{\perp_{>0}}$ trivially, Remark~\ref{rmk: products}(1) implies that $\text{Prod}(C)\subseteq({}^{\perp_{>0}}C)^{\perp_{>0}}$, by which $\text{Prod}(C)^\land\subseteq({}^{\perp_{>0}}C)^{\perp_{>0}}$. The first inclusion then gives us
    \begin{align*}
        ({}^{\perp_{>0}}C)^{\perp_{>0}} &\subseteq (\text{Prod}(C)^\lor)^{\perp_{>0}} \\
                                        &\subseteq (\text{Prod}(C)^\lor)^{\perp_1} \\
                                        &= \text{Prod}(C)^\land,
    \end{align*}
    which completes the proof.
\end{proof}

\subsubsection{Finite positive global dimension}\label{ssec: dimension}

\begin{Dfn}\cite[Definition~3.28]{gorsky2021positive}\label{dfn: global}
    Let $\extriangulatedTriple$ have positive extensions. If there exists $m\in\mathbb{Z}_{\ge0}$ such that $\extFunc^m\neq 0$ and $\extFunc^{m+1}=0$ (equivalently, $\extFunc^m\neq 0$ and $\extFunc^n=0$ for all $n>m$), the \emph{positive global dimension} of $\extriangulatedTriple$ is finite and equals $m$; otherwise, it is $\infty$.
\end{Dfn}

\begin{Lmm}\label{lmm: dimension}
    Let $\extriangulatedTriple$ have positive global dimension $m\in\mathbb{Z}_{\ge0}$. Suppose that $\subcat$ is a presilting subcategory of $\extriangulatedCat$. Then $\subcat^\land=\subcat^\land_m$ and $\subcat^\lor=\subcat^\lor_m$.
\end{Lmm}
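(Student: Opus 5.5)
We show $\subcat^\land\subseteq\subcat^\land_m$; the statement for $\subcat^\lor$ is dual, using Lemma~\ref{lmm: shifting}(b) in place of (a). The inclusion $\subcat^\land_m\subseteq\subcat^\land$ is trivial. So let $C\in\subcat^\land$ and choose a finite $\subcat$-resolution $\{L_{i+1}\to S_i\to L_i\}_{i\ge0}$ of $C$ of length $k$, i.e.\ $L_{k+1}=0$ and hence $L_k\simeq S_k\in\subcat$. If $k\le m$, then $C\in\subcat^\land_m$ by the recursive definition and there is nothing to prove. Otherwise $k>m$, and the plan is to show that the truncated piece $L_m$ already lies in $\subcat$. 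Cutting the resolution at step $m$ then exhibits $C\in\subcat^\land_m$.

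\emph{Step 1.} We first show that $L_m\in{}^{\perp_{>0}}\subcat$. Since $\subcat$ is presilting, $\subcat\subseteq\subcat^{\perp_{>0}}$, so Lemma~\ref{lmm: shifting}(a) applies with $D\in\subcat$. It gives $\extFunc^j(L_m,D)\simeq\extFunc^{m+j}(C,D)$ for all $j>0$. The right-hand side vanishes because the positive global dimension is $m$. Hence $L_m\in{}^{\perp_{>0}}\subcat$.

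\emph{Step 2.} Next we show that every conflation $L_{i+1}\to S_i\to L_i$ with $i\ge m$ splits, so that $L_m\in\subcat$. We argue by descending induction, starting from $L_k\in\subcat$. Suppose $L_{i+1}\in\subcat$ for some $m\le i<k$. As in Step 1, $L_i\in{}^{\perp_{>0}}\subcat$, since $\extFunc^j(L_i,D)\simeq\extFunc^{i+j}(C,D)=0$ because $i+j>m$. In particular $\extFunc(L_i,L_{i+1})=0$, so the conflation $L_{i+1}\to S_i\to L_i$ splits. Then $L_i$ is a summand of $S_i\in\subcat$, and since $\subcat$ is closed under summands, $L_i\in\subcat$. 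Descending to $i=m$ gives $L_m\in\subcat$. The conflations $L_{i+1}\to S_i\to L_i$ for $0\le i<m$, together with $L_m\in\subcat$, then show $C\in\subcat^\land_m$.

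This splitting argument is where the main work lies: it needs both the orthogonality $\extFunc(L_i,L_{i+1})=0$, which comes from dimension shifting, and the presilting hypothesis that $\subcat$ is closed under summands. Everything else is bookkeeping with the recursive definitions of $\subcat^\land_m$ and $\subcat^\lor_m$. In the dual case the splitting is of the conflations $L_i\to S_i\to L_{i+1}$, using $\extFunc(L_{i+1},L_i)=0$, which follows from $L_i\in\subcat^{\perp_{>0}}$ via Lemma~\ref{lmm: shifting}(b).
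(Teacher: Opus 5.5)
Your proof is correct and is essentially the paper's argument: both split conflations at positions $i\ge m$ using $\extFunc(L_i,L_{i+1})\simeq\extFunc^{i+1}(C,L_{i+1})=0$ from dimension shifting, and then use closure of $\subcat$ under summands. The difference is only bookkeeping. The paper inducts upward on the resolution length, so it only ever splits the single conflation at position $m$ in a resolution of length $m+1$. You instead fix one resolution of length $k>m$ and split its tail by descending induction; your Step 1 is just the case $i=m$ of Step 2.
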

\begin{proof}
    We prove the equality $\subcat^\land=\subcat^\land_m$; the case for $\subcat^\lor$ is analogous. By definition, it suffices to show that $\subcat^\land\subseteq \subcat^\land_m$, which we will do by induction. Since $\subcat$ is closed under summands in $\extriangulatedCat$, $0\in\subcat$. Thus, $\subcat^\land_0\subseteq \subcat^\land_m$. Suppose that $\subcat^\land_k\subseteq \subcat^\land_m$ for some $k\ge 0$ and let $C\in\subcat^\land_{k+1}$. Then there exists a sequence of $\additiveReal$-conflations 
    \begin{align*}
        L_1\to  &S_0\to L_0, \\
        L_2\to  &S_1\to L_1, \\
                &\vdots \\
        L_{k+1}\to  &S_k\to L_k, \\
        0\to  &S_{k+1}\to L_{k+1},
    \end{align*}
    with $L_0=C$ and $S_i\in\subcat$ for each $0\le i\le k+1$. In particular, this implies that $L_1\in\subcat^\land_k$. By the induction hypothesis, $L_1$ then has an $\subcat$-resolution of length $m$, which we can append to the first $\additiveReal$-conflation in the previous sequence in order to produce an $\subcat$-resolution of $C$ of length $m+1$. Let us relabel this $\subcat$-resolution of $C$ as
    \begin{align*}
        L_1'\to  &S_0'\to C, \\
        L_2'\to  &S_1'\to L_1', \\
                &\vdots \\
        L_{m+1}'\to  &S_m'\to L_m', \\
        0\to  &S_{m+1}'\to L_{m+1}'.
    \end{align*}
    From the last $\additiveReal$-conflation, it follows that $L_{m+1}'\simeq S_{m+1}\in\subcat$. Moreover, by \hyperref[lmm: shifting]{dimension shifting} and the assumption that $\extriangulatedTriple$ has finite positive global dimension $m$, we have that $\extFunc(L_m',L_{m+1}')\simeq \extFunc^{m+1}(C,L_{m+1}')=0$, by which $S_m'\simeq L_{m+1}'\bigoplus L_m'$. Since $\subcat$ is closed under summands in $\extriangulatedCat$, we have that $L_m'\in\subcat$, and by replacing the last two $\additiveReal$-conflations in the previous sequence with $0\to L_m'\to L_m'$ we obtain that $C\in\subcat^\land_m$. Thus, $\subcat^\land_{k+1}\subseteq \subcat^\land_m$. By induction on $k$, we conclude that $\subcat^\land=\subcat^\land_m$.
\end{proof}

\begin{Lmm}[]\label{lmm: n-(co)tilting}\leavevmode
    Let $\extriangulatedCat$ have enough $\extFunc$-projectives and positive global dimension $m\in\mathbb{Z}_{\ge0}$. Then $\text{Proj}_\extFunc(\extriangulatedCat)^\land_m=\extriangulatedCat$.
\end{Lmm}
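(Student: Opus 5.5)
Fix $C\in\extriangulatedCat$ and, since $\extriangulatedCat$ has enough $\extFunc$-projectives, choose $\additiveReal$-conflations $\syz[i+1]{C}\to P_i\to\syz[i]{C}$ with $P_i\in\text{Proj}_\extFunc(\extriangulatedCat)$ for $0\le i\le m-1$, where $\syz[0]{C}=C$. This gives a $\text{Proj}_\extFunc(\extriangulatedCat)$-resolution of $C$ truncated at stage $m$. The goal is to show that the last syzygy $L_m:=\syz[m]{C}$ is itself $\extFunc$-projective. Then the truncated resolution, capped by the conflation $0\to L_m\to L_m$, exhibits $C\in\text{Proj}_\extFunc(\extriangulatedCat)^\land_m$.

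To see that $L_m$ is $\extFunc$-projective, use Proposition-Definition~\ref{prp-dfn: positive}(1): the positive extensions may be taken to be $\extFunc^n_\Omega$. For every $X\in\extriangulatedCat$ we then have $\extFunc(L_m,X)=\extFunc(\syz[m]{C},X)=\extFunc^{m+1}_\Omega(C,X)$. This group vanishes because the positive global dimension is $m$, so $\extFunc^{m+1}=0$. Hence $L_m\in{}^{\perp_1}\extriangulatedCat=\text{Proj}_\extFunc(\extriangulatedCat)$. Alternatively, one can reach the same conclusion via Lemma~\ref{lmm: shifting}(a) with $\subcat=\text{Proj}_\extFunc(\extriangulatedCat)$: every $X$ lies in $\subcat^{\perp_{>0}}$, since $\extFunc^n(P,X)=\extFunc(\syz[n-1]{P},X)$ and syzygies of projectives may be chosen to be $0$. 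This yields $\extFunc^{1}(L_m,X)\simeq\extFunc^{m+1}(C,X)=0$.

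The case $m=0$ needs a separate remark. Then $\extFunc=\extFunc^1=0$, so every object is $\extFunc$-projective and $\text{Proj}_\extFunc(\extriangulatedCat)^\land_0=\text{Proj}_\extFunc(\extriangulatedCat)=\extriangulatedCat$ directly.

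The only delicate point is that the given positive extensions $\extFunc^n$ in Definition~\ref{dfn: global} agree with $\extFunc^n_\Omega$, so that the vanishing of $\extFunc^{m+1}$ can be transported. I would handle this through dimension shifting with the exact sequences of Definition~\ref{dfn: positive}, as in Lemma~\ref{lmm: shifting}. That argument requires only that $\extFunc^n(P,-)=0$ for projective $P$ and $n>0$. This in turn follows inductively from the long exact sequences applied to a conflation $\syz{P}\to P'\to P$, which splits because $P$ is projective.
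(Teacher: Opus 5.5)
Your argument is the paper's: you truncate a $\text{Proj}_\extFunc(\extriangulatedCat)$-resolution at stage $m$, use Proposition-Definition~\ref{prp-dfn: positive}(1) to get $\extFunc(\syz[m]{C},X)\simeq\extFunc^{m+1}(C,X)=0$, and cap with $0\to\syz[m]{C}\to\syz[m]{C}$. Your closing ``delicate point'' is not needed, since the paper simply takes $\extFunc^n=\extFunc^n_\Omega$ once there are enough $\extFunc$-projectives; the justification you sketch for it would also fail, because the split conflation $\syz{P}\to P'\to P$ only yields an injection $\extFunc^n(P,X)\hookrightarrow\extFunc^n(P',X)$ into another projective's group, and the bare exact sequences of Definition~\ref{dfn: positive} do not determine the $\extFunc^n$ at all (for instance, $\extFunc^n=0$ for all $n>1$ satisfies them).
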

\begin{proof}
    Let $C\in\extriangulatedCat$. Note that, since $\extriangulatedCat$ has enough $\extFunc$-projectives, we may consider a $\text{Proj}_\extFunc(\extriangulatedCat)$-resolution of $C$, which in particular is of the form $\{\syz[i+1]{C}\to P_i\to \syz[i]{C}\}_{i\in\mathbb{Z}_{\ge0}}$. Since $\extriangulatedTriple$ has positive global dimension $m$, it follows from Proposition-Definition \ref{prp-dfn: positive}(1) that
    \[
        0 = \extFunc^{m+1}(C,C') = \extFunc(\syz[m]{C},C') 
    \] 
    for all $C'\in\extriangulatedCat$, by which $\syz[m]{C}\in\text{Proj}_\extFunc(\extriangulatedCat)$. Thus, by replacing the $m$-th $\additiveReal$-conflation in the previous sequence with $0\to \syz[m]{C}\to \syz[m]{C}$, we obtain that $C\in\text{Proj}_\extFunc(\extriangulatedCat)^\land_m$.
\end{proof}

\begin{Prp}\label{prp: bounded}
    Let $\extriangulatedCat$ have enough $\extFunc$-projectives, enough $\extFunc$-injectives and finite positive global dimension. Then all cotorsion pairs in $\extriangulatedCat$ are bounded.
\end{Prp}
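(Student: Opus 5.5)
The plan is to show directly that, for an arbitrary cotorsion pair $\cotorsionPair$ in $\extriangulatedCat$, both $\cotorsionClass^\land=\extriangulatedCat$ and $\cotorsionfreeClass^\lor=\extriangulatedCat$. We will use only the positive global dimension $m$ and the fact, recorded in Remark~\ref{rmk: orthogonality}, that $\text{Proj}_\extFunc(\extriangulatedCat)\subseteq\cotorsionClass$ and $\text{Inj}_\extFunc(\extriangulatedCat)\subseteq\cotorsionfreeClass$.

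\textbf{The class $\cotorsionClass$.} By Lemma~\ref{lmm: n-(co)tilting}, which needs enough $\extFunc$-projectives and positive global dimension $m$, we have $\text{Proj}_\extFunc(\extriangulatedCat)^\land_m=\extriangulatedCat$. So each $C\in\extriangulatedCat$ has a finite $\text{Proj}_\extFunc(\extriangulatedCat)$-resolution of length at most $m$. Every term of this resolution lies in $\text{Proj}_\extFunc(\extriangulatedCat)\subseteq\cotorsionClass$, so the same resolution is a finite $\cotorsionClass$-resolution. Hence $C\in\cotorsionClass^\land$.

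\textbf{The class $\cotorsionfreeClass$.} This is the dual statement of Lemma~\ref{lmm: n-(co)tilting}, namely $\text{Inj}_\extFunc(\extriangulatedCat)^\lor_m=\extriangulatedCat$, which uses enough $\extFunc$-injectives. For $C\in\extriangulatedCat$, we take an injective coresolution built from cosyzygies, $\{\cosyz[i]{C}\to I_i\to\cosyz[i+1]{C}\}$. By Proposition-Definition~\ref{prp-dfn: positive}(2), for every $C'$ we get
\[
    0=\extFunc^{m+1}_\Sigma(C',C)=\extFunc(C',\cosyz[m]{C}).
\]
Since $\extriangulatedCat$ has enough projectives and injectives, $\extFunc^n_\Omega\simeq\extFunc^n_\Sigma$, so this vanishing is exactly the global dimension hypothesis. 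It shows $\cosyz[m]{C}\in\text{Inj}_\extFunc(\extriangulatedCat)$. We then truncate the coresolution at step $m$ by replacing that conflation with $\cosyz[m]{C}\to\cosyz[m]{C}\to 0$. All terms now lie in $\text{Inj}_\extFunc(\extriangulatedCat)\subseteq\cotorsionfreeClass$, so $C\in\cotorsionfreeClass^\lor$.

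\textbf{Expected obstacle.} The only delicate point is justifying the dualization. We must confirm that the positive extensions in the definition of global dimension can be computed as $\extFunc_\Sigma$, which is where both the enough-projectives and enough-injectives hypotheses enter. Everything else is routine.
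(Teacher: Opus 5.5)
Your proof is correct and follows the paper's argument. The paper likewise combines the inclusions $\text{Proj}_\extFunc(\extriangulatedCat)\subseteq\cotorsionClass$ and $\text{Inj}_\extFunc(\extriangulatedCat)\subseteq\cotorsionfreeClass$ from Remark~\ref{rmk: orthogonality} with Lemma~\ref{lmm: n-(co)tilting} and its dual to get $\cotorsionClass^\land=\extriangulatedCat=\cotorsionfreeClass^\lor$; your explicit cosyzygy argument, which uses $\extFunc^n_\Omega\simeq\extFunc^n_\Sigma$ to transfer the global dimension hypothesis, spells out what the paper compresses into the phrase ``and its dual''.
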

\begin{proof}
    Recall that, for any cotorsion pair $\cotorsionPair$ in $\extriangulatedCat$, we have that $\text{Proj}_\extFunc(\extriangulatedCat)\subseteq\cotorsionClass$ and $\text{Inj}_{\extFunc}(\extriangulatedCat)\subseteq\cotorsionfreeClass$ by Remark~\ref{rmk: orthogonality}. Since Lemma~\ref{lmm: n-(co)tilting} and its dual imply that $\text{Proj}_\extFunc(\extriangulatedCat)^\land=\extriangulatedCat=\text{Inj}_\extFunc(\extriangulatedCat)^\lor$, the assertion follows.
\end{proof}

\begin{Prp}[]\label{prp: n-(co)tilting}
    Let $\extriangulatedCat$ have enough $\extFunc$-projectives and positive global dimension $m\in\mathbb{Z}_{\ge0}$. Then a subcategory $\subcat$ of $\extriangulatedCat$ is silting if, and only if, it is $m$-tilting.
\end{Prp}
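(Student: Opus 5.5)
We prove both implications, using that $\extFunc^n$ vanishes for $n>m$, that silting subcategories are maximal (Lemma~\ref{lmm: maximal}), and the bijection of Theorem~\ref{thm: AT}.

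\emph{Silting implies $m$-tilting.} Let $\subcat$ be silting, and let $\cotorsionPair=(\subcat^\lor,\subcat^\land)$ be the associated bounded hereditary complete cotorsion pair. Since $\text{Proj}_\extFunc(\extriangulatedCat)\subseteq\cotorsionClass=\subcat^\lor$ by Remark~\ref{rmk: orthogonality}, Lemma~\ref{lmm: dimension} gives $\text{Proj}_\extFunc(\extriangulatedCat)\subseteq\subcat^\lor_m$. For the other condition, Lemma~\ref{lmm: n-(co)tilting} gives $\subcat\subseteq\extriangulatedCat=\text{Proj}_\extFunc(\extriangulatedCat)^\land_m$. (This uses that $\text{Proj}_\extFunc(\extriangulatedCat)$ is closed under summands, which holds because it is ${}^{\perp_1}\extriangulatedCat$.)

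\emph{$m$-tilting implies silting.} Let $\subcat$ be presilting with $\text{Proj}_\extFunc(\extriangulatedCat)\subseteq\subcat^\lor_m$; we must show $\text{thick}(\subcat)=\extriangulatedCat$. Each projective lies in $\subcat^\lor\subseteq\text{thick}(\subcat)$. By Lemma~\ref{lmm: n-(co)tilting}, every $C\in\extriangulatedCat$ has a finite $\text{Proj}_\extFunc(\extriangulatedCat)$-resolution. Since thick subcategories are closed under cones, induction along this resolution, starting from its last term, shows that every $C$ lies in $\text{thick}(\subcat)$. Hence $\subcat$ is silting.

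The main obstacle is only notational: making sure the resolutions of the proof of Lemma~\ref{lmm: n-(co)tilting} are the objects counted by $\text{Proj}_\extFunc(\extriangulatedCat)^\land_m$. Note that the condition $\subcat\subseteq\text{Proj}_\extFunc(\extriangulatedCat)^\land_m$ is automatic in this setting and is not needed for the converse.
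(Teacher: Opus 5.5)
Your proof is correct and follows the paper's argument essentially verbatim. The forward direction uses the cotorsion pair $(\subcat^\lor,\subcat^\land)$ from Theorem~\ref{thm: AT} together with Lemma~\ref{lmm: dimension} and Lemma~\ref{lmm: n-(co)tilting}, and your converse is the paper's chain $\text{Proj}_\extFunc(\extriangulatedCat)^\land_m\subseteq(\subcat^\lor_m)^\land_m\subseteq\text{thick}(\subcat)$ written out as an induction along resolutions; your parenthetical about summand-closure of $\text{Proj}_\extFunc(\extriangulatedCat)$ is harmless but not actually needed.
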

\begin{proof}
    Let $\subcat$ be a silting subcategory of $\extriangulatedCat$. Then $(\subcat^\lor,\subcat^\land)$ is a cotorsion pair in $\extriangulatedCat$ by Theorem~\ref{thm: AT}, and in particular $\text{Proj}_\extFunc(\extriangulatedCat)\subseteq\subcat^\lor$. By Lemma~\ref{lmm: dimension}, it follows that $\text{Proj}_\extFunc(\extriangulatedCat)\subseteq\subcat^\lor_m$, whereas $\subcat\subseteq\text{Proj}_\extFunc(\extriangulatedCat)^\land_m$ by Lemma~\ref{lmm: n-(co)tilting}.

    Now, let $\subcat$ be an $m$-tilting subcategory of $\extriangulatedCat$. Then, in particular, $\subcat$ is a presilting subcategory of $\extriangulatedCat$ and $\text{Proj}_\extFunc(\extriangulatedCat)\subseteq\subcat^\lor_m$. Since $\text{Proj}_\extFunc(\extriangulatedCat)^\land_m\subseteq(\subcat^\lor_m)^\land_m\subseteq\text{thick}(\subcat)$, we have that $\text{thick}(\subcat)=\extriangulatedCat$ by Lemma~\ref{lmm: n-(co)tilting}.
\end{proof}

\begin{Rmk}\label{rmk: n-(co)tilting}
    It follows from \cite[Proposition~2.1~\&~Theorem~4.3]{gorsky2023hereditary} that, if $\extriangulatedCat$ has enough $\extFunc$-projectives, enough $\extFunc$-injectives, and positive global dimension $m\in\{0,1\}$, then silting subcategories, $m$-tilting subcategories, and $m$-cotilting subcategories are all equivalent. Proposition~\ref{prp: n-(co)tilting} and its dual then extend this result for all $m\in\mathbb{Z}_{\ge0}$.
\end{Rmk}

\section{Examples}\label{sec: examples}

Throughout this section, let 
\begin{itemize}
    \item $\AUR$ be an associative unital ring;
    \item $Q$ be an injective cogenerator of $\text{Mod}(\AUR)$ (e.g., $Q=\text{Hom}_\mathbb{Z}(\AUR,\mathbb{Q}/\mathbb{Z})$);
    \item $\derivedCat(\AUR)$ be the unbounded derived category of $\text{Mod}(\AUR)$;
    \item $\homotopyCat^b(\text{Inj}(\AUR))$ be the category of cochain complexes of injective $\AUR$-modules that are non-zero in finitely many degrees, considered up to homotopy;
    \item $\Kzn{\AUR}$ be the subcategory of complexes in $\homotopyCat^b(\text{Inj}(\AUR))$ concentrated in the interval $[0,n]$ (i.e., those which are zero elsewhere) for a fixed $n\in\mathbb{Z}_{>0}$.
\end{itemize}

\subsection{A canonical construction}\label{ssec: construction}

We now describe a general procedure for constructing, from a triangulated category with products containing a presilting subcategory with a producer, an extriangulated category which satisfies all of the hypotheses considered throughout Section~\ref{sec: large}. Throughout this section, let $\triangulatedTriple$ be a triangulated category. 

\begin{Rmk}\label{rmk: triangulated}
    By \cite[Proposition~3.22(1)]{nakaoka2019extriangulated}, $\triangulatedCat$ is an extriangulated category with additive bifunctor $\text{Ext}_{\triangulatedCat}^{1}(\ast,-)=\text{Hom}_\triangulatedCat(\ast,\shiftFunc{-})$ and conflations given by sequences of morphisms of the form $A\to B\to C$ which extend to a distinguished triangle $A\to B\to C\to \shiftFunc[1]{A}$. Moreover, it has positive extensions given by $\text{Ext}_{\triangulatedCat}^{n}(\ast,-)=\text{Hom}_\triangulatedCat(\ast,\shiftFunc[n]{-})$ for each $n>1$ by \cite[Corollary~3.23]{gorsky2021positive}. Furthermore, if it has products, they are exact by \cite[Proposition~1.2.1]{neeman2014triangulated}. In the following, we will consider all triangulated categories with this extriangulated structure, which we denote by $(\triangulatedCat,\text{Ext}_{\triangulatedCat}^{1},\Delta)$.
\end{Rmk}

\begin{Dfn}\label{dfn: interval}
    Let $\subcat$ be a presilting subcategory of $\triangulatedCat$. For any integers $p\le q$, we denote
    \[
        \subcat^{[p,q]}:=\shiftFunc[p]{S}\ast\shiftFunc[p+1]{\subcat}\ast\dots\ast\shiftFunc[q]{\subcat}.
    \] 
\end{Dfn}

\begin{Lmm}[]\label{lmm: construction}
    Let $\subcat\subseteq\triangulatedCat$ be such that $\subcat\subseteq{}^{\perp_{>0}}\subcat$. The following conditions are satisfied for $m\in\mathbb{Z}_{>0}$.
    \begin{enumerate}[label=\LmmLbl]
        \item $\subcat^{[-m,0]}\subseteq{}^{\perp_{>0}}\subcat$ and $\subcat^{[-m,0]}\subseteq(\shiftFunc[-m]{\subcat})^{\perp_{>0}}$.
    
        \item $\subcat^{[-m,0]}$ is closed under extensions in $\triangulatedCat$. 
    \end{enumerate}
\end{Lmm}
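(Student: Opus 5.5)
We will argue directly with the long exact sequences of $\text{Hom}_\triangulatedCat$ in the triangulated category, where $\extFunc^n(X,Y)=\text{Hom}_\triangulatedCat(X,\shiftFunc[n]{Y})$ (Remark~\ref{rmk: triangulated}). The basic tool is this: for any object $Y$, the classes ${}^{\perp_{>0}}Y$ and $Y^{\perp_{>0}}$ are closed under extensions (Remark~\ref{rmk: heredity}(1)). Therefore both inclusions in (a) reduce to checking them on the ``generators'' $\shiftFunc[-i]{\subcat}$ for $0\le i\le m$, because $\subcat^{[-m,0]}$ consists of iterated extensions of objects in these shifts.

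\textbf{Part (a).} For $S,S'\in\subcat$, $0\le i\le m$ and $n>0$ we compute
\[
\text{Hom}_\triangulatedCat(\shiftFunc[-i]{S},\shiftFunc[n]{S'})\simeq\text{Hom}_\triangulatedCat(S,\shiftFunc[n+i]{S'})=0,
\]
since $n+i>0$ and $\subcat\subseteq{}^{\perp_{>0}}\subcat$. So $\shiftFunc[-i]{\subcat}\subseteq{}^{\perp_{>0}}\subcat$ for every such $i$, and closure of ${}^{\perp_{>0}}\subcat$ under extensions gives $\subcat^{[-m,0]}\subseteq{}^{\perp_{>0}}\subcat$.

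For the second inclusion we check
\[
\text{Hom}_\triangulatedCat(\shiftFunc[-m]{S'},\shiftFunc[n-i]{S})\simeq\text{Hom}_\triangulatedCat(S',\shiftFunc[n+m-i]{S})=0,
\]
because $n+m-i\ge n>0$. Hence $\shiftFunc[-i]{\subcat}\subseteq(\shiftFunc[-m]{\subcat})^{\perp_{>0}}$, and closure of $(\shiftFunc[-m]{\subcat})^{\perp_{>0}}$ under extensions finishes (a).

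\textbf{Part (b).} We must show $\subcat^{[-m,0]}\ast\subcat^{[-m,0]}\subseteq\subcat^{[-m,0]}$. By associativity of $\ast$ (Remark~\ref{rmk: subcategories}(3)), the left side is a $2(m+1)$-fold product of the shifted classes, namely
\[
\shiftFunc[-m]{\subcat}\ast\cdots\ast\subcat\ast\shiftFunc[-m]{\subcat}\ast\cdots\ast\subcat.
\]
We will reorder this product into increasing shift order, using a swapping rule:
\[
\shiftFunc[a]{\subcat}\ast\shiftFunc[b]{\subcat}\subseteq\shiftFunc[b]{\subcat}\ast\shiftFunc[a]{\subcat}\quad\text{whenever } a\ge b.
\]
\begin{itemize}
\item If $a>b$: a conflation $\shiftFunc[a]{S}\to X\to\shiftFunc[b]{S'}$ has class in $\text{Hom}_\triangulatedCat(\shiftFunc[b]{S'},\shiftFunc[a+1]{S})\simeq\text{Hom}_\triangulatedCat(S',\shiftFunc[a-b+1]{S})=0$, so it splits and $X\simeq\shiftFunc[b]{S'}\oplus\shiftFunc[a]{S}$.
\item If $a=b$: the class lives in $\text{Hom}_\triangulatedCat(S',\shiftFunc[1]{S})=0$, so again it splits, and since $\subcat$ is additive we get $\shiftFunc[a]{\subcat}\ast\shiftFunc[a]{\subcat}=\shiftFunc[a]{\subcat}$.
\end{itemize}
Repeatedly applying these rules (a bubble sort using associativity), we move every factor with smaller shift to the left and merge equal shifts. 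This lands the whole product inside $\shiftFunc[-m]{\subcat}\ast\cdots\ast\subcat=\subcat^{[-m,0]}$.

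\textbf{Main obstacle.} The delicate point is the bookkeeping in (b). We need that $\ast$ is monotone in each argument, and that swapping two adjacent factors inside a longer product is legitimate, i.e.\ that $\mathcal{A}\ast(\mathcal{B}\ast\mathcal{C})\ast\mathcal{E}\subseteq\mathcal{A}\ast(\mathcal{C}\ast\mathcal{B})\ast\mathcal{E}$ once $\mathcal{B}\ast\mathcal{C}\subseteq\mathcal{C}\ast\mathcal{B}$. Both follow from associativity and the definition of $\ast$. We also need $0\in\subcat$ (closure under summands, or additivity) so that idle factors can be absorbed.
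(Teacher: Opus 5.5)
Your proposal is correct and follows essentially the same route as the paper. For (a), the paper runs an induction on $m$ through the triangle $B\to C\to D\to B[1]$ with $D\in\mathcal{S}$ and $B[1]\in\mathcal{S}^{[-k,0]}$; this is your argument (extension-closure of ${}^{\perp_{>0}}\mathcal{S}$ and $(\mathcal{S}[-m])^{\perp_{>0}}$, checked on the shifts $\mathcal{S}[-i]$ for $0\le i\le m$) written out as an explicit induction. For (b), the paper likewise combines $\mathcal{S}\ast\mathcal{S}\subseteq\mathcal{S}$ with the swap $\mathcal{S}[q]\ast\mathcal{S}[p]\subseteq\mathcal{S}[p]\ast\mathcal{S}[q]$ for $p\le q$ and regroups the doubled product exactly as your bubble sort does; the only difference is that the paper cites the swap from Aihara--Iyama, whereas you prove it directly by the splitting argument.
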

\begin{proof}\leavevmode
    \begin{enumerate}[label=\LmmLbl]
    \item Let $S\in\subcat$. We will proceed by induction on $m$. If $C\in\subcat^{[-1,0]}$, then there exists a distinguished triangle $B\to C\to D\to \shiftFunc[1]{B}$ with $D\in\subcat$ and $B\in\shiftFunc[-1]{\subcat}$, to which we can apply $\text{Hom}_\triangulatedCat(-,S)$ in order to obtain an exact sequence of the form
            \begin{equation}\label{eq: sequence}
                \text{Hom}_\triangulatedCat(\shiftFunc[1]{B},\shiftFunc[n]{S})\to \text{Hom}_\triangulatedCat(D,\shiftFunc[n]{S})\to \text{Hom}_\triangulatedCat(C,\shiftFunc[n]{S})\to \text{Hom}_\triangulatedCat(B,\shiftFunc[n]{S})
            \end{equation}
            for each $n>0$. Since $\subcat\subseteq{}^{\perp_{>0}}\subcat$ by hypothesis, $D,\shiftFunc[1]{B}\in\subcat$, and $\text{Hom}_\triangulatedCat(B,\shiftFunc[n]{S})\simeq\text{Hom}_\triangulatedCat(\shiftFunc[1]{B},\shiftFunc[n+1]{S})$ for each $n>0$, it follows that $C\in{}^{\perp_{>0}}\subcat$. Thus, $\subcat^{[-1,0]}\subseteq{}^{\perp_{>0}}\subcat$.

            Now, suppose that $\subcat^{[-k,0]}\subseteq{}^{\perp_{>0}}\subcat$ for some $k\ge1$ and let $C\in\subcat^{[-(k+1),0]}$. By associativity of $\ast$, there exists a distinguished triangle $B\to C\to D\to \shiftFunc[1]{B}$ with $D\in\subcat$ and $B\in(\shiftFunc[-(k+1)]{\subcat}\ast\shiftFunc[-k]{\subcat}\ast\dots\ast\shiftFunc[-1]{\subcat})=\shiftFunc[-1]{\subcat^{[-k,0]}}$. By applying $\text{Hom}_\triangulatedCat(-,S)$ to it, we obtain an exact sequence of the form (\ref{eq: sequence}) for each $n>0$. Since $D\in\subcat$ and $\shiftFunc[1]{B}\in\subcat^{[-k,0]}$, it follows from the induction hypothesis that $\subcat^{[-(k+1),0]}\subseteq{}^{\perp_{>0}}\subcat$. The inclusion $\subcat^{[-m,0]}\subseteq(\shiftFunc[-m]{\subcat})^{\perp_{>0}}$ is proved analogously.

        \item We first prove that $\subcat\ast\subcat\subseteq\subcat$. Let $S\to E\to S'\to \shiftFunc[1]{S}$ be a distinguished triangle with $S,S'\in\subcat$. Then $\text{Ext}_{\triangulatedCat}^{1}(S',S)=0$, which implies that $E=S\bigoplus S'$ in $\triangulatedCat$. By our assumption that all subcategories of an additive category are additive subcategories, it follows that $E\in\subcat$. 

        Now, let $m\in\mathbb{Z}_{>0}$. By the proof of \cite[Lemma~2.15(b)]{aihara2012silting}, we know that $\shiftFunc[q]{\subcat}\ast\shiftFunc[p]{\subcat}\subseteq\shiftFunc[p]{\subcat}\ast\shiftFunc[q]{\subcat}$ whenever $p\le q$. Since $\subcat\ast\subcat\subseteq\subcat$ implies that $\shiftFunc[z]{\subcat}\ast\shiftFunc[z]{\subcat}\subseteq\shiftFunc[z]{\subcat}$ for each $z\in\mathbb{Z}$, and moreover $\ast$ is associative, it follows that
            \[
                \subcat^{[-m,0]}\ast\subcat^{[-m,0]} \subseteq (\shiftFunc[-m]{\subcat}\ast\shiftFunc[-m]{\subcat})\ast(\shiftFunc[-(m-1)]{\subcat}\ast\shiftFunc[-(m-1)]{\subcat})\dots\ast(\subcat\ast\subcat) \subseteq \subcat^{[-m,0]}.
            \]
    \end{enumerate}
\end{proof}

\begin{Prp}[]\label{prp: construction}
    Let $\subcat\subseteq\triangulatedCat$ be such that $\subcat\subseteq{}^{\perp_{>0}}\subcat$. The following conditions are satisfied for $m\in\mathbb{Z}_{>0}$.
    \begin{enumerate}[label=\PrpLbl]
        \item $(\subcat^{[-m,0]},\extFunc_{[-m,0]},\additiveReal_{[-m,0]})$ is an extriangulated category, where
            \begin{itemize}
                \item $\extFunc_{[-m,0]}(C,A):=\text{Ext}_{\triangulatedCat}^{1}(C,A)$ for all $A,C\in\subcat^{[-m,0]}$;
                \item $A\to B\to C$ is an $\additiveReal_{[-m,0]}$-conflation whenever $A\to B\to C\to \shiftFunc[1]{A}$ is a distinguished triangle in $\triangulatedCat$ with $A,B,C\in\subcat^{[-m,0]}$.
            \end{itemize}
            In particular, we have that $\subcat\subseteq\text{Inj}_{\extFunc_{[-m,0]}}(\subcat^{[-m,0]})$ and $\shiftFunc[-m]{\subcat}\subseteq\text{Proj}_{\extFunc_{[-m,0]}}(\subcat^{[-m,0]})$.

        \item In $(\subcat^{[-m,0]},\extFunc_{[-m,0]},\additiveReal_{[-m,0]})$, we have the equality
            \[
                \subcat^\lor_m=\subcat^{[-m,0]}=(\shiftFunc[-m]{\subcat})^\land_m.
            \] 
            In particular, it follows that this category has both enough $\extFunc_{[-m,0]}$-projectives and $\extFunc_{[-m,0]}$-injectives, which implies that it has positive extensions, given by
            \[
                (\extFunc_{[-m,0]})^n(C,A):=\extFunc_{[-m,0]}(\syz[n-1]{C},A)\simeq\extFunc_{[-m,0]}(C,\cosyz[n-1]{A})
            \] 
            for all $A,C\in\subcat^{[-m,0]}$ and $n>1$, as well as positive global dimension at most $m$.

        \item For all $A,C\in\subcat^{[-m,0]}$ and $n>1$, we have that
            \[
                (\extFunc_{[-m,0]})^n(C,A)\simeq\text{Ext}_{\triangulatedCat}^{n}(C,A).
            \] 
            
        \item Suppose that $\subcat$ is also closed under summands in $\triangulatedCat$, i.e., that $\subcat$ is a presilting subcategory of $\triangulatedCat$. Then $\text{Inj}_{\extFunc_{[-m,0]}}(\subcat^{[-m,0]})=\subcat$ and $\text{Proj}_{\extFunc_{[-m,0]}}(\subcat^{[-m,0]})=\shiftFunc[-m]{\subcat}$. Moreover, for $\subcat'\subseteq\subcat^{[-m,0]}$, $\subcat'$ is a presilting subcategory of $\subcat^{[-m,0]}$ if, and only if, it is a presilting subcategory of $\triangulatedCat$.

        \item Suppose that there exists $Q\in\triangulatedCat$ such that $\text{Prod}(Q)=\subcat$. Then $Q$ is an $\extFunc_{[-m,0]}$-injective cogenerator of $\subcat^{[-m,0]}$.
    \end{enumerate}
\end{Prp}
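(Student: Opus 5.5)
I will treat the five parts in order, with (b) as the technical core on which (c)--(e) depend.

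For (a), Lemma~\ref{lmm: construction}(b) says that $\subcat^{[-m,0]}$ is closed under extensions in $\triangulatedCat$. By \cite[Remark~2.18]{nakaoka2019extriangulated}, an extension-closed subcategory of an extriangulated category inherits an extriangulated structure by restricting $\extFunc$ and $\additiveReal$. Here this is $\text{Ext}^1_\triangulatedCat$ together with triangles whose three terms lie in $\subcat^{[-m,0]}$, which gives the first assertion. For the second assertion, Lemma~\ref{lmm: construction}(a) yields $\text{Ext}^1_\triangulatedCat(C,S)=0$ for $C\in\subcat^{[-m,0]}$ and $S\in\subcat$, and $\text{Ext}^1_\triangulatedCat(\shiftFunc[-m]{S},C)=0$. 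It remains to note that $\subcat,\shiftFunc[-m]{\subcat}\subseteq\subcat^{[-m,0]}$, which holds because $0\in\subcat$ by additivity.

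For (b), I will first prove $\subcat^{[-m,0]}\subseteq\subcat^\lor_m$ by induction on $m$. Given $C\in\subcat^{[-m,0]}$, write $C$ as the middle term of a triangle $\shiftFunc[-m]{S_m}\to C\to C'\to \shiftFunc[-m+1]{S_m}$ with $C'\in\subcat^{[-m+1,0]}$. Rotating the triangle gives $C\to C'\to \shiftFunc[-m+1]{S_m}$, and one checks that all three terms lie in $\subcat^{[-m,0]}$. Iterating peels off layers until only $\subcat$-terms remain, producing a $\subcat$-coresolution of length $m$. It is cleaner to split from the other side: use a triangle $B\to C\to S_0\to \shiftFunc[1]{B}$ with $S_0\in\subcat$ and $B\in\shiftFunc[-1]{\subcat^{[-m+1,0]}}$, then rotate it to $C\to S_0\to \shiftFunc[1]{B}$ with $\shiftFunc[1]{B}\in\subcat^{[-m+1,0]}\subseteq\subcat^{[-m,0]}$. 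This is a conflation in the subcategory, and induction finishes the argument. The reverse inclusion $\subcat^\lor_m\subseteq\subcat^{[-m,0]}$ also follows by induction: a conflation $C\to S\to L$ with $L\in\subcat^{[-m+1,0]}$ rotates to a triangle exhibiting $C\in\shiftFunc[-1]{L}\ast\subcat$, which lies in $\subcat^{[-m,0]}$. The description via $(\shiftFunc[-m]{\subcat})^\land_m$ is proved dually. Consequently every object has an $\extFunc$-injective coresolution and an $\extFunc$-projective resolution of length at most $m$. Proposition-Definition~\ref{prp-dfn: positive} then provides positive extensions, and the two descriptions of $\extFunc^n$ agree. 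Global dimension at most $m$ follows by dimension shifting (Lemma~\ref{lmm: shifting}), since the $m$-th cosyzygy is $\extFunc$-injective.

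For (c), I will induct on $n$ using cosyzygies. Take $A\to S\to \cosyz{A}$ with $S\in\subcat$, which is a triangle in $\triangulatedCat$. The long exact sequence for $\text{Hom}_\triangulatedCat(C,-)$ gives $\text{Ext}^{n-1}_\triangulatedCat(C,\cosyz{A})\cong\text{Ext}^n_\triangulatedCat(C,A)$ for $n>1$, because $\text{Ext}^{k}_\triangulatedCat(C,S)=0$ for $k>0$ by Lemma~\ref{lmm: construction}(a). This is the step I expect to need care: the vanishing must hold for all $k\ge1$, including the boundary degree in the exact sequence, and that is exactly what Lemma~\ref{lmm: construction}(a) supplies.

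For (d), I will show that any $\extFunc$-injective $I$ is a summand of some $S\in\subcat$. Take $I\to S\to \cosyz{I}$; this conflation splits because $I$ is injective, so $I$ is a summand of $S$, and closure of $\subcat$ under summands gives $I\in\subcat$. The projective case is dual. The presilting equivalence then follows from (c), since the $\extFunc^n$ agree with the triangulated $\text{Ext}^n$ and summands coincide. For (e), the triangulated products restrict to the subcategory because $\subcat^{[-m,0]}$ is product-closed: products are exact and $\text{Prod}(\subcat)=\subcat$. The coresolution from (b) gives an inflation $C\to S$ with $S\in\subcat=\text{Prod}(Q)$. Writing $S\oplus S'=Q^J$, the composite $C\to Q^J$ is an inflation, being the composite of two inflations with $S\to S\oplus S'$ split.
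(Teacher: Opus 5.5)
Your proposal is correct and follows the paper's proof essentially step by step. Part (a) is the restriction to the extension-closed subcategory. Part (b) uses the rotated triangle $C\to S_0\to \shiftFunc[1]{B}$ and induction. Part (c) uses the long exact sequence along $A\to S\to\cosyz{A}$ together with Lemma~\ref{lmm: construction}(a). Part (d) is the splitting of $I\to S\to\cosyz{I}$, and part (e) composes $C\to S$ with the split inflation $S\to Q^J$.

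The first ``peeling'' idea you float in (b) would not by itself give a $\subcat$-coresolution, because its middle terms need not lie in $\subcat$. The version you settle on is exactly the paper's argument.
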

\begin{proof}\leavevmode
    \begin{enumerate}[label=\PrpLbl] 
        \item The first assertion follows from Lemma~\ref{lmm: construction}(b) and \cite[Remark~2.18]{nakaoka2019extriangulated}. The second one is then a consequence of Lemma~\ref{lmm: construction}(a).

        \item Let us first prove that $\subcat^\lor_m=\subcat^{[-m,0]}$ in $(\subcat^{[-m,0]},\extFunc_{[-m,0]},\additiveReal_{[-m,0]})$ by induction on $m$; the proof of the remaning equality is analogous. If $C\in\subcat^{[-1,0]}$, there exists a distinguished triangle $B\to C\to D\to \shiftFunc[1]{B}$ with $D\in\subcat$ and $B\in\shiftFunc[-1]{\subcat}$, which we can rotate into the distinguished triangle $C\to D\to \shiftFunc[1]{B}\to \shiftFunc[1]{C}$, inducing an $\additiveReal_{[-1,0]}$-conflation $C\to D\to \shiftFunc[1]{B}$ with $D,\shiftFunc[1]{B}\in\subcat$. Thus, $C\in\subcat^\lor_1$. Now, suppose that the assertion is true for some $k\ge1$ and let $C'\in\subcat^{[-(k+1),0]}$. Similarly, there exists a distinguished triangle $B'\to C'\to D'\to \shiftFunc[1]{B'}$ with $D'\in\subcat$ and $B'\in\shiftFunc[-1]{\subcat^{[-k,0]}}$, which rotates into $C'\to D'\to \shiftFunc[1]{B'}\to \shiftFunc[1]{C'}$, inducing an $\additiveReal_{[-(k+1),0]}$-conflation $C'\to D'\to \shiftFunc[1]{B'}$, where $\shiftFunc[1]{B'}\in\subcat^{[-k,0]}$. By the induction hypothesis, $\shiftFunc[1]{B'}$ has an $\subcat$-coresolution of length $k$ in $(\subcat^{[-k,0]},\extFunc_{[-k,0]},\additiveReal_{[-k,0]})$. Since $\subcat^{[-k,0]}\subseteq\subcat^{[-(k+1),0]}$, all $\additiveReal_{[-k,0]}$-conflations are $\additiveReal_{[-(k+1),0]}$-conflations by (a), so by appending $C'\to D'\to \shiftFunc[1]{B'}$ we obtain an $\subcat$-coresolution of $C'$ of length $k+1$ in $(\subcat^{[-(k+1),0]},\extFunc_{[-(k+1),0]},\additiveReal_{[-(k+1),0]})$. 

            Now, fix $m\in\mathbb{Z}_{>0}$. Since $\subcat\subseteq\text{Inj}_{\extFunc_{[-m,0]}}(\subcat^{[-m,0]})$ and $\shiftFunc[-m]{\subcat}\subseteq\text{Proj}_{\extFunc_{[-m,0]}}(\subcat^{[-m,0]})$ by (a), it follows that $\subcat^{[-m,0]}$ has both enough $\extFunc_{[-m,0]}$-projectives and $\extFunc_{[-m,0]}$-injectives. The positive extensions in the statement are then given by Proposition-Definition~\ref{prp-dfn: positive}. For the positive global dimension, consider $A,C\in\subcat^{[-m,0]}$. Note that there exist $\additiveReal_{[-m,0]}$-conflations
            \begin{align*}
                A\to &I_1\to \cosyz[1]{A}, \\
                \cosyz[1]{A}\to &I_2\to \cosyz[2]{A}, \\
                                   &\vdots \\
                \cosyz[m]{A}\to &I_m\to 0,
            \end{align*}
            with $I_i\in\text{Inj}_{\extFunc_{[-m,0]}}(\subcat^{[-m,0]})$ for $1\le i\le m$, which implies that $(\extFunc_{[-m,0]})^{m+1}(C,A)\simeq\extFunc_{[-m,0]}(C,\cosyz[m]{A})$ vanishes by the last $\additiveReal_{[-m,0]}$-conflation. Thus, $(\extFunc_{[-m,0]})^{m+1}=0$, by which $(\subcat^{[-m,0]},\extFunc_{[-m,0]},\additiveReal_{[-m,0]})$ has positive global dimension at most $m$.

        \item Let $m\in\mathbb{Z}_{>0}$ and $A,C\in\subcat^{[-m,0]}$.

            We proceed by induction. By (b), we may consider an $\additiveReal_{[-m,0]}$-conflation $A\to S_1\to \cosyz[1]{A}$ with $S_1\in\subcat$, as well as its corresponding distinguished triangle $A\to S_1\to \cosyz[1]{A}\to \shiftFunc[1]{A}$, which we can rotate into $S_1\to \cosyz[1]{A}\to \shiftFunc[1]{A}\to \shiftFunc[1]{S_1}$. By applying $\text{Hom}_{\triangulatedCat}(C,-)$ we obtain, for each $n>0$, an exact sequence of the form
            \[
                \text{Ext}_\triangulatedCat^n(C,S_1)\to \text{Ext}_\triangulatedCat^n(C,\cosyz[1]{A})\to \text{Ext}_\triangulatedCat^n(C,\shiftFunc[1]{A})\to \text{Ext}_\triangulatedCat^{n+1}(C,S_1),
            \] 
            whose leftmost and rightmost terms vanish by Lemma~\ref{lmm: construction}(a). Thus, we have that $\text{Ext}_{\triangulatedCat}^{n}(C,\cosyz[1]{A})\simeq\text{Ext}_{\triangulatedCat}^{n}(C,\shiftFunc[1]{A})$ for each $n>0$. By (b), it then follows that
            \begin{align*}
                (\extFunc_{[-m,0]})^2(C,A) &\simeq \extFunc_{[-m,0]}(C,\cosyz[1]{A}) \\
                                    &= \text{Ext}_{\triangulatedCat}^{1}(C,\cosyz[1]{A}) \\
                                    &\simeq \text{Ext}_{\triangulatedCat}^{1}(C,\shiftFunc[1]{A}) \\
                                    &\simeq \text{Ext}_{\triangulatedCat}^{2}(C,A).
            \end{align*}
            Now, suppose that $(\extFunc_{[-m,0]})^k(Z,X)\simeq\text{Ext}_{\triangulatedCat}^{k}(Z,X)$ for all $X,Z\in\subcat^{[-m,0]}$ and some $2\le k<m$. Then, since $\cosyz[k]{A}=\cosyz[1]{\cosyz[k-1]{A}}$, it follows from the same arguments that
            \begin{align*}
                (\extFunc_{[-m,0]})^{k+1}(C,A) &\simeq \extFunc_{[-m,0]}(C,\cosyz[k]{A}) \\
                                        &\simeq (\extFunc_{[-m,0]})^k(C,\cosyz[1]{A}) \\
                &\simeq \text{Ext}_{\triangulatedCat}^{k}(C,\cosyz[1]{A}) \\
                &\simeq \text{Ext}_{\triangulatedCat}^{k}(C,\shiftFunc[1]{A}) \\
                &\simeq \text{Ext}_{\triangulatedCat}^{k+1}(C,A).
            \end{align*}

        \item Let $m\in\mathbb{Z}_{>0}$ and $I\in\text{Inj}_{\extFunc_{[-m,0]}}(\subcat^{[-m,0]})$. By (b), there exists an $\additiveReal_{[-m,0]}$-conflation $I\to S_1\to L_1$ with $S\in\subcat$, which splits since $I$ is $\extFunc_{[-m,0]}$-injective, by which $S_1\simeq I\bigoplus L_1$ in $\subcat^{[-m,0]}$. By our assumption that $\subcat^{[-m,0]}$ is an additive subcategory of $\triangulatedCat$ and the hypothesis that $\subcat$ is closed under summands in $\triangulatedCat$, it follows that $I\in\subcat$. Thus, $\text{Inj}_{\extFunc_{[-m,0]}}(\subcat^{[-m,0]})\subseteq\subcat$, and furthermore $\text{Inj}_{\extFunc_{[-m,0]}}(\subcat^{[-m,0]})=\subcat$ by (a). The equality $\text{Proj}_{\extFunc_{[-m,0]}}(\subcat^{[-m,0]})=\shiftFunc[-m]{\subcat}$ is proved analogously.

            On the other hand, the last assertion follows from our hypothesis and (c).

        \item Let $C\in\subcat^{[-m,0]}$. By (b), there exists an $\additiveReal_{[-m,0]}$-conflation $C\to Q_0\to L_1$ with $Q_0\in\text{Prod}(Q)$. Hence, $Q^J\simeq Q_0\bigoplus Q_1$ in $\subcat^{[-m,0]}$ for some set $J$ and $Q_1\in\text{Prod}(Q)$, which gives the split $\additiveReal_{[-m,0]}$-conflation $Q_0\to Q^J\to Q_1$. By the extriangulated octahedral axiom, we can then compose the $\additiveReal_{[-m,0]}$-inflations $C\to Q_0$ and $Q_0\to Q^J$ in order to obtain an $\additiveReal_{[-m,0]}$-inflation $C\to Q^J$.
    \end{enumerate}
\end{proof}

As a consequence of these results, together with Proposition-Definition~\ref{prp-dfn: positive}(1) and Remark~\ref{rmk: products}(1), we obtain the following canonical construction.

\begin{Cor}\label{cor: construction}
    Let $\triangulatedTriple$ be a triangulated category with products, $Q\in\triangulatedCat$ be such that $\text{Prod}(Q)$ is a presilting subcategory of $\triangulatedCat$, and $m\in\mathbb{Z}_{>0}$. Then
    \[
        \big(\text{Prod}(Q)^{[-m,0]},\extFunc_{[-m,0]},\additiveReal_{[-m,0]}\big)
    \] 
    is an extriangulated category with
    \begin{itemize}
        \item $\extFunc_{[-m,0]}(C,A):=\text{Ext}_{\triangulatedCat}^{1}(C,A)$ for all $A,C\in\text{Prod}(Q)^{[-m,0]}$;
        \item $A\to B\to C$ an $\additiveReal_{[-m,0]}$-conflation whenever $A\to B\to C\to \shiftFunc[1]{A}$ is a distinguished triangle in $\triangulatedCat$ with $A,B,C\in\text{Prod}(Q)^{[-m,0]}$;
        \item $\text{Inj}_{\extFunc_{[-m,0]}}\big(\text{Prod}(Q)^{[-m,0]}\big)=\text{Prod}(Q)$ and $\text{Proj}_{\extFunc_{[-m,0]}}\big(\text{Prod}(Q)^{[-m,0]}\big)=\shiftFunc[-m]{\text{Prod}(Q)}$;
        \item exact products and an $\extFunc_{[-m,0]}$-injective cogenerator $Q$ \textemdash \ in particular, enough $\extFunc_{[-m,0]}$-injectives and positive extensions
            \[
                \big(\extFunc_{[-m,0]}\big)^n(C,A)\simeq\text{Ext}_{\triangulatedCat}^{n}(C,A)
            \] 
            for all $A,C\in\text{Prod}(Q)^{[-m,0]}$ and $n>1$;
        \item finite positive global dimension at most $m$ \textemdash \ hence, all objects of finite $\extFunc_{[-m,0]}$-injective dimension;
        \item enough $\extFunc_{[-m,0]}$-projectives \textemdash \ in particular, for each $n>0$, $X\in\text{Prod}(Q)^{[-m,0]}$, and family $\{C_j\}_{j\in J}$ of objects in $\text{Prod}(Q)^{[-m,0]}$, there exists a monomorphism
            \[
                \big(\extFunc_{[-m,0]}\big)^n\bigg(X,\prod_{j\in J}C_j\bigg)\hookrightarrow \prod_{j\in J}\big(\extFunc_{[-m,0]}\big)^n(X,C_j),
            \]
            which implies that $\text{Prod}(X^{\perp_{>0}})=X^{\perp_{>0}}$.
    \end{itemize}
\end{Cor}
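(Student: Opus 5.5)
Most items follow by applying Proposition~\ref{prp: construction} to $\subcat:=\text{Prod}(Q)$, which is presilting in $\triangulatedCat$ and closed under summands. Part (a) gives the extriangulated structure with $\extFunc_{[-m,0]}$ and the conflations described. Part (d) identifies the $\extFunc_{[-m,0]}$-injectives with $\text{Prod}(Q)$ and the $\extFunc_{[-m,0]}$-projectives with $\shiftFunc[-m]{\text{Prod}(Q)}$. Part (e), applied with $\text{Prod}(Q)=\subcat$, shows that $Q$ is an $\extFunc_{[-m,0]}$-injective cogenerator, hence gives enough injectives. Part (b) supplies enough projectives, positive extensions, and positive global dimension at most $m$. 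Part (c) identifies $(\extFunc_{[-m,0]})^n$ with $\text{Ext}^n_\triangulatedCat$. Once global dimension is at most $m$, every object has finite $\extFunc_{[-m,0]}$-injective dimension: the dual of Lemma~\ref{lmm: n-(co)tilting} gives $\text{Inj}^\lor_m=\text{Prod}(Q)^{[-m,0]}$, and part (b) directly exhibits an $\subcat$-coresolution of length $m$.

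The main point is exact products. First I would show that $\text{Prod}(Q)^{[-m,0]}$ is closed under products in $\triangulatedCat$. Since $\triangulatedCat$ has exact products (Remark~\ref{rmk: triangulated}), a product of distinguished triangles is distinguished, and products commute with shifts. By induction on $m$, write each $C_j$ via a triangle $B_j\to C_j\to D_j\to\shiftFunc[1]{B_j}$ with $D_j\in\text{Prod}(Q)$ and $B_j\in\shiftFunc[-1]{\text{Prod}(Q)^{[-(m-1),0]}}$. The product triangle then places $\prod C_j$ in $\shiftFunc[-1]{\text{Prod}(Q)^{[-(m-1),0]}}\ast\text{Prod}(Q)=\text{Prod}(Q)^{[-m,0]}$, using that $\text{Prod}(Q)$ is product-closed.

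Products in $\triangulatedCat$ of objects of the full subcategory are then products in the subcategory. Moreover, a product of $\additiveReal_{[-m,0]}$-conflations is a product of triangles all of whose terms lie in $\text{Prod}(Q)^{[-m,0]}$, so it is again an $\additiveReal_{[-m,0]}$-conflation. The map $\Gamma^\ast$ is inherited from $\triangulatedCat$ because $\extFunc_{[-m,0]}=\text{Ext}^1_\triangulatedCat$ on this subcategory. I expect this step to need the most care, namely checking that the natural transformation in Definition~\ref{dfn: exact products} restricts correctly.

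Finally, the monomorphism follows from Example~\ref{exp: mono}. With enough projectives, $(\extFunc_{[-m,0]})^n(X,-)\simeq\extFunc_{[-m,0]}(\syz[n-1]{X},-)$ by Proposition-Definition~\ref{prp-dfn: positive}(1), and Remark~\ref{rmk: products}(1) applied to $\syz[n-1]{X}$ gives the injection into $\prod_j$. The vanishing $\extFunc^n(X,\prod C_j)=0$ then follows whenever each $\extFunc^n(X,C_j)=0$. Since $X^{\perp_{>0}}$ is closed under summands by additivity, we get $\text{Prod}(X^{\perp_{>0}})=X^{\perp_{>0}}$.
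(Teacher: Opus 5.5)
Your proposal is correct and follows the paper's route: the corollary is obtained by applying Proposition~\ref{prp: construction} to $\text{Prod}(Q)$, with the monomorphism coming from Proposition-Definition~\ref{prp-dfn: positive}(1) and Remark~\ref{rmk: products}(1). Your inductive argument that $\text{Prod}(Q)^{[-m,0]}$ is closed under products in $\triangulatedCat$, so that exact products restrict to the subcategory, supplies a step the paper leaves implicit.
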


\begin{Rmk}\label{rmk: coproducers}\leavevmode
    \begin{enumerate}[label=\RmkLbl]
    
        \item One can similarly verify an analogous construction to Corollary~\ref{cor: construction} for coproducts: if $\triangulatedTriple$ is a triangulated category with coproducts, $A\in\triangulatedCat$ is such that $\text{Add}(A)$ is a presilting subcategory of $\triangulatedCat$, and $m\in\mathbb{Z}_{>0}$, then
            \[
                \big(\text{Add}(A)^{[0,m]},\extFunc_{[0,m]},\additiveReal_{[0,m]}\big)
            \] 
            has an extriangulated structure induced by restriction of $\triangulatedCat$ with
            \begin{itemize}
                \item exact coproducts (i.e., (AET3.5) in \cite[Definition~3.2]{argudín2025recollements});
                \item both enough $\extFunc_{[0,m]}$-projectives and $\extFunc_{[0,m]}$-injectives, given by $\text{Add}(A)$ and $\shiftFunc[m]{\text{Add}(A)}$, respectively, as well as an $\extFunc_{[0,m]}$-projective generator $A$;
                \item finite positive global dimension at most $m$.
            \end{itemize}
            Moreover, for each $n>0$, $X\in\text{Add}(A)^{[0,m]}$, and family $\{C_j\}_{j\in J}$ of objects in $\text{Add}(A)^{[0,m]}$, there exists a monomorphism
            \[
                \big(\extFunc_{[0,m]}\big)^n\bigg(\coprod_{j\in J}C_j, X\bigg) \hookrightarrow \prod_{j\in J}\big(\extFunc_{[0,m]}\big)^n(C_j,X),
            \] 
            which implies that $\text{Add}({}^{\perp_{>0}}X)={}^{\perp_{>0}}X$.

        \item In \cite[]{gorsky2023hereditary}, 0-Auslander extriangulated categories were introduced and studied, and work in progress was reported for $d$-\emph{Auslander extriangulated categories} with $d\in\mathbb{Z}_{\ge0}$, a generalization consisting of an extriangulated category $\extriangulatedTriple$ with 
            \begin{itemize}
            
                \item both enough $\extFunc$-projectives and $\extFunc$-injectives;

                \item $\extFunc$-projective and $\extFunc$-injective dimension $\le d+1$;

                \item dominant and codominant dimension $\ge d+1$ (see \cite[Definition~3.5]{gorsky2023hereditary}).
            \end{itemize}
            We note that, for any presilting subcategory $\subcat$ of a triangulated category $\triangulatedCat$ and $m\in\mathbb{Z}_{>0}$, the extriangulated category $(\subcat^{[-m,0]},\extFunc_{[-m,0]},\additiveReal_{[-m,0]})$ from Proposition~\ref{prp: construction} is $(m-1)$-Auslander. In fact, the first two properties above follow directly from the result, whereas the last one follows from the equalities $\text{Proj}_{\extFunc_m}(\extriangulatedCat_m)=\shiftFunc[-m]{\subcat}$ and $\text{Inj}_{\extFunc_m}(\extriangulatedCat_m)=\subcat$ since, for any $P\in\shiftFunc[-m]{\subcat}$, the trivial distinguished triangle given by its identity morphism induces $\additiveReal_{[-m,0]}$-conflations $P\to 0\to \shiftFunc[1]{P}, \shiftFunc[1]{P}\to 0\to \shiftFunc[2]{P},\dots,\shiftFunc[m-1]{P}\to 0\to \shiftFunc[m]{P}$ via rotation, and dually for $\extFunc_{[-m,0]}$-injectives. Analogously, the category $\subcat^{[0,m]}$ is also $(m-1)$-Auslander.
    \end{enumerate}
\end{Rmk}

Corollary~\ref{cor: construction} applies to any triangulated category with products and a cosilting object. 

\begin{Dfn}\label{dfn: cosilting}\cite[Definition~1.3.1]{beilinson1982fascieaux}\cite[Definition~4.1]{psaroudakis2018realisation}\cite[Definition~3.2]{hügel2025mutation}
    Let $\triangulatedTriple$ be a triangulated category with products. 
    \begin{enumerate}[label=\DfnLbl]
        \item A $t$-\emph{structure} in $\triangulatedCat$ is a pair $\torsionPair$ of subcategories of $\extriangulatedCat$ such that $\torsionClass={}^{\perp_0}\torsionfreeClass$, $\torsionClass^{\perp_0}=\torsionfreeClass$, $\shiftFunc[1]{\torsionClass}\subseteq\torsionClass$ and, for each $C\in\triangulatedCat$, there exists a distinguished triangle $T\to C\to F\to \shiftFunc[1]{T}$ with $T\in\torsionClass$ and $F\in\torsionfreeClass$. Its \emph{heart} is then $\shiftFunc[-1]{\torsionClass}\cap\torsionfreeClass$.

        \item An object $\omega\in\triangulatedCat$ is \emph{cosilting} if $({}^{\perp_{\le0}}\omega,{}^{\perp_{>0}}\omega)$ is a $t$-structure in $\triangulatedCat$. We say that two cosilting objects $\omega$ and $\omega'$ in $\triangulatedCat$ are \emph{equivalent} if $\text{Prod}(\omega)=\text{Prod}(\omega')$.

        \item Let $\omega,\omega'\in\triangulatedCat$ be cosilting objects and $\mathcal{D}=\text{Prod}(\omega)\cap\text{Prod}(\omega')$. We say that
            \begin{itemize}
                \item[(i)] $\omega'$ is a \emph{left mutation} of $\omega$ if there exists a distinguished triangle
                    \[
                        \omega\xrightarrow[]{f}B_0\to B_1\to \shiftFunc[1]{\omega}
                    \] 
                    such that $f$ is a left $\mathcal{D}$-approximation of $\omega$ and $\text{Prod}(B_0\bigoplus B_1)=\text{Prod}(\omega')$.

                \item[(ii)] $\omega'$ is a \emph{right mutation} of $\omega$ if there exists a distinguished triangle
                    \[
                        \shiftFunc[-1]{\omega}\to A_1\to A_0\xrightarrow[]{g}\omega
                    \] 
                    such that $g$ is a right $\mathcal{D}$-approximation of $\omega$ and $\text{Prod}(A_1\bigoplus A_0)=\text{Prod}(\omega')$.
            \end{itemize}
    \end{enumerate}
\end{Dfn}

\begin{Exp}\label{exp: cosilting}
    Let $\triangulatedTriple$ be a triangulated category with products and a cosilting object $\omega$, and let $m\in\mathbb{Z}_{>0}$. Clearly, $\text{Prod}(\omega)$ is a presilting subcategory of $\triangulatedCat$. By Corollary~\ref{cor: construction}, $(\text{Prod}(\omega)^{[-m,0]},\extFunc_{[-m,0]},\additiveReal_{[-m,0]})$ is then an extriangulated category with exact products, an $\extFunc_{[-m,0]}$-injective cogenerator $\omega$, finite positive global dimension at most $m$\textemdash hence, enough $\extFunc_{[-m,0]}$-injectives and all objects with finite $\extFunc_{[-m,0]}$-injective dimension\textemdash and moreover enough $\extFunc_{[-m,0]}$-projectives. Note that $\text{Prod}(\omega)$ is a silting subcategory of $\text{Prod}(\omega)^{[-m,0]}$.
\end{Exp}

\begin{Rmk}\label{rmk: AIR}
    Similar results to Lemma~\ref{lmm: construction}(b) and Proposition~\ref{prp: construction}(c) were obtained independently in \cite[]{wei2026air}, where the authors focused on the so called $d$-\emph{extended heart} $\shiftFunc[d-1]{\mathcal{H}}\ast\dots\ast\shiftFunc[1]{\mathcal{H}}\ast\mathcal{H}$ of a $t$-structure generated by a particular type of contravariantly finite presilting subcategory of an arbitrary triangulated category.
\end{Rmk}

\subsection{$n$-Cosilting complexes}\label{ssec: n-cosilting}
This section focuses on the basic fact that $Q$ is a cosilting object in $\derivedCat(\AUR)$, which will lead us to the mutation of $n$-cosilting complexes over $R$ via large silting mutation.

\begin{Rmk}\label{rmk: Mod(R)}
    By \cite[Example~2.13]{nakaoka2019extriangulated}, $\text{Mod}(\AUR)$ is an extriangulated category with additive bifunctor $\text{Ext}_{\AUR}^{1}$ whose conflations are given by short exact sequences; in particular, deflations are given by epimorphisms, with their kernels being the respective cocones. Moreover, it has both enough ($\text{Ext}_{\AUR}^{1}$-)projectives and ($\text{Ext}_{\AUR}^{1}$-)injectives \textemdash \ hence, positive extensions, given by $\text{Ext}_{\AUR}^{n}(\ast,-)=\text{Ext}_\AUR^1(\ast,\cosyz[k-1]{-})\simeq\text{Ext}_\AUR^1(\syz[k-1]{\ast},-)$ for each $n>1$ \textemdash, as well as exact products and an ($\text{Ext}_{\AUR}^{1}$-)injective cogenerator $Q$. In the following, we consider $\text{Mod}(\AUR)$ with this extriangulated structure.
\end{Rmk}

Recall that $\text{Inj}(\AUR)$ is a presilting subcategory of $\text{Mod}(\AUR)$. By identifying $\text{Mod}(\AUR)$ with the complexes in $\derivedCat(\AUR)$ concentrated in degree zero, it follows that $\text{Inj}(\AUR)$ is a presilting subcategory of $\derivedCat(\AUR)$. Moreover, since $\text{Prod}(Q)=\text{Inj}(\AUR)$, we have by Corollary~\ref{cor: construction} that $(\text{Inj}(\AUR)^{[-n,0]},\extFunc_{[-n,0]},\additiveReal_{[-n,0]})$ is an extriangulated category which satisfies all of the hypotheses considered throughout Section~\ref{sec: large}. Note that $\text{Inj}(\AUR)^{[-n,0]}$ is equivalent to $\Kzn{\AUR}$; we denote the corresponding extriangulated structure by $(\Kzn{\AUR},\extFunc,\additiveReal)$. In particular, by Proposition~\ref{prp: bijections}, we know that product-closed silting subcategories of $\Kzn{\AUR}$ are in bijection with equivalence classes of their producers. We shall now see that, in this context, such producers coincide precisely with $n$-cosilting complexes over $\AUR$.

\begin{Prp-Dfn}\label{prp-dfn: cosilting}\cite[Definition~2.4,~Theorem~2.5~\&~Proposition~2.8]{zhang2017cosilting}
    A complex $\omega\in\derivedCat(\AUR)$ is \emph{cosilting} if it satisfies the following conditions.
    \begin{enumerate}
        \item[(i)] $\text{Hom}_{\derivedCat(\AUR)}(\text{Prod}(\omega),\shiftFunc[i]{\text{Prod}(\omega)})=0$ for all $i>0$.
        \item[(ii)] The smallest triangulated subcategory of $\derivedCat(\AUR)$ containing $\text{Prod}(\omega)$ is $\KbI{\AUR}$.
    \end{enumerate}
    We may also replace (ii) with
    \begin{enumerate}
        \item[(ii')] $\omega\in \text{Prod}(Q)^\lor$ and $Q\in\text{Prod}(\omega)^\land$.
    \end{enumerate}
    If, furthermore, $\omega\in\text{Prod}(Q)^\lor_n$ (equivalently, $Q\in\text{Prod}(\omega)^\land_n$), it is an $n$-\emph{cosilting complex}. We say that two cosilting complexes $\omega$ and $\omega'$ are \emph{equivalent} if $\text{Prod}(\omega)=\text{Prod}(\omega')$.
\end{Prp-Dfn}

\begin{Rmk}\label{rmk: cosilting}\leavevmode
    \begin{enumerate}[label=\RmkLbl]
        \item Up to shifts, cosilting complexes may be considered to be concentrated in degrees greater than or equal to zero. We adopt this convention in the following. 
        \item Condition (ii) in Proposition-Definition~\ref{prp-dfn: cosilting} implies that $\omega\in\KbI{\AUR}$, which is also equivalent to the first inclusion in condition (ii').
        \item Analogously, following (1), the inclusion $\omega\in\text{Prod}(Q)^\lor_n$ is equivalent to $\omega\in\Kzn{\AUR}$. For this reason, $n$-cosilting complexes are also referred to as $(n+1)$-\emph{term cosilting complexes} in the literature.

        \item Since, for all $A,C\in\KbI{\AUR}$, we have that
            \[
            \text{Hom}_{\KbI{\AUR}}(C,A)\simeq \text{Hom}_{\derivedCat(\AUR)}(C,A),
            \] 
            we may regard $\KbI{\AUR}$ as a subcategory of $\derivedCat(\AUR)$.
        \item By the dual of \cite[Proposition~4.2]{hügel2015silting}, all cosilting complexes are cosilting objects in $\derivedCat(\AUR)$.
    \end{enumerate}
\end{Rmk}

The viewpoint we wish to adopt is that a(n $n$-)cosilting complex is an object whose closure under products and summands is a silting subcategory of an appropriate extriangulated category. Before proceeding to the $n$-cosilting case, we show the general case. 

\begin{Prp}\label{prp: cosilting complex}
    Let $\omega\in\derivedCat(\AUR)$. Then $\omega$ is a cosilting complex if, and only if, $\text{Prod}(\omega)$ is a silting subcategory of $\KbI{\AUR}$.
\end{Prp}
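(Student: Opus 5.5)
The plan is to compare the two notions through the extriangulated structure that $\KbI{\AUR}$ inherits as an extension-closed subcategory of the triangulated category $\derivedCat(\AUR)$, in the sense of Remark~\ref{rmk: cosilting}(4). In fact $\KbI{\AUR}$ is itself a triangulated subcategory, closed under shifts, so it is triangulated. By Remark~\ref{rmk: triangulated} its higher extensions are $\text{Hom}_{\derivedCat(\AUR)}(-,\shiftFunc[n]{-})$. Hence a subcategory $\subcat\subseteq\KbI{\AUR}$ satisfies $\subcat\subseteq{}^{\perp_{>0}}\subcat$ exactly when condition (i) of Proposition-Definition~\ref{prp-dfn: cosilting} holds for it. By Remark~\ref{rmk: subcategories}(2), $\text{thick}(\subcat)$ computed in $\KbI{\AUR}$ is the smallest triangulated subcategory of $\KbI{\AUR}$ containing $\subcat$ and closed under summands.

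For the forward implication, let $\omega$ be cosilting. By Remark~\ref{rmk: cosilting}(2), $\omega\in\KbI{\AUR}$, and by (ii) the smallest triangulated subcategory containing $\text{Prod}(\omega)$ is all of $\KbI{\AUR}$. In particular $\text{Prod}(\omega)\subseteq\KbI{\AUR}$, and the closure under products and summands is taken in $\derivedCat(\AUR)$. This is harmless because $\text{Prod}(\omega)$ already lies in $\KbI{\AUR}$. The subcategory $\text{Prod}(\omega)$ is closed under summands by definition, and presilting by (i). Moreover $\text{thick}(\text{Prod}(\omega))$ contains the smallest triangulated subcategory generated by $\text{Prod}(\omega)$, which is $\KbI{\AUR}$. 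So $\text{Prod}(\omega)$ is silting in $\KbI{\AUR}$.

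For the converse, suppose $\text{Prod}(\omega)$ is a silting subcategory of $\KbI{\AUR}$. Then (i) holds by the presilting property. It remains to show that the smallest triangulated subcategory $\mathcal{U}$ containing $\text{Prod}(\omega)$ equals $\KbI{\AUR}$, whereas a priori we only know $\text{thick}(\text{Prod}(\omega))=\KbI{\AUR}$. I expect this passage from thick closure to triangulated closure to be the main obstacle. Rather than a Thomason-type argument, I would use the silting theory already available. By Theorem~\ref{thm: AT}, $\KbI{\AUR}=\text{Prod}(\omega)^\land$ (the pair being bounded), and $\text{Prod}(\omega)^\land\subseteq\mathcal{U}$. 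Indeed, in the triangulated setting every conflation is a triangle, so iterated cones of objects of $\text{Prod}(\omega)$ stay in $\mathcal{U}$. Hence $\mathcal{U}=\KbI{\AUR}$, which is (ii). Alternatively, I could verify (ii') directly from $\KbI{\AUR}=\text{Prod}(\omega)^\land$ applied to $Q$, together with Remark~\ref{rmk: cosilting}(2) for $\omega\in\text{Prod}(Q)^\lor$.

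The step I will check most carefully is whether boundedness really holds in $\KbI{\AUR}$. Theorem~\ref{thm: AT} asserts it for silting subcategories in any extriangulated category with positive extensions, and $\KbI{\AUR}$ has these by Remark~\ref{rmk: triangulated}. So the plan relies only on that theorem.
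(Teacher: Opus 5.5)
Your forward implication is fine, but the converse rests on a false equality. Boundedness of $(\subcat^\lor,\subcat^\land)$ in Theorem~\ref{thm: AT} means, by Definition~\ref{dfn: cotorsion}(d) with $\cotorsionClass=\subcat^\lor$ and $\cotorsionfreeClass=\subcat^\land$, that $(\subcat^\lor)^\land=\extriangulatedCat=(\subcat^\land)^\lor$. It does \emph{not} say $\subcat^\land=\extriangulatedCat$.

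In a triangulated category $\subcat^\land=\extriangulatedCat$ is impossible unless $\subcat=0$. It would force $\subcat^\lor={}^{\perp_1}\extriangulatedCat=\text{Proj}_\extFunc(\extriangulatedCat)$. This class is $0$, because an $\extFunc$-projective $P$ satisfies $\text{Hom}(P,P)\simeq\text{Ext}^1(P,\shiftFunc[-1]{P})=0$. Then $\subcat=\subcat^\lor\cap\subcat^\land=0$.

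Concretely, take the silting subcategory $\text{Prod}(Q)$ of $\KbI{\AUR}$. Every object of $\text{Prod}(Q)^\land$ is isomorphic to a complex of injectives concentrated in degrees $\le 0$. So $\shiftFunc[-1]{Q}$, which has nonzero cohomology in degree $1$, does not lie in it. Your fallback via (ii') inherits the same error.

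The repair is short and keeps your idea. Theorem~\ref{thm: AT} gives $\KbI{\AUR}=(\text{Prod}(\omega)^\lor)^\land$. Conflations in $\KbI{\AUR}$ are triangles in $\derivedCat(\AUR)$, and $\mathcal{U}$ is closed under cones and cocones. So by induction $\text{Prod}(\omega)^\lor\subseteq\mathcal{U}$, and then $(\text{Prod}(\omega)^\lor)^\land\subseteq\mathcal{U}$. Since trivially $\mathcal{U}\subseteq\KbI{\AUR}$, (ii) follows.

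Corrected this way, your argument is valid but takes a different route from the paper's. The paper shows directly that $\text{tria}(\text{Prod}(\omega))=\text{thick}(\text{Prod}(\omega))$ in $\derivedCat(\AUR)$ by an Eilenberg--Mazur swindle. Products of triangles are triangles, so countable powers of objects in the triangulated closure of a product-closed class stay in that closure, which forces closure under summands. Both implications then follow from Remark~\ref{rmk: cosilting}(2),(4).

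Your route replaces the swindle by the boundedness part of Adachi--Tsukamoto's theorem, i.e.\ $\text{thick}(\subcat)=(\subcat^\lor)^\land$ for silting $\subcat$. This needs no product-closure, so it shows $\text{tria}=\text{thick}$ for any silting subcategory of a triangulated category. The cost is relying on a deeper cited result instead of an elementary trick.
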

\begin{proof}
    Let $\text{tria}(\text{Prod}(\omega))$ denote the smallest triangulated subcategory of $\derivedCat(\AUR)$ which contains $\text{Prod}(\omega)$. Since $\derivedCat(\AUR)$ has products, by the Eilenberg-Mazur swindle it follows that $\text{tria}(\text{Prod}(\omega))$ is closed under summands in $\derivedCat(\AUR)$ (c.f. \cite[Lemma~3.6]{hügel2019silting} for the dual case). By Remark~\ref{rmk: subcategories}(2), together with the minimality properties of both $\text{tria}(\text{Prod}(\omega))$ and $\text{thick}(\text{Prod}(\omega))$ with respect to subcategories of $\derivedCat(\AUR)$, we have that $\text{tria}(\text{Prod}(\omega))=\text{thick}(\text{Prod}(\omega))$. The assertion then follows from Remark~\ref{rmk: cosilting}(2)~\&~(4).
\end{proof}

\begin{Lmm}\label{lmm: n-cosilting}
    Let $\omega\in\Kzn{\AUR}$. Then $\text{Prod}(\omega)$ is a presilting subcategory of $\KbI{\AUR}$ if, and only if, it is a presilting subcategory of $\Kzn{\AUR}$.
\end{Lmm}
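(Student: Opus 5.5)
The plan is to reduce the statement to Proposition~\ref{prp: construction}(d), applied to the triangulated category $\derivedCat(\AUR)$ and its presilting subcategory $\subcat=\text{Inj}(\AUR)$ with $m=n$. That result says that, for $\subcat'\subseteq\subcat^{[-n,0]}$, being presilting in the extriangulated category $(\subcat^{[-n,0]},\extFunc_{[-n,0]},\additiveReal_{[-n,0]})$ is equivalent to being presilting in $\derivedCat(\AUR)$. Via the identification $\text{Inj}(\AUR)^{[-n,0]}\simeq\Kzn{\AUR}$ recalled before Proposition-Definition~\ref{prp-dfn: cosilting}, the right-hand side of the lemma is the first condition. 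Remark~\ref{rmk: cosilting}(4) says that $\text{Hom}$-sets in $\KbI{\AUR}$ agree with those in $\derivedCat(\AUR)$, and $\KbI{\AUR}$ is a triangulated subcategory closed under shifts. Hence presilting in $\KbI{\AUR}$ is the same as presilting in $\derivedCat(\AUR)$, for subcategories contained in $\KbI{\AUR}$.

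The real content is therefore that the symbol $\text{Prod}(\omega)$ means the same subcategory in all three ambients $\Kzn{\AUR}$, $\KbI{\AUR}$ and $\derivedCat(\AUR)$, and that ``closed under summands'' means the same in each. Summands are not an issue: in every case they are direct summands in an additive full subcategory of $\derivedCat(\AUR)$. So I will show the following.
\begin{itemize}
\item[(i)] The product in $\derivedCat(\AUR)$ of a family in $\Kzn{\AUR}$ lies again in $\Kzn{\AUR}$, and hence is also the product in $\KbI{\AUR}$ and in $\Kzn{\AUR}$. This holds because products of complexes of injectives are computed degreewise: bounded complexes of injectives are K-injective, so the degreewise product represents the product in $\derivedCat(\AUR)$, and products of injectives are injective.
\item[(ii)] A direct summand in $\derivedCat(\AUR)$ of an object of $\Kzn{\AUR}$ lies in $\Kzn{\AUR}$.
\end{itemize}

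For (ii), I use the intrinsic description: $X\in\derivedCat(\AUR)$ is isomorphic to an object of $\Kzn{\AUR}$ if and only if $H^i(X)=0$ for $i<0$ and $\text{Hom}_{\derivedCat(\AUR)}(M,\shiftFunc[i]{X})=0$ for all modules $M$ and all $i>n$. Both conditions pass to summands. Alternatively, (ii) can be obtained from Proposition~\ref{prp: construction}(b): a summand of $Y\in\text{Inj}(\AUR)^\lor_n$ again has a finite injective coresolution of length at most $n$, by dimension shifting (Lemma~\ref{lmm: shifting}) in $\text{Mod}(\AUR)$-like fashion.

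With (i) and (ii) in hand, $\text{Prod}(\omega)$ is the same class of objects whether it is formed in $\Kzn{\AUR}$, $\KbI{\AUR}$ or $\derivedCat(\AUR)$. The self-orthogonality condition matches across ambients: by Proposition~\ref{prp: construction}(c), $(\extFunc_{[-n,0]})^k(C,A)\simeq\text{Ext}^k_{\derivedCat(\AUR)}(C,A)=\text{Hom}_{\derivedCat(\AUR)}(C,\shiftFunc[k]{A})$ for $k>0$, and these groups are the same as in $\KbI{\AUR}$. This is exactly the content of Proposition~\ref{prp: construction}(d), which gives the equivalence.

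I expect the main obstacle to be (ii), the closure of $\Kzn{\AUR}$ under summands taken in the larger category. If the intrinsic characterization above proves awkward to justify, I would instead argue directly on complexes. Given an idempotent $e$ of $Y\in\Kzn{\AUR}$ whose image is $X$, the complex $X$ is a bounded complex of injectives with cohomology in $[0,n]$. Its truncation at degree $n$ has injective $n$-th term because $\text{Ext}^{1}$ into it vanishes, and this vanishing is inherited from $Y$ by the $\text{Hom}$-criterion above.
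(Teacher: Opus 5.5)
Your proposal is correct and follows the paper's proof. The paper also argues that $\Kzn{\AUR}$ is closed under products and summands in $\KbI{\AUR}$, so that $\text{Prod}(\omega)$ is the same class in both ambients, and then concludes via Remark~\ref{rmk: cosilting}(4) and Proposition~\ref{prp: construction}(c), which is what underlies part (d). You go further by actually justifying those closure properties, which the paper only asserts: your cohomological characterization of objects isomorphic to complexes in $\Kzn{\AUR}$ handles summands correctly, so the sketchier fallback arguments in your last paragraphs are unnecessary.
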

\begin{proof}
    Since $\Kzn{\AUR}$ is closed under products and summands in $\KbI{\AUR}$, $\text{Prod}(\omega)$ contains the same objects whether we consider it as a subcategory of $\KbI{\AUR}$ or of $\Kzn{\AUR}$. The assertion then follows from Remark~\ref{rmk: cosilting}(4) and Proposition~\ref{prp: construction}(c).
\end{proof}

\begin{Prp}\label{prp: n-cosilting}
    Let $\omega\in\derivedCat(\AUR)$. Then $\omega$ is an $n$-cosilting complex if, and only if, $\text{Prod}(\omega)$ is a silting subcategory of $\Kzn{\AUR}$.
\end{Prp}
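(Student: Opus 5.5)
We prove the two implications separately, working inside the extriangulated category $(\Kzn{\AUR},\extFunc,\additiveReal)$. By Proposition~\ref{prp: construction}(b), and since $\text{Inj}(\AUR)^{[-n,0]}\simeq\Kzn{\AUR}$, this category has $\text{Prod}(Q)^\lor_n=\Kzn{\AUR}$, an $\extFunc$-injective cogenerator $Q$ with $\text{Inj}_\extFunc(\Kzn{\AUR})=\text{Prod}(Q)$, and positive global dimension at most $n$. Proposition~\ref{prp: construction}(c) shows that its higher extensions agree with $\text{Hom}_{\derivedCat(\AUR)}(-,\shiftFunc[i]{-})$.

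($\Rightarrow$) Let $\omega$ be $n$-cosilting. By Remark~\ref{rmk: cosilting}(3) we have $\omega\in\Kzn{\AUR}$, and this subcategory is closed under products and summands, so $\text{Prod}(\omega)\subseteq\Kzn{\AUR}$. Condition (i) of Proposition-Definition~\ref{prp-dfn: cosilting}, combined with Lemma~\ref{lmm: n-cosilting}, gives that $\text{Prod}(\omega)$ is presilting in $\Kzn{\AUR}$. For thickness we use $Q\in\text{Prod}(\omega)^\land_n$. Here we have to check that the resolution conflations, which are triangles in $\derivedCat(\AUR)$, lie in $\Kzn{\AUR}$. This holds because the intermediate terms $L_i$ are cocones in a triangle $L_{i+1}\to S_i\to L_i$, and we want them in $\Kzn{\AUR}$. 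The cleanest route is to avoid this issue entirely: by Proposition~\ref{prp: construction}(b) applied with $\subcat=\text{Prod}(\omega)$, or more simply by Theorem~\ref{thm: AT}, it is enough to produce a finite $\text{Prod}(\omega)$-resolution of $Q$ inside the extriangulated category. To get one, note that $\text{Prod}(\omega)^\land_n$ computed in $\derivedCat(\AUR)$ consists of iterated extensions lying in $\text{Prod}(\omega)\ast\shiftFunc[1]{\text{Prod}(\omega)}\ast\cdots$. We truncate using the fact that $\Kzn{\AUR}=\text{Prod}(Q)^{[-n,0]}$ is closed under extensions (Lemma~\ref{lmm: construction}(b)), and that the cone of a map between objects of $\Kzn{\AUR}$ with $\extFunc$-vanishing is again in the range.

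Having $Q\in\text{thick}(\text{Prod}(\omega))$, the argument of Lemma~\ref{lmm: producer} applies. Exact products give $\text{Prod}(Q)\subseteq\text{thick}$, and then $\Kzn{\AUR}=\text{Prod}(Q)^\lor\subseteq\text{thick}(\text{Prod}(\omega))$. Hence $\text{Prod}(\omega)$ is silting.

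($\Leftarrow$) Suppose $\text{Prod}(\omega)$ is silting in $\Kzn{\AUR}$. Then $\omega\in\Kzn{\AUR}$, so $\omega\in\text{Prod}(Q)^\lor_n$ by Remark~\ref{rmk: cosilting}(3). Lemma~\ref{lmm: n-cosilting} gives condition (i). By Theorem~\ref{thm: AT}, $\Kzn{\AUR}=\text{Prod}(\omega)^\lor\ast\cdots$; more precisely, $Q\in\text{Inj}_\extFunc\subseteq\text{Prod}(\omega)^\land$. Conflations in $\Kzn{\AUR}$ are triangles in $\derivedCat(\AUR)$, so $Q\in\text{Prod}(\omega)^\land$ in $\derivedCat(\AUR)$, and condition (ii') holds. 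Thus $\omega$ is cosilting. Finally, Lemma~\ref{lmm: dimension}, applicable since the global dimension is at most $n$, sharpens this to $Q\in\text{Prod}(\omega)^\land_n$. Alternatively, $\omega\in\text{Prod}(Q)^\lor_n$ already gives $n$-cosilting by the stated equivalence.

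The main obstacle is in ($\Rightarrow$): showing that $Q$ admits a $\text{Prod}(\omega)$-resolution whose conflations stay inside $\Kzn{\AUR}$. I would first try to take the resolution from (ii') and verify, by induction and dimension shifting (Lemma~\ref{lmm: shifting}), that each syzygy $L_i$ has cohomology only in degrees $[0,n]$, so that it is homotopy equivalent to a complex in $\Kzn{\AUR}$. If that fails, I would instead show directly that $\text{thick}_{\Kzn{\AUR}}(\text{Prod}(\omega))$ contains $Q$, using the silting property of $\text{Prod}(\omega)$ in $\KbI{\AUR}$ (Proposition~\ref{prp: cosilting complex}) together with the hereditary cotorsion pair of Theorem~\ref{thm: AT} restricted to $\Kzn{\AUR}$.
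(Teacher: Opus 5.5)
Your ($\Leftarrow$) direction is correct, and it takes a different route from the paper. You use Theorem~\ref{thm: AT} inside $\Kzn{\AUR}$ to get $Q\in\text{Inj}_{\extFunc}(\Kzn{\AUR})\subseteq\text{Prod}(\omega)^\land$. You then read the conflations as triangles in $\derivedCat(\AUR)$ to verify (ii'), and take the length bound from Lemma~\ref{lmm: dimension}. The paper instead goes through Proposition~\ref{prp: cosilting complex} and compares thick closures in $\Kzn{\AUR}$ and $\KbI{\AUR}$.

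The gap is in ($\Rightarrow$), exactly where you locate the main obstacle. You never show that $Q$ lies in the thick closure of $\text{Prod}(\omega)$ \emph{computed in} $\Kzn{\AUR}$. That is, you do not show that the syzygies $L_i$ of the resolution $Q\in\text{Prod}(\omega)^\land_n$ given by the definition lie in $\Kzn{\AUR}$, which is what makes those triangles in $\derivedCat(\AUR)$ into $\additiveReal$-conflations. The paper's proof passes over this in one line ("by definition"), so you are right that it needs checking. None of your suggested routes supplies it:
\begin{itemize}
\item Theorem~\ref{thm: AT} presupposes that $\text{Prod}(\omega)$ is already silting in $\Kzn{\AUR}$, so using it here is circular.
\item Proposition~\ref{prp: construction}(b) with $\subcat=\text{Prod}(\omega)$ concerns the different category $\text{Prod}(\omega)^{[-n,0]}$.
\item Closure of $\Kzn{\AUR}$ under extensions says nothing about cocones.
\item The claim about cones ``with $\extFunc$-vanishing'' is unsupported.
\item Your fallback criterion is insufficient. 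Having cohomology only in degrees $[0,n]$ does not put an object of $\KbI{\AUR}$ into $\Kzn{\AUR}$; the injective resolution of a module of injective dimension $n+1$ is a counterexample. What must be controlled is the top degree of an injective model.
\end{itemize}

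Here is a short argument that closes the gap. Proceed by induction on $i$. If $L_i\in\Kzn{\AUR}$, then $L_{i+1}$ is the cocone of $Z_i\to L_i$ with both terms in $\Kzn{\AUR}$, so it is a complex of injectives concentrated in degrees $[0,n+1]$. The tail of the resolution gives $L_{i+1}\in\text{Prod}(\omega)\ast\text{Prod}(\omega)[1]\ast\cdots\ast\text{Prod}(\omega)[n-i-1]$. Moreover, $\text{Hom}_{\derivedCat(\AUR)}(M,Z[k])=0$ for every module $M$, every $Z\in\Kzn{\AUR}$ and every $k>n$. Since this vanishing is closed under extensions, $\text{Hom}_{\derivedCat(\AUR)}(M,L_{i+1}[n+1])=0$ for all modules $M$. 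For a bounded complex of injectives in degrees $[0,n+1]$, this group is the cokernel of $\text{Hom}_{\AUR}(M,L_{i+1}^{n})\to\text{Hom}_{\AUR}(M,L_{i+1}^{n+1})$. Taking $M=L_{i+1}^{n+1}$ shows that the last differential is a split epimorphism, so $L_{i+1}$ is homotopy equivalent to a complex in $\Kzn{\AUR}$. With this in place, the rest of your ($\Rightarrow$) argument goes through as in the paper: exact products give $\text{Prod}(Q)\subseteq\text{thick}(\text{Prod}(\omega))$, and then $\Kzn{\AUR}=\text{Prod}(Q)^\lor$.
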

\begin{proof}
    Throughout this proof, for $\subcat\subseteq \Kzn{\AUR}\subseteq \KbI{\AUR}$, let $\text{thick}(\subcat)$ denote the corresponding thick subcategory of $\Kzn{\AUR}$.

    Let $\omega$ be an $n$-cosilting complex. Then $\omega\in\Kzn{\AUR}$ by Remark~\ref{rmk: cosilting}(3) and $Q\in\text{Prod}(\omega)^\land_n$ by definition. In particular, $Q\in\text{thick}(\text{Prod}(\omega))$. Since $\Kzn{\AUR}$ has exact products, we have that $\text{Prod}(Q)\subseteq\text{thick}(\text{Prod}(\omega))$. Moreover, since $\text{Prod}(Q)$ is a silting subcategory of $\Kzn{\AUR}$, it follows that $\text{thick}(\text{Prod}(\omega))=\Kzn{\AUR}$. Also, by Proposition~\ref{prp: cosilting complex} we know that $\text{Prod}(\omega)$ is a presilting subcategory of $\KbI{\AUR}$. Thus, it follows from Lemma~\ref{lmm: n-cosilting} that $\text{Prod}(\omega)$ is a silting subcategory of $\Kzn{\AUR}$.

    Suppose that $\text{Prod}(\omega)$ is a silting subcategory of $\Kzn{\AUR}$. In particular, this implies that $\text{Prod}(\omega)$ is a presilting subcategory of $\KbI{\AUR}$. Since every complex in $\KbI{\AUR}$ is an iterated extension of shifts of injective $\AUR$-modules, by Proposition~\ref{prp: cosilting complex}, it suffices to show that any thick subcategory of $\KbI{\AUR}$ which contains $\text{Prod}(\omega)$ also contains $\text{Prod}(Q)$. However, this follows from our assumption that $\text{Prod}(\omega)$ is silting in $\Kzn{\AUR}$ and the fact that extensions, cones, cocones and summands in $\Kzn{\AUR}$ coincide with extensions, cones, cocones and summands taken in $\KbI{\AUR}$.
\end{proof}

\begin{Cor}\label{cor: bijections}
    There exists a commutative diagram of bijections
    \begin{center}
        \adjustbox{scale=1.07}{\begin{tikzcd}
            &\bigg\{ \substack{ \text{silting} \\ \text{subcategories} \\ \subcat \text{ of } \Kzn{\AUR} } \ \bigg| \ \substack{ \text{Prod}(\subcat) \\ \rotatebox{90}{=} \\ \subcat } \bigg\} \arrow[shift right]{ddddl}[]{} \arrow[shift right]{ddddr}[]{} \arrow[phantom, very near start, xshift=11mm]{ddddl}[]{\scriptstyle \subcat} \arrow[phantom, very near end, xshift=11mm]{ddddl}[]{\scriptstyle (\subcat^\lor_n, \subcat^\land_n)} \arrow[mapsto, xshift=-9.5mm, yshift=1mm, shorten=9.25mm]{ddddr}[]{} \arrow[mapsto, xshift=11mm, yshift=1mm, shorten=10mm]{ddddl}[]{}  \\ \\ \\ \\
            \bigg\{\substack{ \text{hereditary complete} \\ \text{cotorsion pairs} \\  \cotorsionPair \text{ in } \Kzn{\AUR} } \ \bigg| \ \substack{  \text{Prod}(\cotorsionClass) \\ \rotatebox{90}{=} \\ \cotorsionClass } \bigg\} \arrow[shift right]{uuuur}[]{} \arrow[phantom, very near start, xshift=-11mm]{uuuur}[]{\scriptstyle \cotorsionPair} \arrow[phantom, very near end, xshift=-11mm]{uuuur}[]{\scriptstyle \cotorsionKernel} \arrow[mapsto, very near end, xshift=-11mm, yshift=0.5mm, shorten=10mm]{uuuur}[]{} 
            &&\bigg\{\substack{ \text{equivalence classes} \\ \text{of } n\text{-cosilting} \\ \text{complexes in } \derivedCat(\AUR) }\bigg\} \arrow[]{ll}[]{} \arrow[shift right]{uuuul}[]{} \arrow[phantom, very near start, xshift=-11.5mm]{uuuul}[]{\scriptstyle [\bigoplus_{i=0}^{n}Z_i]} \arrow[phantom, very near start, shift left=10mm, yshift=3.75mm]{ll}[]{\scriptstyle [\omega]} \arrow[phantom, very near end, shift left=10mm, yshift=3.75mm]{ll}[]{\scriptstyle ({}^{\perp_{>0}}\omega,({}^{\perp_{>0}}\omega)^{\perp_1})} \arrow[mapsto, very near start, shift left=10mm, shorten=15mm, xshift=3.25mm, yshift=3.75mm]{ll}[]{} \arrow[phantom, very near start, xshift=11mm]{uuuul}[]{\scriptstyle [\omega]} \arrow[phantom, very near end, xshift=11mm]{uuuul}[]{\scriptstyle \text{Prod}(\omega)} \arrow[mapsto, xshift=11mm, shorten=10mm]{uuuul}[]{}, \\
        \end{tikzcd}}
    \end{center}
    where the $Z_i$ are calculated as follows: given a silting subcategory $\subcat$ as in the upper collection, consider its corresponding cotorsion pair $(\subcat^\lor_n,\subcat^\land_n)$ and an $\subcat$-resolution of $Q$ of length $n$ 
    \begin{align*}
        L_1\to &Z_0\to Q, \\
        L_2\to &Z_1\to L_1, \\
        L_3\to &Z_2\to L_2, \\
               &\vdots \\
        0\to &Z_n\to L_n.
    \end{align*}
\end{Cor}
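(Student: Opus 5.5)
The plan is to assemble Corollary~\ref{cor: bijections} from Proposition~\ref{prp: bijections}, Lemma~\ref{lmm: equivalence}, Lemma~\ref{lmm: bounded}, Proposition~\ref{prp: n-cosilting}, Proposition~\ref{prp: generated} and Lemma~\ref{lmm: dimension}, all applied to the extriangulated category $(\Kzn{\AUR},\extFunc,\additiveReal)$. This category is identified with $\text{Inj}(\AUR)^{[-n,0]}$, and I will take as given that $n$-cosilting complexes live in $\Kzn{\AUR}$ under the shift convention of Remark~\ref{rmk: cosilting}(1),(3).

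\textbf{Step 1: hypotheses.} By Corollary~\ref{cor: construction}, applied with $\triangulatedCat=\derivedCat(\AUR)$ and $\text{Prod}(Q)=\text{Inj}(\AUR)$, the category $\Kzn{\AUR}$ has exact products and the $\extFunc$-injective cogenerator $Q$. It also has enough $\extFunc$-projectives and enough $\extFunc$-injectives, and positive global dimension at most $n$. Hence all objects have finite $\extFunc$-injective dimension, so the hypotheses of Section~\ref{ssec: producers} and of Proposition~\ref{prp: generated} hold.

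\textbf{Step 2: the left edge.} Proposition~\ref{prp: bijections} gives mutually inverse bijections between product-closed silting subcategories and bounded hereditary complete cotorsion pairs with $\text{Prod}(\cotorsionKernel)=\cotorsionKernel$, via $\Psi(\subcat)=(\subcat^\lor,\subcat^\land)$ and $\Phi\cotorsionPair=\cotorsionKernel$.
\begin{itemize}
\item By Proposition~\ref{prp: bounded}, every cotorsion pair in $\Kzn{\AUR}$ is bounded.
\item By Lemma~\ref{lmm: equivalence}, the condition $\text{Prod}(\cotorsionKernel)=\cotorsionKernel$ may be replaced by $\text{Prod}(\cotorsionClass)=\cotorsionClass$.
\item By Lemma~\ref{lmm: dimension}, using positive global dimension $\le n$, we have $\subcat^\lor=\subcat^\lor_n$ and $\subcat^\land=\subcat^\land_n$.
\end{itemize}
Together these give the left edge exactly as drawn.

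\textbf{Step 3: the right edge.} Proposition~\ref{prp: n-cosilting} says $\omega$ is $n$-cosilting if and only if $\text{Prod}(\omega)$ is silting in $\Kzn{\AUR}$. The equivalence relations agree by definition. Hence the producer bijection $\Theta$ of Proposition~\ref{prp: bijections} becomes the bijection with equivalence classes of $n$-cosilting complexes, $[\omega]\mapsto\text{Prod}(\omega)$. Its inverse is $[\bigoplus Z_i]$, computed from an $\subcat$-resolution of $Q$. By Lemma~\ref{lmm: dimension}, $Q\in\subcat^\land=\subcat^\land_n$, so a resolution of length $n$ exists. Lemma~\ref{lmm: producer} applies to any finite resolution, so the choice of resolution does not matter.

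\textbf{Step 4: the bottom edge and commutativity.} Define the bottom arrow as the composite $[\omega]\mapsto\Psi(\text{Prod}(\omega))$; commutativity then holds by construction. It remains to identify this composite with $({}^{\perp_{>0}}\omega,({}^{\perp_{>0}}\omega)^{\perp_1})$, which is exactly Proposition~\ref{prp: generated}, using enough $\extFunc$-projectives from Step 1.

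I expect the main obstacle to be a subtle point: the orthogonals in Proposition~\ref{prp: generated} are taken inside $\Kzn{\AUR}$ with the extriangulated $\extFunc^n$. To match the intended meaning one uses Proposition~\ref{prp: construction}(c), which identifies these with $\text{Ext}^n_{\derivedCat(\AUR)}$, so ${}^{\perp_{>0}}\omega$ is the usual derived-category orthogonal intersected with $\Kzn{\AUR}$. Beyond this check, everything reduces to bookkeeping of the earlier results.
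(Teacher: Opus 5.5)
Your proposal is correct and follows essentially the same route as the paper's proof. The paper uses Lemma~\ref{lmm: dimension} and Proposition~\ref{prp: bounded} to pass to $(\subcat^\lor_n,\subcat^\land_n)$ and drop boundedness, Proposition~\ref{prp: bijections} together with Proposition~\ref{prp: n-cosilting} for the two upper edges, Lemma~\ref{lmm: equivalence} to replace $\text{Prod}(\cotorsionKernel)=\cotorsionKernel$ by $\text{Prod}(\cotorsionClass)=\cotorsionClass$, and Proposition~\ref{prp: generated} for the bottom map, exactly as you do. Your extra checks only make explicit what the paper leaves implicit: deriving the hypotheses from Corollary~\ref{cor: construction}, noting that positive global dimension at most $n$ suffices for Lemma~\ref{lmm: dimension}, and invoking Proposition~\ref{prp: construction}(c) to interpret the orthogonals.
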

\begin{proof} 
    Since $\Kzn{\AUR}$ has finite positive global dimension $n$, it follows from Lemma~\ref{lmm: dimension} that $(\subcat^\lor,\subcat^\land)=(\subcat^\lor_n,\subcat^\land_n)$ for each silting subcategory $\subcat$ of $\Kzn{\AUR}$. Moreover, since it has both enough $\extFunc$-projectives and $\extFunc$-injectives, all cotorsion pairs in $\Kzn{\AUR}$ are bounded by Proposition~\ref{prp: bounded}. Thus, Proposition~\ref{prp: bijections} and Proposition~\ref{prp: n-cosilting} give us mutually inverse bijections
    \begin{center}
        \adjustbox{scale=1.04}{\begin{tikzcd}
            &\bigg\{ \substack{ \text{silting} \\ \text{subcategories} \\ \subcat \text{ of } \Kzn{\AUR} } \ \bigg| \ \substack{ \text{Prod}(\subcat) \\ \rotatebox{90}{=} \\ \subcat } \bigg\} \arrow[shift right]{ddddl}[]{} \arrow[shift right]{ddddr}[]{} \arrow[phantom, very near start, xshift=11mm]{ddddl}[]{\scriptstyle \subcat} \arrow[phantom, very near end, xshift=11mm]{ddddl}[]{\scriptstyle (\subcat^\lor_n, \subcat^\land_n)} \arrow[mapsto, xshift=-9.5mm, yshift=1mm, shorten=9.25mm]{ddddr}[]{} \arrow[mapsto, xshift=11mm, yshift=1mm, shorten=10mm]{ddddl}[]{}  \\ \\ \\ \\
            \bigg\{\substack{ \text{hereditary complete} \\ \text{cotorsion pairs} \\  \cotorsionPair \text{ in } \Kzn{\AUR} } \ \bigg| \ \substack{  \text{Prod}(\cotorsionKernel) \\ \rotatebox{90}{=} \\ \cotorsionKernel } \bigg\} \arrow[shift right]{uuuur}[]{} \arrow[phantom, very near start, xshift=-11mm]{uuuur}[]{\scriptstyle \cotorsionPair} \arrow[phantom, very near end, xshift=-11mm]{uuuur}[]{\scriptstyle \cotorsionKernel} \arrow[mapsto, very near end, xshift=-11mm, yshift=0.5mm, shorten=10mm]{uuuur}[]{} 
            &&\bigg\{\substack{ \text{equivalence classes} \\ \text{of } n\text{-cosilting} \\ \text{complexes in } \derivedCat(\AUR) }\bigg\} \arrow[shift right]{uuuul}[]{} \arrow[phantom, very near start, xshift=-11.5mm]{uuuul}[]{\scriptstyle [\bigoplus_{i=0}^{n}Z_i]} \arrow[phantom, very near start, xshift=11mm]{uuuul}[]{\scriptstyle [\omega]} \arrow[phantom, very near end, xshift=11mm]{uuuul}[]{\scriptstyle \text{Prod}(\omega)} \arrow[mapsto, xshift=11mm, shorten=10mm]{uuuul}[]{},
        \end{tikzcd}}
    \end{center}
    with the $Z_i$ calculated as in the above statement. By Lemma~\ref{lmm: equivalence}, the property in the bottom left corner is equivalent to $\text{Prod}(\cotorsionClass)=\cotorsionClass$. Finally, since $\Kzn{\AUR}$ has an $\extFunc$-injective cogenerator, the bottom map in the statement follows from Proposition~\ref{prp: generated}.
\end{proof}

\begin{Rmk}\label{rmk: dga}
    An analogous result may be obtained by replacing $\AUR$ and $\text{Hom}_\mathbb{Z}(\AUR,\mathbb{Q}/\mathbb{Z})$ with a connective differential graded (dg) algebra $A$ (i.e., a dg algebra such that $A^i=0$ for all $i>0$) and its character dual $A^+$, since the latter is then a cosilting object in the derived category of $A$, by which we can apply Corollary~\ref{cor: construction} using $\text{Prod}(A^+)$. In this case, the objects in $\text{Prod}(A^+)$ are sometimes called \emph{derived injectives} (see \cite[]{genovese2021t-structures} for reference).
\end{Rmk}

\subsection{Large $n$-(co)tilting modules over rings of finite global dimension}\label{ssec: n-(co)tilting}

We prove that, if $\AUR$ has finite global dimension $n$, then Theorem~\ref{thm: A} and Theorem~\ref{thm: B} can be applied directly to $\text{Mod}(\AUR)$, in which case producers coincide with large $n$-cotilting modules.

\begin{Prp-Dfn}\label{prp-dfn: n-cotilting}\cite[\S 2]{hügel2001infinitely}\cite[Definition~4~\&~Proposition~3.5]{bazzoni2004characterization}
    A module $C\in\text{Mod}(\AUR)$ is \emph{cotilting} if it satisfies the following conditions.
    \begin{itemize}
        \item[(i)] $\text{id}(C)<\infty$.
        \item[(ii)] $\text{Ext}_{\AUR}^{i}(C^J,C)=0$ for all $i>0$ and sets $J$.
        \item[(iii)] There exists an injective cogenerator $Q$ of $\text{Mod}(\AUR)$ such that $Q\in\text{Prod}(C)^\land$.
    \end{itemize}
    If, furthermore, $\text{id}(C)\le n$ (which implies that $Q\in\text{Prod}(C)^\land_n$), it is an $n$-\emph{cotilting module}. We say that two cotilting modules $C$ and $C'$ are \emph{equivalent} if $\text{Prod}(C)=\text{Prod}(C')$.
\end{Prp-Dfn}

\begin{Prp}\cite[Theorem~4.2]{hügel2001infinitely}\label{prp: n-cotilting}
    Let $\cotorsionClass\subseteq\text{Mod}(\AUR)$ be closed under cocones in $\text{Mod}(\AUR)$ and such that $\cotorsionClass\cap\cotorsionClass^{\perp_{>0}}$ is closed under products in $\text{Mod}(\AUR)$. The following conditions are equivalent.
    \begin{enumerate}[label=\ThmLbl]
        \item There exists an $n$-cotilting module $C\in\text{Mod}(\AUR)$ such that $\cotorsionClass={}^{\perp_{>0}}C$.
        \item $\text{id}(\cotorsionClass^{\perp_{>0}})\le n$ and, for all $M\in\text{Mod}(\AUR)$, there exists an epimorphic right $\cotorsionClass$-approximation of $M$ with kernel in $\cotorsionClass^{\perp_1}$.
    \end{enumerate}
\end{Prp}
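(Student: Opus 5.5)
The implication (a)$\Rightarrow$(b) will go through Auslander--Buchweitz approximation theory in $\text{Mod}(\AUR)$. The implication (b)$\Rightarrow$(a) will build the cotilting module $C$ from a finite $(\cotorsionClass\cap\cotorsionClass^{\perp_{>0}})$-resolution of $Q$, exactly as in Lemma~\ref{lmm: producer}.

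\textbf{(b)$\Rightarrow$(a).} First I would show that $\cotorsionClass$ contains all projectives: an epimorphic right $\cotorsionClass$-approximation $X\to P$ of a projective $P$ splits. I also need $\cotorsionClass$ closed under summands. The approximation hypothesis makes $(\cotorsionClass,\cotorsionClass^{\perp_1})$ a complete cotorsion pair via the usual Salce-type pushout argument, so $\cotorsionClass={}^{\perp_1}(\cotorsionClass^{\perp_1})$. Since $\cotorsionClass$ is closed under cocones and contains the projectives, Lemma~\ref{lmm: generated} gives $\cotorsionClass^{\perp_1}=\cotorsionClass^{\perp_{>0}}=:\cotorsionfreeClass$, so the pair is hereditary. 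Put $\mathcal{W}:=\cotorsionClass\cap\cotorsionfreeClass$, which is product-closed by hypothesis.

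Next I would resolve $Q\in\cotorsionfreeClass$ by iterated special $\cotorsionClass$-precovers $L_{i+1}\to Z_i\to L_i$. Each $L_{i+1}$ lies in $\cotorsionfreeClass$, and so does $Z_i$, since $\cotorsionfreeClass$ is closed under extensions; hence $Z_i\in\mathcal{W}$. Because $\text{id}(\cotorsionfreeClass)\le n$, dimension shifting (Lemma~\ref{lmm: shifting}) gives $\text{Ext}^1_\AUR(L_n,L_{n+1})\simeq \text{Ext}^{n+1}_\AUR(Q,L_{n+1})=0$ and $\text{Ext}^1_\AUR(X,L_n)=0$ for $X\in\cotorsionClass$. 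The first vanishing lets me truncate the resolution at length $n$ with $L_n\in\mathcal{W}$, exactly as in Lemma~\ref{lmm: dimension}. The same argument shows $\cotorsionfreeClass=\mathcal{W}^\land_n$.

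Set $C:=\bigoplus Z_i$. Then $\text{id}(C)\le n$ because $C\in\cotorsionfreeClass$. Also $C^J\in\mathcal{W}$ gives $\text{Ext}^{>0}_\AUR(C^J,C)=0$, and $Q\in\text{Prod}(C)^\land_n$ by construction. So $C$ is $n$-cotilting. Finally, $\cotorsionClass\subseteq{}^{\perp_{>0}}C$ is clear. Conversely, if $\text{Ext}^{>0}_\AUR(M,C)=0$, then $\text{Ext}^{>0}_\AUR(M,\mathcal{W})=0$ by Remark~\ref{rmk: products}(1), and dimension shifting along $\mathcal{W}$-resolutions gives $\text{Ext}^{1}_\AUR(M,\mathcal{W}^\land_n)=0$. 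Hence $M\in{}^{\perp_1}\cotorsionfreeClass=\cotorsionClass$.

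\textbf{(a)$\Rightarrow$(b).} Here $\cotorsionClass={}^{\perp_{>0}}C$, and I would take $\mathcal{W}=\text{Prod}(C)$. For any $M$, $\text{Ext}^{i}_\AUR(\Omega^nM,C)\simeq\text{Ext}^{i+n}_\AUR(M,C)=0$, so every $n$-th syzygy lies in $\cotorsionClass$; in particular every module has a finite $\cotorsionClass$-resolution. To make $\mathcal{W}$ a cogenerator of $\cotorsionClass$, I would take the product map $X\to C^{\text{Hom}(X,C)}$ for $X\in\cotorsionClass$ and show it is a monomorphism with cokernel in $\cotorsionClass$. This should follow by applying $\text{Hom}(-,C)$ and using the finite $\text{Prod}(C)$-resolution of the cogenerator $Q$.

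With $\cotorsionClass$ resolving and $\mathcal{W}$ an Ext-injective cogenerator, the Auslander--Buchweitz construction yields, for each $M$, an epimorphism $X\to M$ with $X\in\cotorsionClass$ and kernel in $\mathcal{W}^\land\subseteq\cotorsionClass^{\perp_{>0}}$. It remains to bound injective dimensions. I would show $\cotorsionClass^{\perp_{>0}}=\mathcal{W}^\land$, the inclusion $\supseteq$ being clear and $\subseteq$ following from approximating and splitting. Then $\text{id}(\mathcal{W}^\land)\le n$ follows from $\text{id}(C)\le n$: objects of injective dimension $\le n$ are closed under products, extensions and cones, the last because $\text{Ext}^{n+2}$ vanishes.

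I expect the main obstacle to be the cogenerator step in (a)$\Rightarrow$(b), namely that $X\to C^{\text{Hom}(X,C)}$ is a monomorphism with cokernel in $\cotorsionClass$. The Auslander--Buchweitz machinery rests entirely on it. If the direct argument fails, I would fall back on the known product- and direct-limit-closure of cotilting classes.
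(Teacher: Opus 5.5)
The paper does not prove this statement; it quotes it from \cite[Theorem~4.2]{hügel2001infinitely}, so I am judging your argument on its own. Your (a)$\Rightarrow$(b) is sound. The step you flag is the standard one and works as you sketch it. Exactness of $\text{Hom}_\AUR(X,-)$ on the $\text{Prod}(C)$-resolution of $Q$ shows that $X$ is cogenerated by $C$. The universal property of $X\to C^{\text{Hom}_\AUR(X,C)}$, together with dimension shifting, then puts the cokernel in ${}^{\perp_{>0}}C$. Splitting this sequence also gives $\cotorsionClass\cap\cotorsionClass^{\perp_{>0}}=\text{Prod}(C)$, which your ``approximating and splitting'' step needs.

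\textbf{Main gap: the last step of (b)$\Rightarrow$(a).} From $\text{Ext}^{>0}_\AUR(M,C)=0$, Remark~\ref{rmk: products}(1) only gives $\text{Ext}^{>0}_\AUR(M,\text{Prod}(C))=0$. Passing to $\mathcal{W}$ requires $\mathcal{W}\subseteq\text{Prod}(C)$, which you never prove.

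In Lemma~\ref{lmm: producer} this identification comes from maximality of silting subcategories (Lemma~\ref{lmm: maximal}), which needs a bounded setting. That is unavailable here: if $\AUR$ has infinite global dimension, $\text{thick}(\text{Prod}(C))$ contains only modules of finite injective dimension, so $\text{Prod}(C)$ is not silting in $\text{Mod}(\AUR)$.

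One way to close the gap uses the cogenerator lemma from your (a)$\Rightarrow$(b), which applies once $C$ is known to be $n$-cotilting.
\begin{itemize}
\item For $Y\in\cotorsionfreeClass\cap{}^{\perp_{>0}}C$, iterate the lemma to get sequences $0\to Y_k\to C_k\to Y_{k+1}\to 0$ with $Y_0=Y$, $C_k\in\text{Prod}(C)$, and all $Y_k\in\cotorsionfreeClass\cap{}^{\perp_{>0}}C$.
\item Dimension shifting gives $\text{Ext}^1_\AUR(Y_{n+1},Y_n)\simeq\text{Ext}^{n+1}_\AUR(Y_{n+1},Y)=0$, so $Y_n\in\text{Prod}(C)$.
\item Split back down to $Y$ using $\text{Ext}^1_\AUR(\text{Prod}(C),\cotorsionfreeClass)=0$. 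This is where product-closure of $\mathcal{W}$ enters, since $C^I\in\mathcal{W}$.
\end{itemize}
Hence $\cotorsionfreeClass\cap{}^{\perp_{>0}}C\subseteq\text{Prod}(C)$. Now take $M\in{}^{\perp_{>0}}C$. The kernel of a special $\cotorsionClass$-precover $X\to M$ lies in this class, because ${}^{\perp_{>0}}C$ is closed under cocones. So the precover splits.

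\textbf{Smaller gap: summand-closure of $\cotorsionClass$.} The approximation hypothesis does not give $\cotorsionClass={}^{\perp_1}(\cotorsionClass^{\perp_1})$. Splitting special precovers only shows that $\cotorsionClass':={}^{\perp_1}(\cotorsionClass^{\perp_1})$ is the summand-closure of $\cotorsionClass$. Your claim that projectives lie in $\cotorsionClass$ uses the same unproved closure, and so does the final splitting above. Summand-closure does follow from the hypotheses, as follows.
\begin{itemize}
\item $\cotorsionClass'$ contains the projectives and is closed under cocones: given $A\oplus A',B\oplus B'\in\cotorsionClass$ and an epimorphism $A\to B$, consider $A\oplus A'\oplus B'\to B\oplus B'$. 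So Lemma~\ref{lmm: generated} applies to $\cotorsionClass'$ and gives heredity.
\item For $W=A\oplus B\in\mathcal{W}$, the swindle $W^{\mathbb{N}}\simeq A\oplus(B\oplus W^{\mathbb{N}})$ with $B\oplus W^{\mathbb{N}}\simeq W^{\mathbb{N}}$ shows that $\mathcal{W}$ is closed under summands.
\item Splitting a special precover of an object of $\cotorsionClass'\cap\cotorsionfreeClass$ then shows $\cotorsionClass'\cap\cotorsionfreeClass=\mathcal{W}$.
\item For $A\in\cotorsionClass'$, Salce's pullback gives $0\to A\to B\to X\to 0$ with $X\in\cotorsionClass$ and $B\in\mathcal{W}$. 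Closure of $\cotorsionClass$ under cocones then gives $A\in\cotorsionClass$.
\end{itemize}
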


The following is a rephrasing of \cite[Theorem~2.4]{saroch2007completeness}.

\begin{Lmm}[]\label{lmm: n-cotilting}
    Let $C\in\text{Mod}(\AUR)$ be an $n$-cotilting module. Then the cotorsion pair $\cotorsionPair$ in $\text{Mod}(\AUR)$ generated by ${}^{\perp_{>0}}C$ is complete, hereditary, and such that $\text{Prod}(\cotorsionKernel)=\cotorsionKernel$. In particular, $\cotorsionPair=({}^{\perp_{>0}}C,({}^{\perp_{>0}}C)^{\perp_1})$ and $\cotorsionKernel=\text{Prod}(C)$.
\end{Lmm}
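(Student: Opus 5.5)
The plan is to derive everything from the structure theory of cotilting modules (Proposition~\ref{prp: n-cotilting}) together with the general facts already established for $\text{Mod}(\AUR)$, which has enough projectives, enough injectives and exact products. Let $C$ be $n$-cotilting and put $\cotorsionClass:={}^{\perp_{>0}}C$. By Lemma~\ref{lmm: generated'}, the cotorsion pair generated by ${}^{\perp_{>0}}C$ is hereditary and equals $({}^{\perp_{>0}}C,({}^{\perp_{>0}}C)^{\perp_1})$. This already gives the "in particular" description of the pair and heredity, so it remains to prove completeness and to identify the kernel.

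For completeness, I would first check the hypotheses of Proposition~\ref{prp: n-cotilting}. The class $\cotorsionClass$ is closed under cocones by Remark~\ref{rmk: heredity}(1). For product-closure of $\cotorsionClass\cap\cotorsionClass^{\perp_{>0}}$, I would identify this kernel with $\text{Prod}(C)$, which is product-closed by definition. The identification goes in two directions.
\begin{itemize}
\item $\text{Prod}(C)\subseteq{}^{\perp_{>0}}C$ by condition (ii) of Proposition-Definition~\ref{prp-dfn: n-cotilting}, using additivity and the fact that summands of $C^J$ lie there.
\item $\text{Prod}(C)\subseteq\cotorsionClass^{\perp_{>0}}$ by Remark~\ref{rmk: products}(1), since $\cotorsionClass^{\perp_{>0}}=\cotorsionClass^{\perp_1}$ (Lemma~\ref{lmm: generated}) is closed under products and contains $C$.
\item Conversely, if $K$ lies in the kernel, then $K\in\cotorsionClass^{\perp_{>0}}$ has injective dimension $\le n$ by Proposition~\ref{prp: n-cotilting}(b). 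I would then use the $\text{Prod}(C)$-resolution of $Q$ from (iii), or better the maximality Lemma~\ref{lmm: maximal} applied to $\text{Prod}(C)$ as a silting subcategory of $\text{Mod}(\AUR)$. Here $\text{Prod}(C)$ is silting because $Q\in\text{Prod}(C)^\land$ gives $\text{Inj}\subseteq\text{thick}(\text{Prod}(C))$, and finite injective dimension of all modules is \emph{not} available. So instead I argue directly: $\text{Prod}(C)\cup\{K\}$ is self-orthogonal, since $K\in{}^{\perp_{>0}}C$, and $K\in{}^{\perp_{>0}}K$ by heredity.
\end{itemize}
The main obstacle is this last inclusion, kernel $\subseteq\text{Prod}(C)$, in a ring of possibly infinite global dimension.

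My preferred route for that step is to use the epimorphic right $\cotorsionClass$-approximation supplied by Proposition~\ref{prp: n-cotilting}(b). I would apply it iteratively to obtain a $\text{Prod}(C)$-coresolution of $K$, finite because $\text{id}(K)\le n$. Dimension shifting (Lemma~\ref{lmm: shifting}) then makes the last conflation split, which pushes $K$ into $\text{Prod}(C)$ as in the proof of Lemma~\ref{lmm: dimension}. There is some circularity here, since Proposition~\ref{prp: n-cotilting} needs the product-closure hypothesis. I would avoid it by first proving $\cotorsionClass^{\perp_{>0}}\cap\cotorsionClass=\text{Prod}(C)$ via this coresolution argument, where the coresolution uses injective envelopes and the dual Auslander--Buchweitz approximations from $Q\in\text{Prod}(C)^\land_n$. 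This is essentially the content of \cite[Theorem~2.4]{saroch2007completeness}, so I would cite it for the hardest point.

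With product-closure in hand, Proposition~\ref{prp: n-cotilting} gives special right $\cotorsionClass$-approximations for every module. Salce's lemma then gives the left ones: take an injective envelope $M\to I$, approximate $I$, and form the pullback. Together these yield completeness.
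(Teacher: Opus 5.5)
Your overall architecture matches the paper's proof:
\begin{itemize}
\item Lemma~\ref{lmm: generated'} for heredity and the shape of the pair;
\item Remark~\ref{rmk: heredity}(1) for closure under cocones;
\item product-closure of the kernel via $\cotorsionClass\cap\cotorsionClass^{\perp_{>0}}=\text{Prod}(C)$;
\item Proposition~\ref{prp: n-cotilting} and Salce's lemma for completeness.
\end{itemize}
One small slip: in Salce's lemma you approximate the cokernel $I/M$ and pull back along $I\to I/M$; you do not approximate $I$ itself.

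Your inclusion $\text{Prod}(C)\subseteq\cotorsionClass\cap\cotorsionClass^{\perp_{>0}}$ is fine. The gap is the reverse inclusion, which you never actually establish. The paper does not prove it either; it imports it from \cite[Lemma~2.3(b)~\&~Lemma~2.4(b)(ii)]{hügel2001infinitely}. Your ``preferred route'' cannot work as stated, for three reasons.
\begin{itemize}
\item Iterating epimorphic right $\cotorsionClass$-approximations builds resolutions, not coresolutions. Moreover, for $K\in\cotorsionClass$ a special right $\cotorsionClass$-approximation $0\to Y\to X\to K\to 0$ simply splits (one may take $X=K$), so it says nothing about $K$.
\item It presupposes Proposition~\ref{prp: n-cotilting}, whose hypothesis is exactly what you are trying to verify. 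Your bound $\text{id}(K)\le n$ was obtained the same circular way.
\item Citing \cite[Theorem~2.4]{saroch2007completeness} ``for the hardest point'' is citing the very result this lemma is stated to rephrase.
\end{itemize}

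The missing idea is a \emph{left} $\text{Prod}(C)$-approximation.
\begin{itemize}
\item Take $M\in{}^{\perp_{>0}}C$ and embed $M\hookrightarrow Q^I$. The product of a finite $\text{Prod}(C)$-resolution of $Q$ gives a short exact sequence $0\to L\to C_0\to Q^I\to 0$ with $C_0\in\text{Prod}(C)$ and $L\in\text{Prod}(C)^\land$.
\item Shifting along the finite resolution of $L$ gives $\text{Ext}^1_\AUR(M,L)=0$, so the embedding factors through $C_0$. Hence $M$ is cogenerated by $C$.
\item It follows that the evaluation map $M\to C^{\text{Hom}_\AUR(M,C)}$ is a monic left $\text{Prod}(C)$-approximation. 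The long exact sequence then puts its cokernel $N$ in ${}^{\perp_{>0}}C$.
\item If $K$ lies in the kernel, then $\text{Ext}^1_\AUR(N,K)=0$. So $0\to K\to C^{J}\to N\to 0$ splits and $K\in\text{Prod}(C)$.
\end{itemize}

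Alternatively, your maximality idea can be rescued. First, $\text{id}(K)\le n$ holds directly: $\text{Ext}^{n+1}_\AUR(M,K)\simeq\text{Ext}^1_\AUR(\Omega^n(M),K)$, and $\Omega^n(M)\in{}^{\perp_{>0}}C$ because $\text{id}(C)\le n$. Then $\text{Prod}(C)$ is silting in the extension-closed subcategory of modules of finite injective dimension. Lemma~\ref{lmm: maximal}, applied there to the additive closure of $\text{Prod}(C)\cup\{K\}$, yields $K\in\text{Prod}(C)$.

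With either patch, the rest of your argument goes through as in the paper.
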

\begin{proof}
    Let $C$ be an $n$-cotilting module. Note that ${}^{\perp_{>0}}C$ is closed under cocones in $\text{Mod}(\AUR)$ by Remark~\ref{rmk: heredity}(1), whereas \cite[Lemma~2.3(b)~\&~Lemma~2.4(b)(ii)]{hügel2001infinitely} imply that ${}^{\perp_{>0}}C\cap({}^{\perp_{>0}}C)^{\perp_{>0}}=\text{Prod}(C)$. Thus, we can apply Proposition~\ref{prp: n-cotilting} to obtain that, for all $M\in\text{Mod}(\AUR)$, there exists an epimorphic right ${}^{\perp_{>0}}C$-approximation of $M$ with kernel in $({}^{\perp_{>0}}C)^{\perp_1}$. Now, by Lemma~\ref{lmm: generated'}, the cotorsion pair in $\text{Mod}(\AUR)$ generated by ${}^{\perp_{>0}}C$ is hereditary and equal to $({}^{\perp_{>0}}C,({}^{\perp_{>0}}C)^{\perp_1})$. It then follows from Salce's lemma that it is also complete. Moreover, Lemma~\ref{lmm: generated} implies that ${}^{\perp_{>0}}C\cap({}^{\perp_{>0}}C)^{\perp_{1}}=\text{Prod}(C)$, which completes the proof.
\end{proof}

\begin{Prp}[]\label{prp: n-cotilting'}
    Let $C\in\text{Mod}(\AUR)$. Then $C$ is an $n$-cotilting module if, and only if, $\text{Prod}(C)$ is an $n$-cotilting subcategory of $\text{Mod}(\AUR)$.
\end{Prp}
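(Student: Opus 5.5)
Both directions come down to unpacking the two definitions. By Definition~\ref{dfn: subcategories'}(d), applied in $\text{Mod}(\AUR)$ with its structure from Remark~\ref{rmk: Mod(R)}, $\text{Prod}(C)$ is an $n$-cotilting subcategory if three things hold. First, it is presilting, i.e. closed under summands (automatic for $\text{Prod}$) with $\text{Ext}^i_\AUR(\text{Prod}(C),\text{Prod}(C))=0$ for $i>0$. Second, $\text{Prod}(C)\subseteq\text{Inj}(\AUR)^\lor_n$. Third, $\text{Inj}(\AUR)\subseteq\text{Prod}(C)^\land_n$. I match these one by one with conditions (i)--(iii) of Proposition-Definition~\ref{prp-dfn: n-cotilting}, using $\text{Inj}(\AUR)=\text{Prod}(Q)$ from Remark~\ref{rmk: products}(2).

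\emph{($\Rightarrow$)} Let $C$ be $n$-cotilting.
\begin{itemize}
\item[Presilting.] Condition (ii) gives $C^J\in{}^{\perp_{>0}}C$. Condition (i) bounds the injective dimension, so $\text{Ext}^i(C^J,-)$ vanishes for $i>n$ and only finitely many degrees matter. Since ${}^{\perp_{>0}}C$ is closed under summands, every summand of some $C^J$ lies in it. Hence $\text{Ext}^i(\text{Prod}(C),C)=0$. By the monomorphism of Condition~\ref{cdt: mono} (valid since $\text{Mod}(\AUR)$ has enough projectives, Example~\ref{exp: mono}), this extends to $\text{Ext}^i(\text{Prod}(C),\text{Prod}(C))=0$. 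Alternatively, Lemma~\ref{lmm: n-cotilting} gives $\text{Prod}(C)=\cotorsionKernel$ for a hereditary cotorsion pair, which is presilting by Remark~\ref{rmk: heredity}(3).
\item[Injective dimension.] $\text{id}(C)\le n$ passes to products and summands. Products of injective coresolutions are injective coresolutions, since products are exact in $\text{Mod}(\AUR)$ and products of injectives are injective. Summands have injective dimension bounded by that of the whole module. So $\text{Prod}(C)\subseteq\text{Inj}(\AUR)^\lor_n$.
\item[Resolutions of injectives.] $Q\in\text{Prod}(C)^\land_n$ holds by the remark in Proposition-Definition~\ref{prp-dfn: n-cotilting}; Lemma~\ref{lmm: dimension} would also give it if global dimension were finite, but here I rely on the cited fact. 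This has to be upgraded to every injective, i.e. every summand of some $Q^J$. Products of the length-$n$ resolutions give $Q^J\in\text{Prod}(C)^\land_n$. For summands, I use that $\text{Prod}(C)^\land$ is closed under summands: by Theorem~\ref{thm: AT} or \cite[Proposition~4.7]{adachi2022hereditary} it is the smallest subcategory containing $\text{Prod}(C)$ closed under summands, extensions and cones. The length bound for summands then needs a dimension-shifting argument as in Lemma~\ref{lmm: dimension}, using $\text{id}\le n$ in place of global dimension.
\end{itemize}

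\emph{($\Leftarrow$)} Let $\text{Prod}(C)$ be $n$-cotilting. Then $C\in\text{Inj}^\lor_n$ gives (i) with $\text{id}(C)\le n$, presilting gives (ii) because $C^J\in\text{Prod}(C)$, and $Q\in\text{Inj}\subseteq\text{Prod}(C)^\land_n$ gives (iii).

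\textbf{Main obstacle.} The only non-formal step is the summand-closure of $\text{Prod}(C)^\land_n$ for injective summands of $Q^J$. If the dimension-shifting argument above does not go through cleanly, I would fall back on the splitting argument from Remark~\ref{rmk: products}(2). That would first show $\text{Prod}(C)^\land_n\cap{}^{\perp_1}$-type closure via the cotorsion pair $({}^{\perp_{>0}}C,({}^{\perp_{>0}}C)^{\perp_1})$ from Lemma~\ref{lmm: n-cotilting}, and then build a resolution of length $\le n$ for any injective $I$ directly. The idea is to take special ${}^{\perp_{>0}}C$-precovers iteratively and note that the $n$-th syzygy lands in ${}^{\perp_{>0}}C\cap({}^{\perp_{>0}}C)^{\perp_1}=\text{Prod}(C)$, by dimension shifting against $\text{id}\le n$.
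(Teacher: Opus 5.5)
Your proof is correct. The ($\Leftarrow$) direction and the presilting part of ($\Rightarrow$) are exactly the paper's. For the two remaining inclusions you take a more self-contained route. The paper simply cites \cite[Lemma~8.1.4~(a)~\&~(c)]{göbel2006approximations} for $\text{Prod}(C)\subseteq\text{Inj}(\AUR)^\lor_n$. It cites \cite[Proposition~8.1.5]{göbel2006approximations}, i.e.\ $({}^{\perp_{>0}}C)^{\perp_1}=\text{Prod}(C)^\land_n$, for $\text{Inj}(\AUR)\subseteq\text{Prod}(C)^\land_n$.

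You get the first inclusion directly from exactness of products and additivity of $\text{Ext}$, which is fine.

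For the second, your main route works. Once injectives lie in $\text{Prod}(C)^\land$, take a resolution of length $m>n$. Its last term $L_m\in\text{Prod}(C)$ satisfies $\text{Ext}^1_\AUR(L_{m-1},L_m)\simeq\text{Ext}^m_\AUR(I,L_m)=0$, by Lemma~\ref{lmm: shifting} and $\text{id}(L_m)\le n$. So $L_m$ splits off and the length drops; this even gives $\text{Prod}(C)^\land=\text{Prod}(C)^\land_n$.

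Your fallback is cleaner, and is essentially a proof of the half of \cite[Proposition~8.1.5]{göbel2006approximations} that is actually needed. Iterate special precovers $Y_{i+1}\to X_i\to Y_i$ for the pair $\cotorsionPair$ of Lemma~\ref{lmm: n-cotilting}, starting from $Y_0=I\in\cotorsionfreeClass$.
\begin{itemize}
\item Each $X_i$ lies in $\cotorsionKernel=\text{Prod}(C)$ because $\cotorsionfreeClass$ is extension-closed; you should say this explicitly.
\item $\text{Ext}^k_\AUR(Y_n,C)\simeq\text{Ext}^{n+k}_\AUR(I,C)=0$ puts $Y_n$ in $\text{Prod}(C)$.
\end{itemize}
This dispenses with both the parenthetical claim in Proposition-Definition~\ref{prp-dfn: n-cotilting} (justified only outside the paper) and any summand-closure result. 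It also shows $\cotorsionfreeClass\subseteq\text{Prod}(C)^\land_n$. The paper's version is shorter; yours shows that, beyond Lemma~\ref{lmm: n-cotilting}, only dimension shifting is needed.

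There are two small slips.
\begin{itemize}
\item $\text{id}(C)\le n$ controls $\text{Ext}^i_\AUR(-,C)$, not $\text{Ext}^i_\AUR(C^J,-)$. This is harmless, since (ii) already covers all $i>0$.
\item Theorem~\ref{thm: AT} cannot supply summand-closure of $\text{Prod}(C)^\land$. When $\AUR$ has infinite global dimension, $\text{Prod}(C)$ need not be silting in $\text{Mod}(\AUR)$. Only the presilting statement \cite[Proposition~4.7]{adachi2022hereditary} is usable there.
\end{itemize}
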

\begin{proof}
    Suppose that $\text{Prod}(C)$ is an $n$-cotilting subcategory of $\text{Mod}(\AUR)$. Note that: $\text{Prod}(C)$ being a presilting subcategory of $\text{Mod}(\AUR)$ implies that $\text{Ext}_{\AUR}^{i}(C^J,C)=0$ for all $i>0$ and sets $J$; the inclusion $\text{Prod}(C)\subseteq\text{Inj}(\AUR)^\lor_n$ in particlar implies that $\text{id}(C)\le n$; the inclusion $\text{Inj}(\AUR)\subseteq\text{Prod}(C)^\land_n$ in particular implies that the injective cogenerator $\text{Hom}_\mathbb{Z}(\AUR,\mathbb{Q}/\mathbb{Z})\in\text{Prod}(C)^\land$. Thus, $C$ is an $n$-cotilting module.

    Conversely, suppose that $C$ is an $n$-cotilting module. It follows from Proposition-Definition~\ref{prp-dfn: n-cotilting}(ii) that $\text{Prod}(C)$ is a presilting subcategory of $\text{Mod}(\AUR)$. Now, consider the hereditary cotorsion pair $({}^{\perp_{>0}}C,({}^{\perp_{>0}}C)^{\perp_1})$ from Lemma~\ref{lmm: n-cotilting}. Then, by \cite[Lemma~8.1.4 (a)~\&~(c)]{göbel2006approximations}, we have that $\text{Prod}(C)\subseteq\text{Inj}(\AUR)^\lor_n$, whereas \cite[Proposition~8.1.5]{göbel2006approximations} implies that $({}^{\perp_{>0}}C)^{\perp_1}=\text{Prod}(C)^\land_n$, from which it follows that $\text{Inj}(\AUR)\subseteq\text{Prod}(C)^\land_n$. 
\end{proof}

\begin{Cor}[]\label{cor: n-cotilting}
    If $\AUR$ has finite global dimension $n$, then $C\in\text{Mod}(\AUR)$ is an $n$-cotilting module if, and only if, $\text{Prod}(C)$ is a silting subcategory of $\text{Mod}(\AUR)$.
\end{Cor}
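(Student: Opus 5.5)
The plan is to reduce the statement to Proposition~\ref{prp: n-cotilting'}, which says that $C$ is an $n$-cotilting module if, and only if, $\text{Prod}(C)$ is an $n$-cotilting subcategory of $\text{Mod}(\AUR)$. It then remains to show that, when $\AUR$ has global dimension $n$, a subcategory $\text{Prod}(C)$ is $n$-cotilting exactly when it is silting. This is the dual of Proposition~\ref{prp: n-(co)tilting}, applied in the extriangulated category $\text{Mod}(\AUR)$ of Remark~\ref{rmk: Mod(R)}.

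First, finite global dimension $n$ of $\AUR$ means $\text{Ext}_{\AUR}^{n+1}=0$, and $\text{Ext}^n_\AUR\neq0$ when $n$ is exactly the global dimension. So $\text{Mod}(\AUR)$ has positive global dimension $n$ in the sense of Definition~\ref{dfn: global}. It also has enough projectives and injectives. Hence the dual of Proposition~\ref{prp: n-(co)tilting} applies, with the following ingredients.
\begin{itemize}
\item The dual of Lemma~\ref{lmm: n-(co)tilting} gives $\text{Inj}(\AUR)^\lor_n=\text{Mod}(\AUR)$.
\item The dual of Lemma~\ref{lmm: dimension} gives $\subcat^\land=\subcat^\land_n$ for a presilting $\subcat$.
\item Theorem~\ref{thm: AT} gives $\text{Inj}(\AUR)\subseteq\subcat^\land$, since injectives lie in the right class of any cotorsion pair (Remark~\ref{rmk: orthogonality}).
\end{itemize}
Together these show that a subcategory is silting if and only if it is $n$-cotilting.

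I would write out the dual argument explicitly. Suppose $\subcat=\text{Prod}(C)$ is silting. Then $(\subcat^\lor,\subcat^\land)$ is a cotorsion pair, so $\text{Inj}(\AUR)\subseteq\subcat^\land=\subcat^\land_n$. Also $\subcat\subseteq\text{Inj}(\AUR)^\lor_n$, because every module has injective dimension at most $n$. Hence $\subcat$ is $n$-cotilting.

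Conversely, suppose $\subcat$ is $n$-cotilting. Then $\text{Inj}(\AUR)\subseteq\subcat^\land_n\subseteq\text{thick}(\subcat)$. Since $\text{Mod}(\AUR)=\text{Inj}(\AUR)^\lor_n$, closure under cones and extensions gives $\text{thick}(\subcat)=\text{Mod}(\AUR)$, so $\subcat$ is silting.

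The main subtlety is matching the definitions. The statement uses silting in the extriangulated sense, with positive extensions $\extFunc^n=\text{Ext}^n_\AUR$ and presilting meaning closure under summands. $\text{Prod}(C)$ is closed under summands by construction, so this matches. The $n$ in ``$n$-cotilting'' also has to agree with the positive global dimension. If the global dimension were strictly less than $n$, I would instead use the bound $\extFunc^{n+1}=0$ directly in the dimension-shifting proofs, which only require vanishing above $n$. Combining this with Proposition~\ref{prp: n-cotilting'} finishes the proof.
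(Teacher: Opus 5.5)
Your proposal is correct and follows the paper's proof: the paper deduces the corollary directly from Proposition~\ref{prp: n-cotilting'} together with the dual of Proposition~\ref{prp: n-(co)tilting}, which is exactly your reduction, and your explicit write-up simply unpacks that dual proposition. One small slip: the equality $\subcat^\land=\subcat^\land_n$ is Lemma~\ref{lmm: dimension} itself rather than its dual, since that lemma already covers both $\subcat^\land$ and $\subcat^\lor$.
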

\begin{proof}
    This follows directly from Proposition~\ref{prp: n-cotilting'} and the dual of Proposition~\ref{prp: n-(co)tilting}.
\end{proof}

\begin{Rmk}\label{rmk: analog}
    The previous result tells us that, in the context of $\text{Mod}(\AUR)$ for a given ring $\AUR$ of finite global dimension $n$, the producers of the product-closed silting subcategories of $\text{Mod}(\AUR)$ coincide precisely with the $n$-cotilting modules over $\AUR$. Note that in this case all objects in $\text{Mod}(\AUR)$ have finite injective dimension, and moreover $\text{Mod}(\AUR)$ has positive extensions, exact products, an injective cogenerator and enough projectives by Remark~\ref{rmk: Mod(R)}. It follows that we can easily obtain an analog of Corollary~\ref{cor: bijections}, with the bottom right corner being the collection of equivalence classes of $n$-cotilting modules, essentially by swapping Proposition~\ref{prp: n-cosilting} for Corollary~\ref{cor: n-cotilting} in the proof. Moreover, by Theorem~\ref{thm: A}, in this case we can mutate $n$-cotilting modules directly in $\text{Mod}(\AUR)$ via large silting mutation.

    By similar arguments, if $\AUR$ has finite global dimension $n$, then analogs of Theorem~\ref{thm: A} and Theorem~\ref{thm: B} for coproduct-closed silting subcategories can be applied to $\text{Mod}(\AUR)$, in which case their coproducers coincide with large $n$-tilting modules.
\end{Rmk}

We end by exhibiting a phenomenon of large silting mutation which does not occur in the usual setting, and which appears in the well-known case of the Kronecker algebra.

\begin{Exp}\label{exp: n-cotilting}
    Let
    \begin{itemize}
        \item $\ACF$ be an algebraically closed field;
        \item $A$ denote the path $\ACF$-algebra of the Kronecker quiver \begin{tikzcd} \bullet \arrow[shift left]{r}[]{}\arrow[shift right]{r}[]{} &\bullet\end{tikzcd};
        \item $\mathbb{X}$ denote the projective line over $\ACF$;
        \item $P$ be a subset of $\mathbb{X}$.
    \end{itemize}
    Recall that the finite-dimensional indecomposable regular ($A$-)modules can be partitioned into a family $\{\mathbf{t}_x\}_{x\in\mathbb{X}}$ of tubes indexed over $\mathbb{X}$, with the adic and Prüfer modules corresponding to $\mathbf{t}_x$ respectively denoted by $S_x[-\infty]$ and $S_x[\infty]$, and that the generic module $G$ satisfies $\text{Prod}(G)=\text{Add}(G)$ due to being endofinite. It follows from \cite[Corollary~3.10]{buan2003cotilting} and \cite[Example~4.10]{hügel2025mutation} that all large (1-)cotilting modules are of the form
    \[
        C_P = \bigg(\prod_{x\in P}S_x[-\infty]\bigg)\bigoplus G\bigoplus \bigg(\prod_{y\notin P}S_{y}[\infty]\bigg)
    \] 
    up to equivalence, where $\text{Prod}(G)\subseteq\text{Prod}(S_x[\infty])$ for any $x\in\mathbb{X}$ by \cite[Proposition~3]{ringel1998ziegler}. 

    Now observe that, if some $C_P$ was mutable and its mutation $C_P'$ contained any Prüfer module as a summand, then $C_P'$ would also contain the generic module as a summand (up to equivalence). Thus, the summand $G$ would remain fixed (up to equivalence) during such a mutation, indicating that it might have the approximation properties which enable the mutation itself. This is precisely what occurs in the mutations of this example: by \cite[Lemma~2.4]{buan2003cotilting}, for each $x\in\mathbb{X}$, there exist a set $J_x$ and a short exact sequence
    \[
        0\to S_x[-\infty]\xrightarrow[]{f_x} G^{J_x}\xrightarrow[]{g_x} S_x[\infty]\to 0
    \] 
    with $f_x$ a left $\text{Prod}(G)$-approximation of $S_x[-\infty]$ and $g_x$ a right $\text{Prod}(G)$-approximation of $S_x[\infty]$; since $\text{Mod}(\AUR)$ has exact products, we can combine these with trivial short exact sequences of the form $0\to S_x[-\infty]\to S_x[-\infty]\to 0\to 0$ and $0\to 0\to S_x[\infty]\to S_x[\infty]\to 0$ in order to construct exact sequences 
    \begin{align*}
        0\to S_x[-\infty] \to &G^J\bigoplus C_P/S_x[\infty]\to C_P\to 0, \tag{for $x\notin P\subsetneq\mathbb{X}$} \\
        0\to C_P \to &C_P/S_x[-\infty]\bigoplus G^{J'}\to S_x[\infty]\to 0, \tag{for $x\in P\supsetneq\varnothing$}
    \end{align*}
    with the desired approximation properties, which respectively indicate a right or left mutation of $C_P$.
\end{Exp}

\section*{Acknowledgments}\label{sec: acknowledgments}
\thanks{The author is grateful to Hiroyuki Nakaoka, Octavio Mendoza Hern\'andez and Alejandro Argud\'in Monroy, and Jan \v{S}t'ov\'i\v{c}ek for nurturing discussions about positive extensions in extriangulated categories, (co)products in extriangulated categories, and cosilting theory and dg categories, respectively.}

\printbibliography

\end{document}